\def\mod{\mathrm{mod}\ }
\newcommand{\BA}{{\mathbb {A}}} 
\newcommand{\BC}{{\mathbb {C}}}
 \newcommand{\BL}{{\mathbb {L}}}
 \newcommand{\BN}{{\mathbb {N}}}
\newcommand{\BQ}{{\mathbb {Q}}} \newcommand{\BR}{{\mathbb {R}}}
\newcommand{\BS}{{\mathbb {S}}}
 \newcommand{\BZ}{{\mathbb {Z}}}
\newcommand{\GL}{{\mathrm {GL}}} 
\newcommand{\SL}{{\mathrm {SL}}} 
 \newcommand{\Tr}{{\mathrm{Tr}}}
\newcommand{\RN}{{\mathrm {N}}}
\newcommand{\sstyle}{\scriptstyle}
\newcommand{\ra}{\rightarrow}
\def\fra{\mathfrak{a}}
\def\frb{\mathfrak{b}}
\def\frO{\text{$\text{\usefont{U}{BOONDOX-cal}{m}{n}O}$}\hskip 1pt}
\def\frOO{\text{$\text{\usefont{U}{BOONDOX-cal}{m}{n}O}$}}
\def\frp{\mathfrak{p}}
\def\-{^{-1}}
\def\udelta{\boldsymbol {\delta}}
\def\ulambda{\boldsymbol \lambda}
\def\umu{\boldsymbol \mu}
\def\ue{\boldsymbol e}
\def\um{\boldsymbol m}
\def\lp {\left (}
\def\rp {\right )}
\def\EC{\EuScript C}
\def\BCx{\BC^\times}
\def\Voronoi{Vorono\" \i \hskip 3 pt}
\renewcommand{\Re}{{\mathrm{Re}\,}}
\def\Fx{F^{\times}}
\def\vv{\varv}
\def\nwedge {\hskip - 2 pt \wedge \hskip - 2 pt }
\g@addto@macro\normalsize{\setlength\abovedisplayskip{3pt}}
\g@addto@macro\normalsize{\setlength\belowdisplayskip{3pt}}
\newcommand{\delete}[1]{}
\theoremstyle{plain}
\newtheorem{thm}{Theorem}[section] \newtheorem{cor}[thm]{Corollary}
\newtheorem{lem}[thm]{Lemma}  \newtheorem{prop}[thm]{Proposition}
\newtheorem {rem}[thm]{Remark}
\newtheorem*{acknowledgement}{Acknowledgements}
\numberwithin{equation}{section}
\begin{document}

	\title[Cancellation in the Additive Twists of Fourier Coefficients]{Cancellation in the Additive Twists of Fourier Coefficients for $\GL_2$ and $\GL_3$ over Number Fields}
	
	\author{Zhi Qi}
	\address{School of Mathematical Sciences\\ Zhejiang University\\Hangzhou, 310027\\China}
	\email{zhi.qi@zju.edu.cn}
	
	\subjclass[2010]{11F30}
	\keywords{Additive twists, the \Voronoi summation formula, Hankel transforms, Bessel kernels}

	\begin{abstract}
		In this article, we study the sum of additively twisted Fourier coefficients of an irreducible cuspidal automorphic representation of $\GL_2$ or $\GL_3$  over an arbitrary number field. When the representation is unramified at all non-archimedean places, we prove the Wilton type bound for $\GL_2$ and the Miller type bound for $\GL_3$ which are uniform in terms of the additive character.
	\end{abstract}
	
	\maketitle

    \section{Introduction}

    \subsection{Backgrounds}

    It is a classical problem to estimate sums involving the Fourier coefficients of a modular form. For example, when $f(z)$ is a holomorphic cusp form of weight $k$ for $\SL_2 (\BZ)$ with the Fourier expansion,
    $$f (z) = \sum_{n=1}^{\infty} A_f (n) n^{(k-1)/2} e (n z),$$
    with $e(z) = e^{2 \pi i z}$, it is well known that for any real $\theta$ and $T \geqslant 1$,
    \begin{align}\label{0eq: holomorphic form}
    \sum_{n \leqslant T} A_f (n) e (\theta n) \lll_f T^{1/2} \log (2 T),
    \end{align}
    with the implied constant depending only on $f$ (see \cite[Theorem 5.3]{Iwaniec-Topics}). This is a classical estimate due to Wilton. Because of the square-root cancellation, which is essentially best possible, \eqref{0eq: holomorphic form} shows that there is no correlation between the Fourier coefficients $A_f (n) $ and   additive characters $ e (\theta n) $. In the work  of Stephen D. Miller  \cite{Miller-Wilton},  for  the Fourier coefficients $A_f (q, n)$ of a cusp form $f$ for $\GL_3 (\BZ)$   he proved
    \begin{align}\label{0eq: GL3 estimate}
    \sum_{n  \leqslant T} A_f (q, n) e (\theta n) \lll_{\, \varepsilon, q, f} T^{3/4 + \varepsilon},
    \end{align}
    with  the implied constant depending only on $\varepsilon$, $q$ and the cusp form $f$. This is halfway between the trivial bound of $O(T)$ and the best-possible bound of $O \big(T^{1/2} \big)$. Uniformity in $\theta$ is an important feature of these estimates of Wilton and Miller.
    
    The estimate in \eqref{0eq: holomorphic form} can be seen in various ways, but the easiest is perhaps from the $\SL_2 (\BZ)$-invariance of $|f (z)| (\mathrm{Im }\, z)^{ k/2} $. Indeed, from the modularity we have $|f(z)| = O_f \lp (\mathrm{Im }\, z)^{- k/2} \rp$ for all $z = \theta + i/T$ in the upper half plane, and in particular for any real $\theta$
    \begin{align}\label{0eq: smooth, holomorphic}
    \sum_{n=1}^{\infty} A_f (n)  e (n \theta) n^{(k-1)/2} \exp (-2\pi n/T) \lll_f T^{k/2}.
    \end{align}
    This is in essence a smoothed form of \eqref{0eq: holomorphic form}. As observed by Miller, finding a matrix in $\SL_2 (\BZ)$ that lifts $ z = \theta + i/T $ to a fixed fundamental domain is a Diophantine problem, while the modularity can be encoded in a Vorono\"i-type summation formula. With these observations, Miller deduced his estimate in \eqref{0eq: GL3 estimate}  from   Diophantine approximations of $\theta$ and the $\GL_3$ \Voronoi summation formula in \cite{Miller-Schmid-2006} which involves the Fourier coefficients $A_f (q, n)$. 
    
    We now make some remarks on the estimates similar to \eqref{0eq: holomorphic form} for Maa\ss \hskip 2 pt cusp forms for $\GL_2$. When $f (z)$ is a Maa\ss \hskip 2pt cusp form with Laplacian eigenvalue $\lambda_f$,  the estimate in \eqref{0eq: holomorphic form} holds with a hybrid bound $ (\lambda_f T)^{1/2} \log (2 T) $ which is also uniform in $f$ (see \cite[Theorem 8.1]{Iw-Spectral} and its proof is similar to that of \cite[Theorem 5.3]{Iwaniec-Topics}). While Miller did not treat $\GL_2$ Maa\ss \hskip 2 pt forms in \cite{Miller-Wilton}, his ideas as described above was used later in \cite{Additive-Godber} to get an even better bound $ \lambda_f^{1/4 + \varepsilon} T ^{1/2 + \varepsilon} $.
    
     \vskip 5 pt

    The purpose of this paper is to obtain  estimates analogous to \eqref{0eq: holomorphic form} and \eqref{0eq: GL3 estimate}   for the  Fourier coefficients of a  cuspidal automorphic representation of $\GL_2$ or $\GL_3$ over an arbitrary number field. As an illustration, let $F$ be a totally real field of degree $N$ and consider a Hilbert modular form $f (z)$    of weight $ k $ for the Hilbert modular group $\SL_2 (\frO)$ with the Fourier expansion,
    \begin{align*}
    f (z) = \sum_{n \, \in \frOO^{\, \prime}}  A_f (n) \RN \big( n^{ (k-1)/2}\big) e (\Tr (n z)), \hskip 10 pt \RN \big( n^{ (k-1)/2}\big) = \prod_{j=1}^N n_j^{(k_j -1)/2}, \Tr (n z) = \sum_{j=1}^N n_j z_j.
    \end{align*}
    Here $\frO$ is the ring of integers in $F$ and $\frO'$ is its dual. Then the function $|f(z)| \RN  \big( \mathrm {Im}\, z ^{k/2} \big)$ 
    is $\SL_2 (\frO)$-invariant, which, by setting $z_j = \theta_j + i/T$, leads to an estimate analogous to \eqref{0eq: smooth, holomorphic}, 
    \begin{align}
    \sum_{n \, \in \frOO^{\, \prime} } A_f (n)  e (\Tr(n \theta)) \RN \big( n^{ (k-1)/2}\big) \exp (-2\pi \, \Tr (n) /T) \lll_f T^{\Tr (k)/2},
    \end{align} 
    for any $\theta \in \BR^N$. In order to generalize this further, we shall use the \Voronoi summation formula over number fields in the paper of Ichino and Templier  \cite{Ichino-Templier}. To estimate the Hankel transforms in the Vorono\"i summation, we shall apply the asymptotic formulae for Bessel kernels over both real and complex numbers in the author's recent work \cite{Qi-Bessel}.

    \subsection{Statement of Results}

     Let $F$ be a number field and let $\frO$ be the ring of integers and $\frO'$ be its dual lattice. Let $\mathfrak{D} $ denote the different ideal so that $ \frO' = \mathfrak{D}^{-1} $. Let $N$ be the degree of $F$. Let $S_{\infty}$ be the set of all archimedean places of $F$. For $\vv \in S_{\infty}$, let $F_{\vv}$ be the corresponding local field. 
    Define $F_{\infty} = \prod_{ \vv\, \in S_{\infty}} F_{\vv} = \BR^N$ and the trace on $F_{\infty}$ by $\Tr \, x = \sum_{ \vv\, \in S_{\infty}} \Tr_{F_{\vv}/ \BR} \, x_{\vv}$.  Let $\varPi'  \subset F_{\infty}$ be a fixed fundamental parallelotope for  $F_{\infty} / \frO'$ which is symmetric about zero.
    
    Let $\pi$ be   an irreducible cuspidal automorphic representation of $\GL_r$ over $F$, with $r = 2, 3$. Suppose that $\pi$ is unramified  and has trivial central character at all non-archimedean places of $F$.  For  nonzero ideals $\fra \subset \frO  $, let $A_{\pi} (\fra)$ denote the Fourier coefficients of $\pi$ (when $r = 3$,  in order to unify the presentation, we let $A_{\pi} (\fra)$ be the shorthand notation for $A_{\pi} (\mathfrak{D}, \fra)$). 
    
    In this paper, we study the sums of Fourier coefficients twisted by additive characters
\begin{align}\label{0eq: S theta (T)}
S_{\theta} (T) = \sum_{\sstyle \gamma\, \in \frOO \hskip 0.5 pt ' \smallsetminus \{0\} \cap T \cdot \varPi' } A_{\pi} (\gamma \mathfrak{D}) e \lp \Tr (\theta \gamma) \rp , \hskip 10 pt \theta \in F_{\infty} ,
\end{align}
and we shall be concerned with obtaining estimates for $S_{\theta} (T) $ which are uniform in $\theta$.

Thanks to the Rankin-Selberg theory (\cite{J-S-Rankin-Selberg}), we know that $A_{\pi}(\gamma)$  obey the Ramanujan conjecture on average. More precisely, we have 
\begin{align}\label{0eq: 2nd moment}
\sum_{\sstyle \gamma\, \in \frOO \hskip 0.5 pt ' \smallsetminus \{0\}  \cap T \cdot \varPi'  } |A_{\pi} (\gamma \mathfrak{D})|^2  
= O \big( T^{N+\varepsilon} \big),
\end{align}
(the $\varepsilon$ in the exponent is due to the possible infinitude of the group of units in $\frO$ and $T^{\varepsilon}$ may be reduced to a power of $\log T$.), and by Cauchy-Schwarz,
\begin{align}
\sum_{\sstyle \gamma\, \in \frOO \hskip 0.5 pt ' \smallsetminus \{0\}  \cap T \cdot \varPi'  } |A_{\pi} (\gamma \mathfrak{D})|   
= O \big( T^{N+\varepsilon} \big).
\end{align}
See \S \ref{sec: average}. Thus the trivial estimate for $S_{\theta} (T)$  is $O \lp T^{N + \varepsilon} \rp$, obtained by taking the absolute value of each term in  \eqref{0eq: S theta (T)}.  On the other hand, by Parseval's identity,
\begin{align*}
\int_0^1 |S_{\theta} (T) |^2 d \theta = \sum_{\sstyle \gamma\, \in \frOO \hskip 0.5 pt ' \smallsetminus \{0\}  \cap T \cdot \varPi'  } |A_{\pi} (\gamma \mathfrak{D})|^2,
\end{align*}
Hence, in view of \eqref{0eq: 2nd moment}, the   best possible uniform estimate for $ S_{\theta} (T)  $ would be $O \lp T^{N/2 + \varepsilon} \rp$.
Our main results are the following uniform estimates for $S_{\theta} (T) $ analogous to \eqref{0eq: holomorphic form} and \eqref{0eq: GL3 estimate}. In the $\GL_2$ case, we have the best-possible bound of Wilton type, whereas, in the $\GL_3$ case, we have Miller's bound, which is halfway between the trivial bound and the best-possible bound.

\begin{thm}\label{thm: main}
	Let notations be as above.
	Suppose that $T > 0$ is sufficiently large in terms of $\pi$ and $F$.  
	Then, for any $\theta \in F_{\infty}$, we have
	\begin{align}\label{0eq: sum, r= 2}
	\sum_{\sstyle \gamma\, \in \frOO \hskip 0.5 pt ' \smallsetminus \{0\}  \cap T \cdot \varPi'  } 
	A_{\pi} (\gamma \mathfrak{D}) e \lp \Tr (\theta \gamma) \rp  
	= O \big( T^{N/2} (\log T)^N  \big),
	\end{align}
	if $r = 2$, and for any $\varepsilon > 0$,
	\begin{align}\label{0eq: sum, r= 3}
	\sum_{\sstyle \gamma\, \in \frOO \hskip 0.5 pt ' \smallsetminus \{0\}  \cap T \cdot \varPi'   } 
	A_{\pi} (\mathfrak{D}, \gamma \mathfrak{D}) e \lp \Tr (\theta \gamma) \rp   
	= O_{\varepsilon} \big(  T^{3 N /4 + \varepsilon} \big),
	\end{align}
	if $r = 3$, where the implied constants in {\rm (\ref{0eq: sum, r= 2}, \ref{0eq: sum, r= 3})} depend only on  $\pi$, $F$, the choice of $\varPi$, and in addition on $\varepsilon$ for \eqref{0eq: sum, r= 3}.
\end{thm}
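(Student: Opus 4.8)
\smallskip
\noindent\textbf{Plan of proof.} I would follow the Wilton--Miller strategy recalled above, transplanted to the number field $F$: combine a Diophantine approximation of $\theta$ with the $\GL_r$ \Voronoi summation formula of Ichino--Templier \cite{Ichino-Templier}, and control the resulting Hankel transforms by means of the archimedean Bessel-kernel asymptotics of \cite{Qi-Thesis}. The first move is to pass from the sharp cutoff $T\cdot\varPi$ to a smooth one: by a standard smoothing and partial-summation argument --- using the refined on-average Ramanujan estimate of \S\ref{sec: average}, in which the factor $T^\varepsilon$ is replaced by a power of $\log T$ coming from a careful count of lattice points and of units in $\frO$ --- it suffices to bound, uniformly in $\theta$, the smooth sum $\widetilde S_\theta(T)=\sum_{\gamma\in\frO\smallsetminus\{0\}}A_\pi(\gamma)\,e\lp\Tr(\theta\gamma)\rp w(\gamma/T)$ for a fixed $w\in C^\infty_c(F_\infty)$ adapted to $\varPi$; this reduction costs a factor $(\log T)^N$ when $r=2$ and is harmlessly absorbed into $T^\varepsilon$ when $r=3$.

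For a parameter $Q=Q(T)$ to be chosen at the end, I would invoke a simultaneous Diophantine approximation over $F$: there exist $a,c\in\frO$ with $c\neq0$ and $|\RN(c)|\leqslant Q$ such that $\theta=a/c+\delta$ with $|\delta_\vv|\ll\lp|c_\vv|\,Q^{1/N}\rp^{-1}$ for every $\vv\in S_\infty$. Writing $e\lp\Tr(\theta\gamma)\rp=e\lp\Tr((a/c)\gamma)\rp\,e\lp\Tr(\delta\gamma)\rp$ and absorbing the residual phase into the weight turns $\widetilde S_\theta(T)$ into an additive twist by $a/c$ of the smooth function $W(x)=e\lp\Tr(\delta x)\rp\,w(x/T)$, whose $\vv$-th partial derivatives are of size $\lp1+|\delta_\vv|T\rp^{|\alpha_\vv|}$ on its support. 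I would then apply the \Voronoi formula for $\GL_r$; since $\pi$ is everywhere unramified with trivial central character, it has no polar term (cuspidality), and, schematically,
\begin{align*}
\sum_{\gamma}A_\pi(\gamma)\,e\lp\Tr((a/c)\gamma)\rp W(\gamma)\;=\;\sum_{\pm}\ \sum_{\gamma'\neq0}A_\pi(\gamma')\,\mathrm{KL}_\pm(\gamma';a,c)\,\mathcal{W}_\pm(\gamma'),
\end{align*}
where $\mathrm{KL}_\pm$ is a normalized exponential sum attached to the modulus $c$ --- an additive character when $r=2$, a classical Kloosterman sum when $r=3$ (so $\ll_\varepsilon|\RN(c)|^{1/2+\varepsilon}$ by Weil, or $\ll|\RN(c)|$ trivially) --- and $\mathcal W_\pm=\prod_{\vv\in S_\infty}\mathcal W_{\pm,\vv}$ is a normalized multiple of the archimedean Hankel transform of $W$ attached to $\pi_\infty$, evaluated at an argument whose norm is $\asymp|\RN(\gamma')|/|\RN(c)|^r$. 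Everything is thereby reduced to estimating $\mathcal W_\pm$.

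This is the technical heart of the argument, and the step I expect to be the main obstacle. I would insert the asymptotic expansions of the Bessel kernels $J_{\pi_\vv}$ over $\BR$ and over $\BC$ from \cite{Qi-Thesis} --- in its oscillatory range each $J_{\pi_\vv}(y)$ is a bounded combination of terms $e\lp\pm r\,|y|^{1/r}\rp$ with smooth amplitudes of size $\asymp|y|^{(1-r)/(2r)}$, plus a rapidly smaller remainder, while near the origin the relevant transform is governed by a short transition range --- and carry out stationary-phase analysis on each factor $\mathcal W_{\pm,\vv}$. The total phase weighs the linear term $2\pi\Tr(\delta_\vv x_\vv)$, of size $\leqslant|\delta_\vv|T$ on the support, against the Bessel phase $\pm r\bigl(|\RN(\gamma')|\,|\RN(x)|/|\RN(c)|^{r}\bigr)^{1/r}$; its stationary point localizes the dual variable $\gamma'$ to a box $B=B(T,c,\delta)$ whose side-lengths are governed place by place by the individual $|\delta_\vv|$, off which $\mathcal W_\pm$ decays rapidly and on which it is bounded by a fixed power of $T$ times $|\RN(\gamma')|^{-(r-1)/(2r)}$. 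Making this localization, and the pointwise size bound, completely uniform in $\theta$ --- especially at the complex places, where $J_{\pi_\vv}$ lives on $\BC^\times$ and the integral defining $\mathcal W_{\pm,\vv}$ is genuinely two-dimensional --- is precisely the input provided by \cite{Qi-Thesis}, and is where the real work lies.

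Finally I would assemble the estimate: bound $\sum_{\gamma'\in B}|A_\pi(\gamma')|$ by Cauchy--Schwarz together with the second-moment bound of \S\ref{sec: average}, multiply by the bound for $\mathrm{KL}_\pm$ and the size of $\mathcal W_\pm$ on $B$, add the negligible contribution of the rapidly-decaying range of $J_{\pi_\vv}$, and feed in the Diophantine constraints $|\RN(c)|\leqslant Q$ and $|\delta_\vv|\ll\lp|c_\vv|\,Q^{1/N}\rp^{-1}$. Choosing $Q$ to balance the resulting bound then yields $\widetilde S_\theta(T)\ll T^{N/2}(\log T)^{N}$ when $r=2$ and $\widetilde S_\theta(T)\ll_\varepsilon T^{3N/4+\varepsilon}$ when $r=3$, uniformly in $\theta$; combined with the first step this is Theorem~\ref{thm: main}. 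As in Miller's one-variable argument, the exponent $3/4$ for $\GL_3$ --- halfway between the trivial exponent $1$ and the optimal exponent $1/2$ --- is forced by the cube-root, rather than square-root, oscillation of the $\GL_3$ Bessel kernel together with the larger Kloosterman modulus, while the power $(\log T)^N$ for $\GL_2$ comes from the $\log$-power form of the second-moment estimate, one factor per archimedean place.
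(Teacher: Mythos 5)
Your plan for the smoothed sum --- Diophantine approximation over $F$, the Ichino--Templier \Voronoi formula, Weil's bound, Bessel-kernel asymptotics with stationary phase, and the Rankin--Selberg average --- is exactly the paper's route to Theorem \ref{thm: main, smooth}, and with $Q=\sqrt T$ it yields $O(T^{N/2})$ (with \emph{no} logarithm) for $r=2$ and $O_\varepsilon(T^{3N/4+\varepsilon})$ for $r=3$, uniformly in $\theta$. The genuine gap is in your very first step, which is the step that actually constitutes the paper's proof of Theorem \ref{thm: main}: the passage from the sharp cutoff $T\cdot\varPi$ to a smooth weight. A ``standard smoothing and partial-summation argument'' resting on the on-average Ramanujan bound does not achieve this. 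If you replace $\mathbf 1_{T\cdot\varPi}$ by a smooth $w$ transitioning in a layer of relative width $\delta$, the boundary contribution is, even after Cauchy--Schwarz and the second-moment bound, of size about $\delta T^{N}$ in the worst case; to push this below $T^{N/2}$ forces $\delta\leqslant T^{-N/2}$, and a weight varying on that scale violates the derivative conditions (\ref{0eq: weight, real})--(\ref{0eq: weight, complex, 2}) on which the entire Hankel-transform analysis rests. The paper instead unsmooths Fourier-analytically, following \cite[\S 8]{Additive-LiY}: after a dyadic decomposition into annuli $M\cdot\varPi\smallsetminus(M/2)\varPi$ on which a fixed admissible $w$ is identically $1$, Lemma \ref{8lem: hX} supplies $h_X$ with $\int_{F_\infty}|h_X|\lll(\log X)^N$ and $\int_{F_\infty}h_X(x)e(\Tr(x\gamma))\,dx=1$ or $0$ according as $\gamma\in X\cdot\varPi$ or not, so the sharp sum becomes $\int_{F_\infty}h_X(x)\,S_{\theta+x,\,w}\,dx\leqslant\|h_X\|_{1}\sup_{x}|S_{\theta+x,\,w}|$. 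This is precisely where the $\theta$-uniformity of Theorem \ref{thm: main, smooth} is consumed, and it is the \emph{only} source of the factor $(\log T)^N$ in \eqref{0eq: sum, r= 2} --- one Dirichlet-kernel logarithm per real coordinate. Your closing attribution of that factor to ``the $\log$-power form of the second-moment estimate'' is therefore incorrect: the unit-group logarithms from \S\ref{sec: average} are already absorbed into the clean bound $O(T^{N/2})$ of the smoothed theorem.
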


We shall deduce Theorem \ref{thm: main} from its smoothed version as follows.

\begin{thm}\label{thm: main, smooth}
	Let notations be as above. Let $ \varDelta > 1$.
	Let $T > 0$ be sufficiently large in terms of $\pi$ and $F$. For each archimedean place $\vv $, let $w_{\vv}$ be a smooth function on $F_{\vv}$ such that 
	\begin{itemize}
		\item [-] for $F_{\vv} = \BR$, $w_{\vv} (x)$ is supported on $\{ x \in \BR : |x| \in [T, \varDelta T] \}$  and
		\begin{align}\label{0eq: weight, real}
		(d/dx)^j w_{\vv} (x) \lll_{j, \, \varDelta} T^{- j},
		\end{align} 
		\item [-] for $F_{\vv} = \BC$,  $w_{\vv} (z)$ is   supported on $\{ z \in \BC : |z| \in [T, \varDelta T] \}$ and  
		\begin{align}\label{0eq: weight, complex}
		(\partial/\partial z)^{j} (\partial/\partial \overline z)^{j'} w_{\vv} (z) \lll_{j,   j',  \, \varDelta} T^{ - j - j' },
		\end{align}
		or equivalently, in the polar coordinates,
		\begin{align}\label{0eq: weight, complex, 2}
		(\partial/\partial x)^{j} (\partial/\partial \phi)^{k} w_{\vv} \lp x e^{i \phi} \rp \lll_{j, \,  k,  \, \varDelta} T^{ - j  }.
		\end{align}
	\end{itemize}
	Let $w$ be the product function of $w_{\vv}$ on $F_{\infty}$. 
	Then, for any  $\theta \in F_{\infty}$, we have
	\begin{align}\label{0eq: weighted sum, r= 2}
	\sum_{ \gamma \, \in \frOO \hskip 0.5 pt '  \smallsetminus \{0\}    } A_{\pi} (\gamma \mathfrak{D}) e \lp \Tr (\theta \gamma) \rp w (\gamma) = O \big(  T^{N/2} \big),
	\end{align}
	if $r = 2$, and for any $\varepsilon > 0$,
	\begin{align}\label{0eq: weighted sum, r= 3}
	\sum_{ \gamma \, \in \frOO \hskip 0.5 pt '  \smallsetminus \{0\}   } A_{\pi} (\mathfrak{D}, \gamma \mathfrak{D}) e \lp \Tr (\theta \gamma) \rp w (\gamma) = O_{\varepsilon} \big(  T^{3 N /4 + \varepsilon} \big),
	\end{align}
	if $r = 3$, where the implied constants in {\rm (\ref{0eq: weighted sum, r= 2}, \ref{0eq: weighted sum, r= 3})} depend only on  $\pi$, $F$, $\varDelta$ and those  in {\rm(\ref{0eq: weight, real}, \ref{0eq: weight, complex}, \ref{0eq: weight, complex, 2})}, and in addition on $\varepsilon$ for \eqref{0eq: weighted sum, r= 3}.
\end{thm}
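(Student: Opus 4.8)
The plan is to follow Miller's strategy: approximate $\theta$ by a fraction $a/c$ via Dirichlet's theorem, encode the resulting near-automorphy through the $\GL_r$ \Voronoi summation formula of \cite{Ichino-Templier}, and estimate the Hankel transforms that appear on the dual side by stationary phase, using the asymptotic expansions of the archimedean Bessel kernels from \cite{Qi-Thesis}.

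First I would set up the Diophantine input. Fix a parameter $Q > 1$, to be taken a suitable power of $T$, and apply the pigeonhole principle to the points $c\theta \bmod \frO'$, $1 \leqslant c \leqslant Q$, in the compact torus $F_\infty/\frO'$: this produces $c \in \{1, \dots, Q\}$ and $a \in \frO'$ with $|\beta_\vv| \lll Q^{-1/N}/c$ for all archimedean $\vv$, where $\beta := \theta - a/c$; after removing a common factor the additive character $\psi \colon \gamma \mapsto e(\Tr (a\gamma/c))$ on $\frO$ has conductor exactly $c\frO$. Writing $e(\Tr (\theta\gamma)) = \psi(\gamma)\, e(\Tr (\beta\gamma))$ and absorbing the second factor into the weight, the sum becomes $\sum_{\gamma} A_\pi (\gamma)\, \psi(\gamma)\, \widetilde w(\gamma)$, where $\widetilde w := e(\Tr (\beta\,\cdot\,))\, w$ is supported on the same box $\prod_\vv \{T \leqslant |x_\vv| \leqslant \varDelta T\}$ but satisfies the bounds \eqref{0eq: weight, real}, \eqref{0eq: weight, complex}, \eqref{0eq: weight, complex, 2} with $T^{-j}$ replaced by $(|\beta_\vv| + T^{-1})^{j}$ — so it oscillates at frequency $|\beta_\vv|$ in the $\vv$-th variable.

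The heart of the proof is then the \Voronoi formula and the ensuing archimedean analysis. Applying \cite{Ichino-Templier} to $\psi$: since $\pi$ is cuspidal there is no polar term, and the dual side is a sum over $\widetilde\gamma \in \frO \smallsetminus \{0\}$ (together with, when $r = 3$, an auxiliary divisor sum) of $A_{\widetilde\pi}(\widetilde\gamma)$ — note $\widetilde\pi$ is again cuspidal, so the bounds of \S\ref{sec: average} apply to it — weighted by (hyper-)Kloosterman sums whose moduli divide $c$ and by a Hankel transform $\prod_{\vv} \mathcal{J}_\vv \widetilde w_\vv (\widetilde\gamma_\vv / c^{r})$, divided by a fixed power of $c$. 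One then estimates each local integral $\mathcal{J}_\vv \widetilde w_\vv (y) = \int_{F_\vv} \widetilde w_\vv (x)\, \mathcal{J}_\vv (xy)\, dx$ by stationary/non-stationary phase, using the asymptotics of \cite{Qi-Thesis}: for $|u| \gg 1$ one has $\mathcal{J}_\vv (u) = \sum_{\pm} |u|^{-(r-1)/2r}\, e\big({\pm}\, r\, |u|^{1/r}\big)\, W^{\pm}_\vv (u)$ with $W^{\pm}_\vv$ of symbol type — when $F_\vv = \BC$ the phase depends on $\arg u$ as well and the integral over $F_\vv=\BC$ is two-dimensional, the angular part being absorbed by \eqref{0eq: weight, complex, 2} and a one-dimensional stationary phase being carried out radially — while $\mathcal{J}_\vv (u) \lll_{\pi_\vv} 1 + |\log|u||$ for $|u| \lll 1$. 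Repeated integration by parts against the slowly varying $W^{\pm}_\vv$ shows that $\mathcal{J}_\vv \widetilde w_\vv(\widetilde\gamma_\vv/c^{r})$ is negligible unless either $|\widetilde\gamma_\vv| \lll c^{r}/T$, on which range $|\mathcal{J}_\vv \widetilde w_\vv| \lll T$ (and which contributes only if $|\beta_\vv| \lll 1/T$), or $|\widetilde\gamma_\vv| \asymp c^{r} |\beta_\vv|^{r} T^{r-1}$, which contributes only if $|\beta_\vv| \gg 1/T$ and on which $|\mathcal{J}_\vv \widetilde w_\vv| \asymp |\beta_\vv|^{-r/2} T^{(2-r)/2}$, together with a transitional band near $|\widetilde\gamma_\vv|\asymp c^r/T$; moreover for $r = 2$ the phase of $\mathcal{J}_\vv\widetilde w_\vv$ on the second range is linear in $\widetilde\gamma_\vv$.

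It remains to assemble the bound. With $Q$ an appropriate power of $T$, the dual sum is confined to $\widetilde\gamma$ lying in boxes of the above shapes, and because $\varDelta$ is a fixed constant each such box has bounded aspect ratio, hence meets $O_\varDelta(1)$ associates of a given ideal, so the Rankin--Selberg estimate of \S\ref{sec: average} gives $\sum_{\widetilde\gamma \in \text{box}} |A_{\widetilde\pi}(\widetilde\gamma)| \lll_\varDelta (\text{volume of the box})$ with no logarithmic loss. Feeding this in, together with $\prod_\vv |\beta_\vv| \lll 1/(c^{N} Q)$ and the trivial bounds for the Kloosterman sums (absolute value $1$ when $r = 2$; $\lll_\varepsilon (\text{modulus})^{1+\varepsilon}$ when $r = 3$) — the ``low'' and transitional ranges being treated by the same counting, or seen to be empty when $c^{r}/T \lll 1$ since every nonzero $\widetilde\gamma \in \frO$ has $\prod_\vv |\widetilde\gamma_\vv| \geqslant 1$ — bounds the dual sum by $O\big(T^{N/2}\big)$ for $r = 2$ and, after balancing $Q$ and absorbing an $M^{\varepsilon}$ from the $\GL_3$ divisor sum, by $O_\varepsilon\big(T^{3N/4 + \varepsilon}\big)$ for $r = 3$; keeping the $r=2$ bound free of logarithms may additionally require running the whole argument as an induction on the scale $T$, which closes because each box scale is strictly below $T$. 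I expect the main obstacle to be the archimedean analysis at the complex places — the two-dimensional oscillatory integrals against the $\GL_2(\BC)$ and $\GL_3(\BC)$ Bessel kernels, where the radial stationary phase must be separated from the angular oscillation and controlled uniformly across the transitional regimes — everything else being Diophantine bookkeeping and counting. (The passage from Theorem \ref{thm: main, smooth} back to Theorem \ref{thm: main} is then a dyadic decomposition of $T\cdot\varPi$ into $O\big((\log T)^{N}\big)$ such boxes, accounting for the logarithmic factors there.)
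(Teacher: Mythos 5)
Your proposal follows essentially the same route as the paper's proof: Dirichlet approximation of $\theta$, absorption of the residual phase into the weight, the Ichino--Templier \Voronoi formula, stationary phase applied to the Bessel asymptotics of \cite{Qi-Thesis}, and Rankin--Selberg averages on the dual side. Two of your stated ingredients, however, are not correct as written, and both sit in load-bearing positions.

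First, the bound $\mathcal J_{\vv}(u)\lll 1+|\log |u||$ for $|u|\lll 1$ fails in general. Unitarity only gives $|\Re \mu_l|<\tfrac 12$, and for a non-tempered principal series the Mellin--Barnes contour can be pushed no further right than $\Re s=\tfrac 12$; the correct uniform bound near zero is $\mathcal J_{\vv}(u)\lll \|u\|_{\vv}^{-1/2}$ (Lemma \ref{lem: bound, at zero}). Hence on the low range $|\widetilde\gamma_{\vv}|\lll c^r/T$ the Hankel transform is not $O(T)$ per place but $O\big(\|T/y\|^{1/2}\big)$, which blows up as $\widetilde\gamma\to 0$, and this range must be summed against $|\RN\widetilde\gamma|^{-1/2}$ using a partial-summation form of the Rankin--Selberg bound (as in \S\ref{sec: average}), not by a plain volume count. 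Second, your $O_{\varDelta}(1)$ count of associates per ideal is valid only for the annular bulk boxes; the low range is a product of discs, where the number of units carrying a fixed $\gamma$ into the region is $O\big((\log)^{r_1+r_2-1}\big)$ by Dirichlet's unit theorem (Lemma \ref{lem: Unit}). For $r=2$ one must then check that these logarithms do not spoil the clean $T^{N/2}$; the paper does this by observing that the norm-volume of the low range is $\lll |\RN\breve\beta|^{-2}\lll 1$, so the logarithm is bounded --- no induction on scales is needed. The remaining imprecision (the exact choice of $Q$, the class-number bookkeeping for the fraction, the transitional band) is of the kind the paper also has to resolve but does not affect the method.
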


\subsection{Remarks} \

In Miller's paper \cite{Miller-Wilton}, Hankel transforms in the Mellin-Barnes   form are analyzed using  Stirling's asymptotic formula for the Gamma function. This approach is very restrictive from the analytic aspect in the complex case, as it can only be applied to spherical weight functions. Nevertheless, the spherical assumption is usually sufficient, for example, for deducing Theorem \ref{thm: main} from \ref{thm: main, smooth}. In contrast, we use the asymptotic formulae for Bessel kernels, which turn Hankel transforms into oscillatory integrals so that we may directly use the method of stationary phase. Our method does not only yield better estimates for Hankel transforms but also enables us to treat complex Hankel transforms of the large class of compactly supported weight functions described in Theorem \ref{thm: main, smooth}.  See \S \ref{sec: asymptotic} for more details.

In  this paper, same as \cite{Miller-Wilton}, the cuspidal automorphic representation $\pi$ is considered fixed.  There however have been papers after \cite{Miller-Wilton},   for example \cite{Additive-LiY, Additive-Godber, Additive-Li}, on obtaining estimates which are also uniform in the archimedean parameters of $\pi$. These bounds are useful when one considers  varying automorphic forms, for example, in the subconvexity problem (see \cite{Ho-Mu-Qi}). With Ichino and Templier's \Voronoi summation formula, the author believes that they may also be generalized to an arbitrary number field.  When the parameters are taken into account, one has to use the Mellin-Barnes form of Hankel transforms and Stirling's asymptotic formula, or one needs to get an asymptotic formula for Bessel kernels at the transition range, which is currently only known for $\GL_2(\BR)$.

In higher rank $r \geqslant 4$, with more efforts, Ichino and Templier's \Voronoi summation formula for $\GL_r$ may   be reformulated in the classical language as in \S \ref{sec: Voronoi}, with hyper-Kloosterman sums on the right hand side of the identity. Also, the analysis of Hankel transforms in \S \ref{sec: asymptotic} may   be generalized for $r \geqslant 4$.  However,  the application of the $\GL_r$ \Voronoi summation formula with Weil's bound on  each individual hyper-Kloosterman sum fails to  yield nontrivial bound when $r \geqslant 4$.  

Finally, we remark that in our theorems the sums of Fourier coefficients are restricted to the ideal class containing $ \mathfrak{D}$. Our results should be generalized by using Theorem 3 of Ichino and Templier \cite{Ichino-Templier} (instead of their Theorem 1) along with the Hecke relation.

\begin{acknowledgement}
	The author would like to thank  Stephen D. Miller for valuable comments and helpful discussions. The author wishes to thank the   referee for thorough reading of the manuscript and several suggestions which helped improving the exposition.
\end{acknowledgement}

    \section{Notations for Number Fields}
    

We now give a list of our most frequently used notations from algebraic number theory. See \cite{Lang-ANT} for more details.
    
Let $F$ be a number field of degree $N$. Let $\frO$ be its ring of integers and $\frOO^{\times}$ the group of units. Let $\frO '$ be the dual of $\frO$.  Let $\mathfrak{D}$ be the different ideal of $F$, namely, $\mathfrak{D}^{-1} = \frO '$.
Let $\mathrm{N}$ and $\Tr$ denote the norm and the trace for $F$, respectively. Denote by $ \BA = \BA_F$ the  adele ring of $F$.

For any place $\vv$ of $F$, we denote by $F_{\vv}$ the corresponding local field. 
When $\vv$ is non-archimedean,  let $\frp_{\vv}$ be the corresponding prime ideal of   $\frO $, let $\mathrm{ord}_{\vv}$ denote the additive valuation.
Let  $N_{\vv}$ be the local degree of $F_{\vv}$; in particular, $N_{\vv} = 1$ if $F_{\vv} = \BR$ and  $N_{\vv} = 2$ if $F_{\vv} = \BC$.  Let $\|\, \|_{\vv}$  denote the normalized module of $F_{\vv}$. We have $\|\,\|_{\vv} = |\ |$ if $F_{\vv} = \BR$ and  $\|\,\|_{\vv} = |\ |^2$ if $F_{\vv} = \BC$, where $|\ |$ is the usual absolute value.

Fix the (non-trivial) standard additive character $\psi = \otimes_{\vv} \psi_{\vv}$ on $\BA/F$ as in \cite[\S XIV.1]{Lang-ANT} such that $\psi_{\vv} (x) = e (- x)$ if $F_\vv = \BR$,   $\psi_{\vv} (z) = e (- (z + \overline z))$ if $F_\vv = \BC$, 
and that $\psi_{\vv}$ has conductor $\mathfrak{D}_{\vv}\-$ for any non-archimedean $F_\vv$ ($\mathfrak{D}_{\varv}$ is the local different ideal). Let $S_{\infty}$, respectively $S_f$, denote the set of all archimedean, respectively non-archimedean,  places of $F$. Accordingly, we split 
$\psi = \psi_f \psi_{\infty} $.
Note that 
$\psi_{\infty} (\gamma) = e (- \Tr \, \gamma)$ and hence $\psi_{f} (\gamma) = e (  \Tr \, \gamma)$ for $\gamma \in F$. 

We choose the Haar measure of $F_{\vv}$ as in  \cite[\S XIV.1]{Lang-ANT}; the Haar measure is the ordinary Lebesgue measure on the real line if $F_{\vv} = \BR$ and twice  the ordinary Lebesgue measure on the complex plane if $F_{\vv} = \BC$. 

For each $\vv \in S_{\infty}$, let $\sigma_{\vv}$ denote the embedding $F \hookrightarrow F_{\vv}$, 
then all the  $\sigma_{\vv}$ yield an embedding $\sigma : F \hookrightarrow F_{ {\infty}} = \prod _{\vv \, \in S_{\infty}} F_{\vv} = \BR^N$.  
Let $r_1$  be the number of real and $r_2$ be the number of conjugate pairs of complex embeddings of $F$. 


	\section{\texorpdfstring{\Voronoi Summation Formulae for $\GL_2$ and $\GL_3$ over Number Fields}{Voronoi Summation Formulae for $\GL_2$ and $\GL_3$ over Number Fields}}\label{sec: Voronoi}
	
	Our main tool is the $\GL_2$ and $\GL_3$ \Voronoi summation formulae over number fields in the work of Ichino and Templier  \cite{Ichino-Templier}. It is formulated in an adelic framework. We shall give a brief summary of the adelic formulae and then translate them into the  classical language.

	\subsection{Adelic \Voronoi Summation Formulae  \cite{Ichino-Templier}} \label{sec: adelic Voronoi}
	We shall follow the notations  in \cite{Ichino-Templier}, and for further details readers can refer to \cite{Ichino-Templier} and the works cited there. 
	
	Let $\pi = \otimes_{\varv} \pi_{\varv}$ be   an irreducible cuspidal automorphic representation of $\GL_r(\BA)$, with $r = 2, 3$.  Let $\widetilde {\pi} = \otimes_{\varv} \widetilde \pi_{\varv}$ be the contragradient  representation of $\pi$.
	
	Let $S$ be a finite set of places of $F$ including the ramified places of $\pi$. 
	Denote by $\BA^S$ the subring of adeles
	with trivial component above $S$. Denote by $W^S_{\text{o}} = \prod_{\varv \, \notin S} W_{\text{o}  \, \varv}$ the unramified Whittaker
	function of $\pi^S = \otimes_{\varv \, \notin S} \pi_{\varv}$ above the complement of $S$. Let  $\widetilde W^S_{\mathrm{o}} = \prod_{\varv \, \notin S} \widetilde W_{\mathrm{o}  \, \varv}$ be the unramified Whittaker function of   $\widetilde \pi^S = \otimes_{\varv \, \notin S} \widetilde \pi_{\varv}$. 
	\delete{ \begin{align*}
	\varw_2 = \begin{pmatrix}
	 &  1 \\
	1 &
	\end{pmatrix}, \hskip 10 pt 
	\varw_3 = \begin{pmatrix}
	& &  1 \\
	& 1 & \\
	1 & & 
	\end{pmatrix}.
	\end{align*}
}
	    
	    For any place $\varv$ of $F$, to a smooth compactly supported function $w_{\varv} \in C_c^{\infty} (F_{\varv}^{\times})$ is associated a dual function $\widetilde w_{\varv}$  of $ w_{\varv} $ such that
	    \begin{equation}\label{2eq: Ichino-Templier}
	    \begin{split}
	    \int_{ F_{\varv}^\times} \widetilde w_{\varv} (x) \chiup (x)\- & \|x\|_{\varv}^{s - \frac {r-1} 2} d^\times x \\
	    & = \chiup (-1)^{r-1} \gamma (1-s, \pi_{\varv} \otimes \chiup, \psi_{\varv} ) \int_{ F_{\varv}^\times} w_{\varv} (x) \chiup (x) \|x\|_{\varv}^{1 - s - \frac {r-1} 2} d^\times x,
	    \end{split}
	    \end{equation}
	    for all $s$ of real part sufficiently large and all unitary multiplicative characters $\chiup $ of $ F_{\varv}^\times$.
	    The equality \eqref{2eq: Ichino-Templier} 
	    defines  $\widetilde w_{\varv}$ uniquely in terms of $\pi_{\varv}$, $\psi_{\varv}$ and $w_{\varv}$. For $S$ as above, we put $
	    w_S = \prod_{ \varv \, \in S }  w_{\varv}  $, $ \widetilde w_S = \prod_{ \varv \, \in S } \widetilde w_{\varv}  $.
	    
	    Let $\varv$ be an unramified place of $\pi$. It should be kept in mind that, since the additive character $\psi_{ \vv }$ has conductor $\mathfrak{D}_{\vv}\-$,  $W_{\mathrm{o}\, \vv} \begin{pmatrix}
	    \gamma & \\
	    & 1 
	    \end{pmatrix}$ or $W_{\mathrm{o}\, \vv} \begin{pmatrix}
	    \gamma_1 \gamma_2 & & \\
	    & \gamma_1 & \\
	    & & 1 
	    \end{pmatrix}$ vanishes unless $ \gamma \in \mathfrak{D}_{\vv}\-$ or both $\gamma_1, \gamma_2 \in \mathfrak{D}_{\vv}\-$. Let $ \delta_{ \vv}  $ be a generator of $\mathfrak{D}_{\vv}$. For $\gamma, \zeta \in \Fx_{\varv}$, we define
	    $K_{\varv} (\gamma, \zeta, \widetilde W_{\mathrm{o} \, \varv}) $ as follows. When $r = 2$,  we 
	    let
	    \begin{equation}\label{2eq: K, r=2}
	    K_{\varv} (\gamma, \zeta, \widetilde W_{\mathrm{o} \, \varv}) = 
	    \widetilde W_{\mathrm{o} \, \varv} \begin{pmatrix}
	      \gamma \zeta\- & \\
	    & \zeta
	    \end{pmatrix} \psi_{\varv} \lp \gamma \zeta\- \rp.
	    \end{equation}
	    When $r = 3$, 
	    we let
	    \begin{equation}\label{2eq: K, r=3}
	    \begin{split}
	    K_{\varv} (\gamma, \zeta, \widetilde W_{\mathrm{o} \, \varv}) =   \|\zeta\|_\vv   \sum_{\sstyle \gamma' \, \in \Fx_\varv / \frOO_{\,\varv}^{\times} \atop { \sstyle  1 \leqslant \|\gamma'\|_{\varv} \leqslant \| \delta_{ \vv} \- \zeta\|_{\varv} \atop \sstyle \| \delta_{ \vv}  \gamma \zeta\-\|_{\varv} \leqslant \|\gamma'\|_{\varv}^2  } } \widetilde W_{\mathrm{o} \, \varv} 
	    \begin{pmatrix}
	       \gamma \gamma'^{\, -1} \zeta\-  & & \\
	    &  \gamma' & \\
	    & &   \zeta
	    \end{pmatrix} Kl_\varv (  \gamma \zeta\-,   \gamma'), 
	    \end{split}
	    \end{equation}
	in which  $Kl_\varv ( \gamma \zeta\-,   \gamma')$ is the Kloosterman sum defined by
	    \begin{align}\label{2eq: Kloosterman, r=3}
	    Kl_\varv ( \gamma \zeta\-, \gamma') =  \sum_{ \nu \, \in \gamma' \frOO_{\,\varv}^\times / \frO_\varv } 
	    \psi_\varv \lp   \nu +   \gamma  \zeta\- \nu\- \rp.
	    \end{align} 
	    In the quotient $ \gamma' \frOO^\times_{\, \varv} / \frO_\varv $ above, the group $ \frO_{\varv}$ acts additively on $\gamma'_{\varv} \frOO^\times_{\,\varv}$ if $\|\gamma'\|_{\varv} > 1$ and $ \gamma'_{\varv} \frOO^\times_{\,\varv} / \frO_\varv = \{1\}$ if $\|\gamma'\|_{\varv} = 1$, that is, imposing that $\nu  =  1$. 
	    Now, let $R$ be a finite set of places where $\pi$ is unramified. We define $   \widetilde W_{\mathrm{o} \, R} = \prod_{\varv \, \in R} \widetilde W_{\mathrm{o} \, \varv} $ and $ K_{R} (\gamma, \zeta, \widetilde W_{\mathrm{o} \, R}) = \prod_{\varv \, \in R} K_{\varv} (\gamma_{\varv}, \zeta_{\varv}, \widetilde W_{\mathrm{o} \, \varv}) $ for $\gamma, \zeta \in \Fx_R = \prod_{\varv \, \in R} \Fx_{\varv}$.

	\begin{prop} \label{prop: Voronoi, adelic} {\rm (\cite[Theorem 1]{Ichino-Templier}).} Let $\zeta \in \BA^S$, let $ R $ be the set of places $\varv$ such that $\|\zeta\|_{\varv} > 1$, and for all $\varv \in S$ let $w_{\varv} \in C_c^{\infty} (F_{\varv}^{\times})$. Then we have the identity
		\begin{align}\label{2eq: Voronoi, adelic}
	 \sum_{\gamma \, \in \Fx}  \hskip -2 pt \psi (\gamma \zeta) W_{\mathrm{o}}^S   
	 \begin{pmatrix}
	 \gamma & \\
	 & 1_{r-1}
	 \end{pmatrix}
	  w_{S} (\gamma)
	 = \sum_{\gamma \, \in \Fx} \hskip -2 pt K_{R} (\gamma, \zeta, \widetilde W_{\mathrm{o} \, R})  \widetilde W_{\mathrm{o}}^{S \cup R}   
	 \begin{pmatrix}
	 \gamma & \\
	 & 1_{r-1}
	 \end{pmatrix}
	 \widetilde  w_S (\gamma).
		\end{align}
	\end{prop}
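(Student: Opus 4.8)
We only indicate the plan of a proof; see \cite{Ichino-Templier} for the details. At bottom, \eqref{2eq: Voronoi, adelic} is the global functional equation of the $\GL_r \times \GL_1$ Rankin--Selberg $L$-function $L(s, \pi \otimes \chiup)$, spelled out at every place. \emph{Step 1.} The plan is to recover each side of \eqref{2eq: Voronoi, adelic} from its sum over $\gamma \in \Fx$ by Mellin inversion along the dual of $\Fx \backslash \BA^\times$: expand in unitary Hecke characters $\chiup$ and integrate over a vertical line $\Re(s) = \sigma$ with $\sigma$ large. The left side is then governed by the local zeta integrals $\int_{\Fx_\varv} w_\varv(x) \chiup_\varv(x) \|x\|_\varv^{s - (r-1)/2} d^\times x$ at $\varv \in S$, by the unramified local factors $L(s, \pi_\varv \otimes \chiup_\varv)$ away from $S \cup R$, and, at the places $\varv \in R$, by the additively twisted local zeta integrals
\[ \int_{\Fx_\varv} \psi_\varv(x \zeta_\varv)\, W_{\mathrm{o}\,\varv}(\mathrm{diag}(x, 1_{r-1}))\, \chiup_\varv(x)\, \|x\|_\varv^{s - (r-1)/2}\, d^\times x ; \]
their product over all places is $L(s, \pi \otimes \chiup)$ up to explicit corrections at $S \cup R$. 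Absolute convergence for $\sigma$ large follows from the compact support of the $w_\varv$ in $\Fx_\varv$ ($\varv \in S$) together with the rapid decay of cuspidal Whittaker functions.

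\emph{Step 2.} Since $\pi$ is cuspidal, $L(s, \pi \otimes \chiup)$ is entire and satisfies $L(s, \pi \otimes \chiup) = \varepsilon(s, \pi \otimes \chiup)\, L(1 - s, \widetilde\pi \otimes \chiup^{-1})$, with $\varepsilon$ a product of local gamma factors $\gamma(s, \pi_\varv \otimes \chiup_\varv, \psi_\varv)$ times ratios of local $L$-factors. Shifting the contour from $\Re(s) = \sigma$ to $\Re(s) = 1 - \sigma$ --- legitimate by that same decay and by the vanishing of $L(s, \pi \otimes \chiup)$ in vertical strips --- turns the left side of \eqref{2eq: Voronoi, adelic} into an expression of the same shape attached to $\widetilde\pi$, $\chiup^{-1}$ and $1 - s$. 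At each $\varv \in S$ the gamma factor $\gamma(1 - s, \pi_\varv \otimes \chiup_\varv, \psi_\varv)$ is precisely what converts $w_\varv$ into its dual $\widetilde w_\varv$ through the defining relation \eqref{2eq: Ichino-Templier}; this produces the factor $\widetilde w_S(\gamma)$ on the right, while the places outside $S \cup R$ supply the Whittaker function $\widetilde W_{\mathrm{o}}^{S \cup R}$.

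\emph{Step 3.} The technical heart is the explicit evaluation, at each $\varv \in R$, of the twisted local zeta integral displayed above: one inserts the Casselman--Shalika formula for $W_{\mathrm{o}\,\varv}$ and integrates the additive character $\psi_\varv$ over the units, which, since $\psi_\varv$ is unramified but $\zeta_\varv \notin \frO_\varv^\times$, spawns Gauss sums when $r = 2$ and the iterated exponential sums $Kl_\varv$ of \eqref{2eq: Kloosterman, r=3} when $r = 3$. After dividing by $\gamma(1 - s, \pi_\varv \otimes \chiup_\varv, \psi_\varv)$, that is, by the appropriate ratio of unramified $L$-factors, the outcome has to be recognized as the Mellin transform of $x \mapsto K_\varv(x, \zeta_\varv, \widetilde W_{\mathrm{o}\,\varv})\, \widetilde W_{\mathrm{o}\,\varv}(\mathrm{diag}(x, 1_{r-1}))$; the support conditions built into $K_\varv$ in \eqref{2eq: K, r=2} and \eqref{2eq: K, r=3} --- in particular, for $r = 3$, the auxiliary sum over $\gamma' \in \Fx_\varv / \frO_\varv^\times$ --- fall out of this matching, the $\gamma'$-sum morally indexing the Bruhat cells of $\GL_3$ relevant to the $\GL_3 \times \GL_1$ functional equation. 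Assembling the per-place identities gives the right side of \eqref{2eq: Voronoi, adelic}; the sum over $\gamma \in \Fx$ there is finite because $\widetilde w_S$ is compactly supported in $\Fx_S$ and $K_\varv$ vanishes as soon as $\|\gamma\|_\varv > \|\zeta_\varv\|_\varv^r$.

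\emph{Main obstacle.} The delicate part is Step 3 when $r = 3$: producing the Kloosterman sums \eqref{2eq: Kloosterman, r=3} out of the Casselman--Shalika (Hall--Littlewood) expansion, and checking that the abstract local functional equation reorganizes the twisted zeta integral exactly into the kernel $K_\varv$ with its support constraints. A secondary but unavoidable chore is the rigorous justification of the interchanges of summation and integration and of the contour shift, which rests on cuspidality (rapid decay of Whittaker functions) and on $w_S$ being compactly supported in $\Fx_S$.
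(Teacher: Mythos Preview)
The paper does not prove this proposition; it is quoted verbatim as \cite[Theorem 1]{Ichino-Templier} and used as a black box. Your outline is a faithful summary of the Ichino--Templier argument (global $\GL_r\times\GL_1$ functional equation unfolded place by place, with the local gamma factor at $\varv\in S$ producing $\widetilde w_\varv$ via \eqref{2eq: Ichino-Templier} and an explicit local computation at $\varv\in R$ producing $K_\varv$), so as a sketch it is essentially correct.

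Two slips are worth fixing. First, what justifies the contour shift is not the ``vanishing of $L(s,\pi\otimes\chiup)$ in vertical strips'' but its polynomial growth there, which is dominated by the rapid decay (in $s$ and in the archimedean parameters of $\chiup$) coming from the Mellin transforms of the smooth compactly supported $w_\varv$. Second, the right-hand sum over $\gamma\in\Fx$ is not finite: for archimedean $\varv\in S$ the dual function $\widetilde w_\varv$ is a Hankel transform and is \emph{not} compactly supported (cf.\ \S\ref{sec: asymptotic} of the paper). What makes the sum absolutely convergent is the rapid decay of $\widetilde w_\varv$ at infinity together with the support conditions on $K_\varv$ and on the unramified Whittaker function $\widetilde W_{\mathrm o}^{S\cup R}$ at the remaining non-archimedean places.
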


\begin{rem}
	We remark that the sign in front of $\gamma  \zeta^{-1} \nu^{-1} $ in \eqref{2eq: Kloosterman, r=3} is negative in  \cite{Ichino-Templier}, indeed $(-)^{r}$, but it is always positive in \cite{Miller-Schmid-2006,Miller-Schmid-2009}. The reason for this inconsistency is not clear to the author, but he believes that the   sigh choice in the latter should be correct. While the sigh does not matter for our problem,  it could be highly sensitive in the subconvexity problems. 
\end{rem}

\begin{rem}
	It is assumed in Theorem {\rm 1} of {\rm\cite{Ichino-Templier}} that $S$   contains all the ramified places of the  additive character  $\psi$. By checking their proof, we find that this assumption is rather superfluous and may be removed. 
	For $r = 2$, the formula {\rm\eqref{2eq: K, r=2}} is solid without any assumptions on $\psi_{\vv}$. For $r = 3$, their arguments may be generalized so that $\psi_{\vv}$ is only required to be trivial on $\frO_{\vv}$. 
	

	\delete{ For $r = 3$, the local  Kloosterman integral is defined by
	\begin{align*}
	K_{\varv} (\gamma, \zeta, \widetilde W_{\mathrm{o} \, \varv}) = \|\zeta\|_{\vv} \int_{ F_{\vv} } \overline {\psi_{\vv}  (x) } \, \widetilde W_{\mathrm{o} \, \varv}   \lp \hskip -2 pt \begin{pmatrix}
	 & 1 & \\
	 1 & & \\
	 & & 1
	\end{pmatrix} \begin{pmatrix}
	1&   & \\
	& \hskip -4 pt - \gamma \zeta\- \hskip - 4 pt & \\
	& & - \zeta
	\end{pmatrix} \begin{pmatrix}
	1& x  & \\
	& 1  & \\
	& & 1
	\end{pmatrix} \rp d x.
	\end{align*}
	Let $ \delta_{ \vv}  $ be a generator of the local different $\mathfrak{D}_{\vv}$ as above. Then $\psi_{\vv}^{ \delta_{\vv} } (x) = \psi_{\vv} (\delta_{\vv}^{-1} x)$ is an unramified character of $F_{\vv}$. The corresponding $\widebar {\psi}_{\vv}^{\delta_{ \vv} } $-Whittaker function is $$ \widetilde W_{\mathrm{o} \, \varv}^{\delta_{ \vv} } (g) = \widetilde W_{\mathrm{o} \, \varv} \lp \begin{pmatrix}
	\delta_{ \vv}^{-2} & & \\
	& \delta_{ \vv}^{-1} & \\
	& & 1
	\end{pmatrix} g \rp. $$
	By some simple calculations, we find that
	\begin{align*}
	K_{\varv} (\gamma, \zeta, \widetilde W_{\mathrm{o} \, \varv}) = \|\delta_{\vv}\|_{\vv}\- \omega_{\pi_{\vv}} (\delta_{ \vv})\- K_{\varv} (\delta_{ \vv}^3 \gamma, \delta_{\vv} \zeta, \widetilde W_{\mathrm{o} \, \varv}^{\delta_{ \vv}} ),
	\end{align*}
	in which $\omega_{\pi_{\vv}}$ is the central character of $ \pi_{\vv} $ and the $K_{\vv}$ on the right is for $\psi_{\vv}^{\delta_{\vv}}$. At any rate, the formula for $K_{\varv} (\gamma, \zeta, \widetilde W_{\mathrm{o} \, \varv})$ in {\rm\cite{Ichino-Templier}} for unramified $\psi_{ \vv }$ {\rm(}$\delta_{ \vv} = 1${\rm)} may be generalized into {\rm(\ref{2eq: K, r=3}, \ref{2eq: Kloosterman, r=3})}.
}
	
\end{rem}
	
	 \subsection{\Voronoi Summation in Classical Formulation}

		Subsequently, we assume that $\pi$ is unramified  and has trivial central character at all the non-archimedean places. 
	 Then one may choose $S = S_{\infty}$. In practice, one usually let    $\zeta \in \BA^{S_{\infty}}$   be the diagonal embedding of a fraction $\alpha / \beta \in F$, and it is preferable to have a   classical formulation of the \Voronoi summation in terms of Fourier coefficients, exponential factors, Kloosterman sums and Hankel transforms.  However, when $F$ does not have class number one,  there is some subtlety   due to the fact that the fraction $\alpha / \beta$ can not alway be chosen so that $\alpha$ and $\beta$ are relatively prime integers.  Therefore the look of our translation of Ichino and Templier's \Voronoi summation formula is slightly different from those in \cite{Miller-Schmid-2004-1, Miller-Schmid-2006} (see Remark \ref{rem: Voronoi}).
	 
	 \vskip 5 pt
	
	When $r = 2$, for  a nonzero ideal $\fra $ of $\frO$ we define the Fourier coefficient 
	\begin{align}\label{2eq: Fourier coefficients, r=2}
	A_{\pi} (\fra) =    \mathrm{N} \lp \fra \mathfrak{D}\- \rp^{    1 / 2} W_{\mathrm{o}}^{S_{\infty}}  
	\begin{pmatrix}
	\fra \mathfrak{D}\- & \\
	& 1 
	\end{pmatrix}.
	\end{align} 
	When $r = 3$, for  nonzero ideals $\fra , \fra '  $ we define  the Fourier coefficient
	\begin{align}\label{2eq: Fourier coefficients, r=3}
	A_{\pi} (\fra' , \fra ) =    \mathrm{N} \lp \fra  \fra' \mathfrak{D}^{-2} \rp W_{\mathrm{o}}^{S_{\infty}}   
	\begin{pmatrix}
	\fra \fra ' \mathfrak{D}^{-2} & & \\
	& \fra ' \mathfrak{D}\- & \\
	& & 1 
	\end{pmatrix}.
	\end{align} 
	For our convenience, we denote $A_{\pi} (\fra) = A_{\pi} (\mathfrak{D}, \fra)$ when $r=3$. 

	It is known from \cite[\S 17]{Qi-Bessel} that, when $\vv$ is archimedean, $\widetilde f_{\vv} (y) = \widetilde w_{\vv} \lp (-)^{r-1} y \rp \| y\|_{\vv}^{- \frac {r-1} 2}$ is the Hankel integral transform of $ f_{\varv} (x) = w_{\vv} \lp x \rp \| x\|_{\vv}^{ - \frac {r-1} 2}$ integrated against the Bessel kernel $J_{\pi_{\varv}} (xy)$ associated with $\pi_{\vv}$ (see \cite[(17.12), (17.13), (17.18), (17.19), (17.20)]{Qi-Bessel}), namely,
	\begin{equation}\label{2eq: Hankel transform tilde u and u}
	\widetilde f_{\vv} \lp  y \rp  =   \int_{F^\times_{\vv} }  J_{\pi_{\vv}} ( x y) f_{\vv} \lp x \rp   d x.  
	\end{equation}
	 We shall postpone the discussions on   $J_{\pi_{\varv}} (x)$ to \S \ref{sec: Bessel, zero} and \ref{sec: Bessel, infinity}.

	 For a nonzero ideal $\frb$ of $\frO$, we define  the additive character $\psi_{\frb}$ on $\frb\- / \frO  $  by $
	 \psi_{ \frb } (\gamma) = e \lp \Tr \,\gamma \rp $, 
	 with $\Tr : F / \frO \ra \BQ / \BZ$ the trace induced from that on $F$ (see  \cite[\S XIV.1]{Lang-ANT}). 
	 Moreover, let $F_{\frb}$ denote the ring of elements $\alpha$ in $F$ such that $ \|  \alpha \|_{\vv} \leqslant 1$ for all $\frp_{\vv} |\frb$, then $\psi_{\frb}$ extends to a character on $\frb\- F_{\frb}  / F_{\frb}  $ via the isomorphism  $F_{\frb} / \frb F_{\frb} \cong \frOO / \frb$.  Let $R = \{ \vv : \frp_{\vv} | \frb \}$. It follows from the definition of $\psi_{ \varv }$, $\varv \in S_f$, in \cite[\S XIV.1]{Lang-ANT} that $ \psi_{\frb} $ is the restriction of $\psi_R = \prod_{\varv \, \in R} \psi_{ \vv }$ on $\frb\- F_{\frb}  / F_{\frb} \cong \frb\- / \frO $ (embedded into $ \prod_{\varv \, \in R} \frb\- \frO_{\vv} / \frO_{\vv}$). 
	 
	 We now choose $\beta \in \Fx$ such that $\mathrm{ord}_{\vv} \, \beta = \mathrm{ord}_{\vv} \frb$ if $\frp_{\vv} | \frb$. For $\alpha \in F_\frb^{\times}$ we let    $\widebar \alpha $ denote a representative in $\frO \cap F_{\frb}^{\times}$ of the multiplicative inverse of $\alpha$ in $F_{\frb} / \frb F_{\frb} \cong \frOO / \frb$ so that $ \psi_{\frb} (\gamma/ \alpha \beta) = \psi_{\frb} (\widebar \alpha \gamma /  \beta)$ for all $\gamma \in F_{\frb}$.   For $\gamma, \gamma' \in F_{\frb}$, we define the Kloosterman sum
	 \begin{align}\label{2eq: Kloosterman sum}
	 S_{\frb} (\gamma, \gamma' ; \beta) = \sum_{ \sstyle \nu \, \in   \frOO /  \frb \atop \sstyle \| \nu \|_{\vv} = 1  \text{ if } \frp_{\vv} |\frb } 
	 \psi_{\frb} \lp   \frac {\gamma \nu +  \gamma '\, \widebar \nu} {\beta }     \rp.
	 \end{align}
	 We have   Weil's bound for the Kloosterman sum
	 \begin{align}\label{2eq: Weil's bound}
	 S_{\frb} (\gamma, \gamma' ; \beta) = O_{\varepsilon} \big( ( \RN \frb)^{  1 / 2 + \varepsilon} \big),
	 \end{align}
	 provided that $ \gamma $ and $\gamma' $ are relatively prime in $\frO_{\vv}$ if $ \frp_{\vv} |\frb $.
	  
\vskip 5 pt 

	 We may reformulate the \Voronoi summation formula in Proposition \ref{prop: Voronoi, adelic} as follows.
	 
	 \begin{prop}\label{2prop: classical Voronoi}
	 	Let notations be as above. Let $\alpha \in F$ and $\beta \in \Fx$ be such that   $\| \alpha\|_{\vv} = 1$ whenever $\|\beta \|_{\vv} < \|\alpha \|_{\vv}$, with $\vv \in S_f$. 
	 	Define $\frb =   \prod_{ \|\beta \|_{\vv} < \|\alpha \|_{\vv} } \frp_{\vv}^{\mathrm{ord}_{\vv} \, \beta}  $. 
	 	For $\vv \in S_{\infty}$, let $ f_{\vv} \in C_c^{\infty} (F_{\vv})$ and $\widetilde f_{\vv}$ be the Hankel transform of $ f_{\vv}$.
	 	Put $
	 	f = \prod_{\vv}   f_{\varv}  $ and $ \widetilde f = \prod_{\vv}  \widetilde f_{\varv}  $.  
	 	 Then, when $r=2$, we have 
	 	\begin{align}\label{2eq: Classical Voronoi, r=2}
	 	\sum_{ \gamma \, \in \frOO \hskip 0.5 pt ' \smallsetminus \{0\} } \hskip - 4 pt A_{\pi} ( \gamma \mathfrak{D} )   \psi_{f} \lp  \frac {\alpha \gamma} {\beta} \rp     f  (\gamma) =
	 	\frac 1 {  \RN \frb } \sum_{ \sstyle \gamma \, \in \Fx \atop \sstyle \gamma \frb^2 \subset \frO} \hskip - 4 pt
		 A_{\widetilde \pi} \lp \gamma \frb^2 \mathfrak{D} \rp \psi_{\frb} \lp - \frac {\widebar \alpha ( \beta^2 \gamma  ) } {\beta} \rp  \widetilde  f \lp   {\gamma}   \rp,
	 	\end{align} 
	 	and, when $r = 3$, we have 
	 	\begin{equation}\label{2eq: Classical Voronoi, r=3}
	 	\begin{split}
	 	 &\sum_{ \gamma \, \in \frOO \hskip 0.5 pt ' \smallsetminus \{0\} }    A_{\pi}  (\mathfrak{D}, \gamma \mathfrak{D})  \psi_{f} \lp   \frac {\alpha \gamma} {\beta} \rp f  (\gamma)  \\
	 	 & \hskip 6 pt =   \ \sum_{  \frb \, \subset \fra' \, \subset \frOO \hskip 0.5 pt '}     \frac { \RN \fra' } { (\RN \frb)^2}    
	 	\sum_{ \sstyle \gamma \, \in \Fx \atop \sstyle \gamma \frb^3 \subset \fra'^2 \frOO \hskip 0.5 pt ' }  
	 	A_{\widetilde \pi} \lp \fra' \mathfrak{D} , \gamma \frb^3 \fra'^{-2} \mathfrak{D} \rp S_{\frb \fra'^{-1}} \lp 1,  \widebar \alpha   \beta^3 \gamma/ \gamma'^2   ; \beta / \gamma' \rp 
	 	\widetilde  f \lp    {\gamma  }   \rp,
	 	\end{split}
	 	\end{equation} 
	 	in which $\gamma'  $ is an element in $F $ such that $\mathrm{ord}_{\vv} \gamma'   = \mathrm{ord}_{\vv}  \fra'$ whenever $\frp_{\vv } | \frb$.
	 \end{prop}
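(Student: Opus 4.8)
The plan is to derive the classical formulae (\ref{2eq: Classical Voronoi, r=2}, \ref{2eq: Classical Voronoi, r=3}) directly from the adelic identity (\ref{2eq: Voronoi, adelic}) in Proposition \ref{prop: Voronoi, adelic} by unwinding all local factors. Set $S = S_{\infty}$ (permissible since $\pi$ and $\psi$ are unramified at every finite place) and let $\zeta \in \BA^{S_{\infty}}$ be the adele that is the fraction $\alpha/\beta$ at every non-archimedean place and trivial at the archimedean ones; the hypothesis $\|\alpha\|_{\vv} = 1$ whenever $\|\beta\|_{\vv} < \|\alpha\|_{\vv}$ guarantees that $\|\zeta\|_{\vv} > 1$ exactly at the primes $\frp_{\vv}$ dividing $\frb = \prod_{\|\beta\|_{\vv} < \|\alpha\|_{\vv}} \frp_{\vv}^{\mathrm{ord}_{\vv}\beta}$, so the set $R$ of Proposition \ref{prop: Voronoi, adelic} is precisely $\{\vv : \frp_{\vv} \mid \frb\}$. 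The left side of (\ref{2eq: Voronoi, adelic}) then becomes $\sum_{\gamma \in \Fx} \psi_{\infty}(\gamma\zeta_{\infty})\, W_{\mathrm o}^{S_\infty}\!\begin{pmatrix}\gamma & \\ & 1_{r-1}\end{pmatrix} w_{S_\infty}(\gamma)$, and since $\psi_f(\gamma\zeta) = 1$ forces $\psi(\gamma\zeta) = \psi_\infty(-\alpha\gamma/\beta)$ up to the usual normalization $\psi_\infty(x) = e(-\Tr x)$; after inserting the definitions (\ref{2eq: Fourier coefficients, r=2})–(\ref{2eq: Fourier coefficients, r=3}) of $A_\pi$ and the relation (\ref{2eq: Hankel transform tilde u and u}) between $\widetilde f_\vv$ and $f_\vv = w_\vv(x)\|x\|_\vv^{-(r-1)/2}$, this matches the left-hand sides of (\ref{2eq: Classical Voronoi, r=2}) and (\ref{2eq: Classical Voronoi, r=3}).

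The substance is the right side: one must evaluate $\sum_{\gamma \in \Fx} K_R(\gamma,\zeta,\widetilde W_{\mathrm o\, R})\, \widetilde W_{\mathrm o}^{S_\infty \cup R}\!\begin{pmatrix}\gamma & \\ & 1_{r-1}\end{pmatrix} \widetilde w_{S_\infty}(\gamma)$ place by place. Here I would work prime by prime over $R$, using the explicit formulae (\ref{2eq: K, r=2}), (\ref{2eq: K, r=3}) for $K_\vv$ together with the fact that the unramified Whittaker function factors via the Casselman–Shalika/Shintani formula as a Schur polynomial supported on dominant integral diagonal entries. For $r = 2$: the constraint $\|\gamma\|_\vv \le \|\zeta\|_\vv^2$ and the choice of a global $\zeta$ representative let one absorb the $\psi_\vv(\gamma\zeta^{-1})$ twist into a global exponential $\psi_{\frb}$, and matching the index of summation against $\gamma\frb^2 \subset \frO$, with the normalizing constant $1/\RN\frb$, reproduces (\ref{2eq: Classical Voronoi, r=2}); the appearance of $A_{\widetilde\pi}(\gamma\frb^2)$ rather than $A_{\widetilde\pi}(\gamma)$ is exactly the class-number subtlety alluded to in the text, handled by writing ideals instead of elements. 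For $r = 3$: the inner sum over $\gamma' \in \Fx_\vv/\frO_\vv^\times$ in (\ref{2eq: K, r=3}) is matched, after taking products over $\vv \in R$ and using CRT, with a single sum over an ideal $\fra'$ with $\frb \subset \fra' \subset \frO$ and a residue $\gamma$ with $\gamma\frb^3 \subset \fra'^2$; the local Kloosterman sums (\ref{2eq: Kloosterman, r=3}) assemble via the twisted multiplicativity of Kloosterman sums into the global Kloosterman sum $S_{\frb\fra'^{-1}}$ of (\ref{2eq: Kloosterman sum}), and the weight $\|\zeta\|_\vv$ in (\ref{2eq: K, r=3}) produces the constant $\RN\fra'/(\RN\frb)^2$ after combining the $\|\zeta\|_\vv = \|\beta/\alpha\|_\vv$ factors with the $\RN\fra'$ coming from the Fourier-coefficient normalization (\ref{2eq: Fourier coefficients, r=3}).

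The main obstacle is purely bookkeeping-theoretic rather than analytic: aligning the local data (diagonal torus entries, the auxiliary $\gamma'$, the modulus of the Kloosterman sum, the power of $\RN\frb$) into genuinely global ideal-theoretic quantities when $F$ has class number $> 1$, so that $\alpha$ and $\beta$ cannot be chosen coprime and $\beta$ cannot be chosen to generate $\frb$. One keeps track of this by fixing, once and for all, a $\beta \in \Fx$ with $\mathrm{ord}_\vv\beta = \mathrm{ord}_\vv\frb$ at $\frp_\vv \mid \frb$ (and similarly a $\gamma'$ for $\fra'$ in the $r=3$ case), letting $\overline\alpha$ be a representative of the inverse of $\alpha$ modulo $\frb$, and verifying that every combination of local terms that occurs is invariant under the residual freedom in these choices, so the resulting global expression is well defined. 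Once the dictionary is set up the verification that the two sides agree term by term is a finite check at each $\vv \in R \cup S_\infty$; the archimedean places contribute only the Hankel transforms via (\ref{2eq: Hankel transform tilde u and u}), with no arithmetic, and the places outside $R \cup S_\infty$ contribute the unramified Rankin–Selberg Dirichlet series packaged inside the Fourier coefficients, so no convergence issue arises beyond that already handled in \cite{Ichino-Templier}.
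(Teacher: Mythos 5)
Your proposal follows the same (and essentially only) route the paper has in mind: the paper offers no proof beyond ``by some straightforward calculations,'' and your plan---take $S = S_\infty$ in Proposition~\ref{prop: Voronoi, adelic}, let $\zeta$ be the image of $\alpha/\beta$ away from $S_\infty$ so that $R = \{\vv : \frp_\vv \mid \frb\}$, then unwind the local $K_\vv$ factors, Whittaker support conditions, and Kloosterman sums while tracking the class-number bookkeeping via ideals and the auxiliary elements $\beta, \gamma', \overline\alpha$---is precisely that calculation. One slip in wording: you assert that ``$\psi_f(\gamma\zeta) = 1$ forces $\psi(\gamma\zeta) = \psi_\infty(-\alpha\gamma/\beta)$,'' but in fact $\psi_f(\gamma\zeta)$ is not $1$; rather $\zeta_\infty$ is trivial, so $\psi(\gamma\zeta) = \psi_f(\gamma\zeta)$, and then the product formula together with $\psi|_F \equiv 1$ yields $\psi_f(\gamma\zeta) = \psi_\infty(\gamma\alpha/\beta)^{-1} = \psi_\infty(-\gamma\alpha/\beta)$.
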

 
 \begin{proof}[Sketch of Proof] In the above settings, it is clear that  the left hand side of   \eqref{2eq: Voronoi, adelic} translates into that of \eqref{2eq: Classical Voronoi, r=2} or \eqref{2eq: Classical Voronoi, r=3}  for $r=2$ or $r=3$. As for the right hand side of \eqref{2eq: Voronoi, adelic}, we  just need to compute   the non-archimedean part, as the archimedean part has already be given by Hankel transforms as above. In order to be consistent with our normalization of Hankel transforms, we change $\gamma$ to $(-)^{r-1} \gamma$. In the following, we shall only consider the  case $r = 2$ and leave  to   readers the  similar but slightly more complicated computations for the case $r = 3$.
 	
Invoking \eqref{2eq: K, r=2}, we infer that 
\begin{align*}
K_{R} (- \gamma, \alpha/\beta, \widetilde W_{\mathrm{o} \, R})  \widetilde W_{\mathrm{o}}^{S_{\infty} \cup R}  = \widetilde W_{\mathrm{o} \, R} \begin{pmatrix}
- \gamma \beta/\alpha & \\
& \alpha/\beta
\end{pmatrix}  \widetilde W_{\mathrm{o}}^{S_{\infty} \cup R}   
\begin{pmatrix}
- \gamma & \\
& 1 
\end{pmatrix} \cdot \psi_{R} \lp - \frac {\beta \gamma} {\alpha} \rp. 
\end{align*}
First, since $  \pi_{\varv} $ and $\widetilde \pi_{\varv} $ are adjusted to have trivial central character for $\varv \in S_f$ and $ \| \alpha\|_{\vv} = 1$  for all $\varv \in R$, the product of Whittaker functions is equal to 
\begin{align*}
\widetilde W_{\mathrm{o} \, R} \begin{pmatrix}
- \gamma \beta^2/\alpha^2 & \\
& 1
\end{pmatrix}  \widetilde W_{\mathrm{o}}^{S_{\infty} \cup R}   
\begin{pmatrix}
- \gamma & \\
& 1 
\end{pmatrix} & = \widetilde W_{\mathrm{o} \, R} \begin{pmatrix}
- \gamma \beta^2  & \\
& 1
\end{pmatrix}  \widetilde W_{\mathrm{o}}^{S_{\infty} \cup R}   
\begin{pmatrix}
- \gamma & \\
& 1 
\end{pmatrix} \\
& = \widetilde W_{\mathrm{o}  }^{S_{\infty}} \begin{pmatrix}
\gamma \frb^2  & \\
& 1
\end{pmatrix}   = \frac {A_{\widetilde \pi} \lp \gamma \frb^2 \mathfrak{D} \rp } {|\RN \gamma|^{1/2}  \RN \frb}   
\end{align*}
if $ \gamma \in F^{\times} \cap \frb^{-2} $ and vanishes otherwise. Second,  by the discussions on  $\psi_{ \frb }$  as above,  we have $ \psi_{ R } \lp -  {\beta  \gamma}/ {\alpha  } \rp = \psi_{ \frb } \lp -  {\beta^2 \gamma}/ {\alpha \beta} \rp = \psi_{ \frb } \lp  - \widebar \alpha {\lp \beta^2 \gamma \rp}/ { \beta} \rp$ for $ \gamma \in F^{\times} \cap \frb^{-2} $. Now we arrive at the right hand side of \eqref{2eq: Classical Voronoi, r=2}.
 \end{proof}

\begin{rem}\label{rem: Voronoi}
	When $F$ is of class number one, then we may choose $\alpha$ and $\beta$ to be a pair of integers that are relatively prime  and let $\frb = (\beta)$, $\fra' = (\gamma')$. In this way, upon changing $\gamma$ to $\gamma / \beta^2$ or $\gamma \gamma'^2 / \beta^3$, we obtain the classical \Voronoi summation formula as  in \cite{Miller-Schmid-2004-1, Miller-Schmid-2006}. Note that our normalization of Hankel transforms in \cite{Qi-Bessel} is slightly different  from   that in \cite{Miller-Schmid-2004-1, Miller-Schmid-2006}   in order to get the {\rm(}inverse{\rm)} Fourier transform when $r = 1$ and the classical Hankel transform when $r=2$. When the place $\varv$ is real,  $\|y\|_{\varv} \widetilde f_{\varv} \lp (-)^{r} y \rp$ is equal to the $F (y)$ in \cite[Theorem 1.18]{Miller-Schmid-2006} or \cite[Theorem 1.10]{Miller-Schmid-2009} {\rm(}if $f_{\varv} (x)$ is their $f (x)${\rm)}.
\end{rem}

\begin{rem}\label{rem: Voronoi, 2}
	In higher rank $r \geqslant 4$, following the computations in the proof above for $r = 2$, Ichino and Templier's  \Voronoi summation formula for $\GL_r $ may also be reformulated in the classical language, involving hyper-Kloosterman sum on the right. As can be seen in \cite{Miller-Schmid-2009}, the notations will be more complicated.
\end{rem}

	\section{Average of Fourier Coefficients} \label{sec: average}

	The Rankin-Selberg $L$-function
	\begin{equation*}
	L (s, \pi \otimes \widetilde{\pi}) = \left\{ \begin{split}
	& \sum_{ \fra \neq 0}  |A_{\pi} (\fra) |^2 (\RN \fra)^{- s}, \hskip 45 pt \text{ if } r = 2, \\
	& \mathop {\sum \sum}_{ \fra ,   \fra ' \neq 0} |A_{\pi} (\fra', \fra) |^2  \RN ( \fra \fra'^{\, 2}) ^{- s},  \hskip 10 pt \text{ if } r = 3, \end{split} \right. 
	\end{equation*}
	initially convergent for $s$ of real part   large, has a meromorphic continuation to the whole complex plane with only a simple pole at $s = 1$  (see \cite{J-S-Rankin-Selberg}). The Wiener-Ikehara theorem yields
	\begin{align*}
	& \sum_{ \RN \fra\, \leqslant X} |A_{\pi} (\fra) |^2 = O (X), \\
	 \mathop {\sum \sum}_{ \RN ( \fra \fra'^2)  \leqslant X } |A_{\pi} (\fra', \fra) |^2 & = O (X), \hskip 10 pt   \sum_{ \RN  \fra  \, \leqslant X } |A_{\pi} (\fra', \fra) |^2 = O \lp (\RN \fra')^2 X \rp .
	\end{align*}
	In addition, 
	It  follows from the Wiener-Ikehara theorem for the Dedekind zeta function of $F$ that
	$$\sum_{ \RN \fra \, \leqslant X} 1 = O (X). $$
	It should be remarked that Wiener-Ikehara would yield asymptotics and not just  upper bounds.
	Hence,   Cauchy-Schwarz implies
	\begin{align}
	\label{2eq: average over ideals, r = 2}  \sum_{ \RN \fra\, \leqslant X} |A_{\pi} (\fra) | & = O (X) , \\
	\label{2eq: average over ideals, r = 3}  \sum_{ \RN  \fra \, \leqslant X } |A_{\pi} (\fra', \fra) | & = O \lp  \RN \fra'   X \rp.
	\end{align}
	All the implied constants above depend only on $F$ and $\pi$. 
	More generally, for $0 \leqslant c < 1$, partial summation yields
	\begin{align}
	\label{2eq: general average over ideals, r = 2} \sum_{ \RN \fra \, \leqslant X} \frac {|A_{\pi} (\fra) |} {(\RN \fra)^{  c}}   =  & O_{c, \,\pi}  \lp X^{1 - c} \rp, \\
	\label{2eq: general average over ideals, r = 3}\sum_{ \RN  \fra \, \leqslant X } \frac {|A_{\pi} (\fra', \fra) |} { (\RN \fra)^{  c} } & = O_{c, \,\pi}   \lp  \RN \fra'   X^{1-c} \rp.
	\end{align}
	In the partial summation identity
	\begin{align*}
	\sum_{n=1}^X a_n b_n= \sum_{ n =1}^{X-1} S_n (b_{n} - b_{n+1}) + S_X b_X, \hskip 10 pt S_n = \sum_{ m = 1}^n a_m,
	\end{align*}
	if one chooses $a_n = \sum_{ \RN \fra = n } |A_{\pi} (\fra) |$ and $b_n = n^{- c}$, then \eqref{2eq: general average over ideals, r = 2} follows immediately from \eqref{2eq: average over ideals, r = 2}; one may prove \eqref{2eq: general average over ideals, r = 3}   in the same way.
	
	\vskip 5 pt
	
	Subsequently, we shall restrict ourselves to the integral ideals in a given ideal class, then both \eqref{2eq: general average over ideals, r = 2} and \eqref{2eq: general average over ideals, r = 3} hold if the sums are over these ideals.  More precisely, given an ideal $\fra$ of $\frO$, we have
	\begin{align}
	\label{2eq: general average over ideals, 2, r = 2} 
	\sum_{\sstyle [\gamma] \, \in \Fx / \frOO^{\times}  \atop
	\sstyle 1\leqslant |\RN \gamma| \RN \fra \, \leqslant  X } \frac {|A_{\pi} (\gamma \fra) |} { |\RN \gamma |^c }   =  & O_{c, \,\pi}  \lp (\RN \fra)^{  c} X^{1 - c} \rp, \\
	\label{2eq: general average over ideals, 2, r = 3}
	\sum_{\sstyle [\gamma] \, \in \Fx / \frOO^{\times}  \atop
		\sstyle 1\leqslant |\RN \gamma| \RN \fra \, \leqslant  X }  \frac {|A_{\pi} (\fra', \gamma \fra) |} {  |\RN \gamma|^{  c} } & = O_{c, \,\pi}   \lp  (\RN \fra' )  (\RN \fra)^c X^{1-c} \rp,
	\end{align}
	where   $[\gamma]$ is the orbit in $\Fx$ that contains $\gamma$, under the  action of $\frOO^{\times}$, namely, $ [\gamma] = \left\{ \gamma \epsilon : \epsilon \in \frOO^{\times} \right\} $.

\delete{	Hence, we have
	\begin{align}
	\label{2eq: general average over principal ideals, r = 2} \sum_{ \sstyle [\gamma] \in \frO \smallsetminus\{0\} / \frOO^{\times} \atop \sstyle |\RN \gamma| \leqslant X } \frac {|A_{\pi} (\gamma) |} {|\RN \gamma|^{  c}}   =  & O_{c, \,\pi}  \lp X^{1 - c} \rp, \\
	\label{2eq: general average over principal ideals, r = 3}\sum_{\sstyle [\gamma] \in \frO \smallsetminus\{0\} / \frOO^{\times} \atop \sstyle |\RN \gamma| \leqslant X  } \frac {|A_{\pi} (\gamma', \gamma) |} { |\RN \gamma|^{  c} } & = O_{c, \,\pi}   \lp  |\RN \gamma' |  X^{1-c} \rp.
	\end{align}
}
	
	\begin{lem}\label{lem: Unit}
		Let $V > 0$. For $T \in \BR_+^{r_1 + r_2}$ define $ \RN (T) = \prod_{\vv} T_{\vv}^{N_{\vv}} $ and $ F_{\infty} (T) = \left\{ x \in F_{ {\infty}} : \| x \|_{\vv} \leqslant   T_{\vv}^{N_{\vv}} \right\}$. Suppose that $ V \RN (T)    \geqslant 1$. Then, for all $\gamma \in \Fx$ with $|\RN \gamma | \geqslant 1/ V$, we have
		\begin{align*}
		\mathrm{Card} \left\{ [\gamma]  \cap  F_\infty (T) \right\}  \lll 1 +   \log (V \RN (T))  ^{r_1 + r_2 - 1},
		\end{align*}
		with the implied constant depending only on $F$.
	\end{lem}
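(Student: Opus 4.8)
The plan is to reduce $\mathrm{Card}\{[\gamma]\cap F_\infty(T)\}$ (counted under the embedding $\sigma$) to the number of points of the unit lattice inside a bounded simplex, and then to estimate that by a standard covering argument based on Dirichlet's unit theorem.

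Since $F$ is a field and $\gamma\neq 0$, the map $\epsilon\mapsto\gamma\epsilon$ is a bijection of $\frO^{\times}$ onto $[\gamma]$, and $\gamma\epsilon\in F_\infty(T)$ if and only if $\|\gamma\|_{\vv}\,\|\epsilon\|_{\vv}\leqslant T_{\vv}^{N_{\vv}}$ for every $\vv\in S_\infty$. Taking logarithms, this is equivalent to $L(\epsilon)\in R$, where
$$L:\frO^{\times}\longrightarrow H:=\big\{\,y\in\BR^{r_1+r_2}:{\textstyle\sum_{\vv}}\,y_{\vv}=0\,\big\},\qquad L(\epsilon)=\big(\log\|\epsilon\|_{\vv}\big)_{\vv\in S_\infty},$$
is the logarithmic embedding, and
$$R=\big\{\,y\in H:y_{\vv}\leqslant c_{\vv}\ \text{for all}\ \vv\in S_\infty\,\big\},\qquad c_{\vv}:=N_{\vv}\log T_{\vv}-\log\|\gamma\|_{\vv}.$$
By Dirichlet's unit theorem (see \cite[Ch.~V]{Lang-ANT}) the kernel of $L$ is the finite group $\mu_F$ of roots of unity in $F$, and its image $\Lambda$ is a full lattice of rank $r_1+r_2-1$ in $H$; both $\mu_F$ and $\Lambda$ — hence the covolume of $\Lambda$ and the diameter of a fixed fundamental parallelotope $P\ni 0$ of $\Lambda$ — depend on $F$ alone. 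Consequently $\mathrm{Card}\{[\gamma]\cap F_\infty(T)\}=|\mu_F|\cdot\#(\Lambda\cap R)$, and it suffices to bound $\#(\Lambda\cap R)$.

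The substitution $z_{\vv}=c_{\vv}-y_{\vv}$ carries $R$ isometrically onto $\big\{\,z\in\BR_{\geqslant 0}^{r_1+r_2}:\sum_{\vv}z_{\vv}=C\,\big\}$, the standard $(r_1+r_2-1)$-simplex scaled by
$$C:=\sum_{\vv}c_{\vv}=\sum_{\vv}N_{\vv}\log T_{\vv}-\sum_{\vv}\log\|\gamma\|_{\vv}=\log\RN(T)-\log|\RN\gamma|,$$
where we used $\sum_{\vv}N_{\vv}\log T_{\vv}=\log\RN(T)$ and $\prod_{\vv\in S_\infty}\|\gamma\|_{\vv}=|\RN\gamma|$. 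If $C<0$ then $R=\emptyset$ and the count is $0$; otherwise the hypotheses $|\RN\gamma|\geqslant 1/V$ and $V\RN(T)\geqslant 1$ give $0\leqslant C\leqslant\log(V\RN(T))$, and every coordinate of a point of $R$ lies in an interval of length $\leqslant C$, so $R$ is contained in a ball of radius $\ll_F C$.

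Finally, the translates $\lambda+P$ with $\lambda\in\Lambda\cap R$ are pairwise disjoint and contained in the $\mathrm{diam}(P)$-neighbourhood $R'$ of $R$, which lies in a ball of radius $\ll_F 1+C$; hence
$$\#(\Lambda\cap R)\leqslant\frac{\mathrm{vol}(R')}{\mathrm{vol}(P)}\ll_F(1+C)^{r_1+r_2-1}\ll_F 1+\big(\log(V\RN(T))\big)^{r_1+r_2-1},$$
using $0\leqslant C\leqslant\log(V\RN(T))$ and $V\RN(T)\geqslant 1$ in the last step. Multiplying by $|\mu_F|=O_F(1)$ gives the asserted bound. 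The only point that demands attention is to track that every implied constant — $|\mu_F|$, the covolume of $\Lambda$, $\mathrm{diam}(P)$, and the purely combinatorial factor in $(1+C)^{r_1+r_2-1}\ll 1+C^{r_1+r_2-1}$ — depends on $F$ alone; there is no substantive difficulty, this being the classical estimate for the number of associates of an algebraic integer lying in a box.
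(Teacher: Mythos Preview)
Your proof is correct and follows essentially the same approach as the paper: both use Dirichlet's unit theorem to pass via the logarithmic embedding to a lattice-point count in an $(r_1+r_2-1)$-dimensional simplex of side $\leqslant \log(V\RN(T))$, bound that count by a volume argument, and then account for the finite torsion $\mu_F$. The only cosmetic difference is that the paper views $\log[\gamma]$ as a shifted lattice intersected with the fixed region $\log F_\infty(T)$, whereas you shift the region instead and work directly with the unit lattice $\Lambda$; your covering argument for the lattice-point count is also spelled out slightly more explicitly than the paper's.
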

	
	\begin{proof}
		By Drichlet's unit theorem (see \cite[\S V.1]{Lang-ANT}), the rank of $\frOO^{\times}$ is $ r_1 + r_2 - 1$ and the torsion in $\frOO^{\times}$ is the finite cyclic group consisting of all the roots of unity in $F$. Define the map $ \log: F_{ {\infty}} \ra \BR^{r_1 + r_2} $ by 
		$\log x = ( \log \| x \|_{\vv} )_{\vv \, \in S_{\infty}} $. Restricting on  $\frOO^{\times}$, the image $ \log (\frOO^{\times}) $ is an $(r_1+r_2-1)$-dimensional lattice $\varLambda$ in the hyperplane $\BL^{r_1 + r_2 - 1} = \left\{ y \in \BR^{r_1 + r_2} : \sum y_{\vv} = 0 \right\}$ and $\log $ has a finite kernel, namely, the  torsion of $\frOO^{\times}$.
		
		We now want to count the number of points in the image of $[\gamma]  \cap  F_\infty (T)$ under the map $\log$. We have $\log [\gamma] = ( \log \| \gamma \|_{\vv} ) + \varLambda $ and it is a shifted lattice lying in the hyperplane $ \BL^{r_1 + r_2 - 1}_{\log |\RN \gamma |}$, with the definition $ \BL^{r_1 + r_2 - 1}_Y =  \{ y \in \BR^{r_1 + r_2} : \sum y_{\vv} = Y  \} $. On the other hand,  $\log F_\infty (T) =  \{ y \in \BR^{r_1 + r_2} : y_{\vv} \leqslant N_{\vv} \log T_{\vv}  \}$. Note that $ \BL^{r_1 + r_2 - 1}_{\log |\RN \gamma |} \cap \log F_\infty (T)$ are $(r_1 + r_2 - 1)$-dimensional simplexes of the same shape for all $\gamma \in \Fx$. Their volumes  do not exceed the volume of  $ \BL_{- \log V}^{r_1 + r_2 - 1}  \cap \log F_\infty (T)$ as $|\RN \gamma | \geqslant 1/ V$ and hence are uniformly $O \lp (\log V + \log \RN(T))^{r_1 + r_2 - 1} \rp$. Therefore, the number of the lattice points in $\log [\gamma]$ lying in the simplex $ \BL^{r_1 + r_2 - 1}_{ \log |\RN \gamma | } \cap \log F_\infty (T)$ is $O \big( 1 + \log (V \RN (T))^{r_1 + r_2 - 1} \big)$. 
		 Finally, taking account of the finite torsion of $\frOO^{\times}$, which disappears under $\log$, it is clear that the number of points in $[\gamma]  \cap  F_\infty (T)$ is still $O \big(  1 + \log (V \RN (T))^{r_1 + r_2 - 1} \big)$. 
	\end{proof}

	\begin{lem}\label{lem: average over integers}
		Let notations be as above. 
		Let $0 \leqslant c < 1$. Suppose that $\RN \fra \RN(T) \geqslant 1$. Then
		\begin{align*}
		\sum_{\sstyle \gamma \, \in \, \Fx   \cap F_\infty (T) \atop \sstyle    \gamma \fra \, \subset \frO } 
		\frac { |A_{\pi}(\gamma \fra)|} {|\RN \gamma|^{c} } & = O \lp  (\RN \fra)^{1+\varepsilon} \RN(T)^{1-c+\varepsilon} \rp, \\
		\sum_{\sstyle \gamma \, \in \, \Fx   \cap F_\infty (T) \atop \sstyle  \gamma \fra \, \subset \frO }   
		\frac { |A_{\pi} (\fra', \gamma \fra) | } {|\RN \gamma|^{c}} & = O \lp          \RN \fra' (\RN \fra)^{1+\varepsilon} \RN(T)^{1-c+\varepsilon} \rp,
		\end{align*}
		with the implied constants depending only on $\varepsilon$, $c$, $F$ and $\pi$.
	\end{lem}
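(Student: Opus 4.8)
The plan is to deduce Lemma~\ref{lem: average over integers} from the ideal-theoretic averages \eqref{2eq: general average over ideals, 2, r = 2} and \eqref{2eq: general average over ideals, 2, r = 3} by partitioning the sum over $\gamma \in \Fx \cap F_\infty(T)$ with $\gamma\fra \subset \frO$ into orbits under the unit group $\frO^{\times}$ and then counting how many representatives of each orbit lie in the box $F_\infty(T)$. First I would observe that for $\gamma\fra \subset \frO$ the quantity $A_\pi(\gamma\fra)$ depends only on the ideal $\gamma\fra$, hence only on the orbit $[\gamma]$; similarly $|\RN\gamma|$ is orbit-invariant up to the fixed constant coming from $|\RN\epsilon| = 1$ for $\epsilon \in \frO^{\times}$. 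Writing $ \sum_{\gamma} = \sum_{[\gamma]} \sum_{\gamma \in [\gamma] \cap F_\infty(T)}$, the inner sum is just $\mathrm{Card}\{[\gamma] \cap F_\infty(T)\}$ times the common value of the summand.

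The key input is Lemma~\ref{lem: Unit}, applied with $V$ chosen so that $V \geqslant 1/|\RN\gamma|$ on the range of summation: since $\gamma\fra \subset \frO$ forces $|\RN\gamma|\RN\fra \geqslant 1$, i.e. $|\RN\gamma| \geqslant 1/\RN\fra$, one takes $V = \RN\fra$. The hypothesis $\RN\fra\,\RN(T) \ggg 1$ is exactly $V\RN(T) \geqslant 1$, so Lemma~\ref{lem: Unit} gives
\[
\mathrm{Card}\{[\gamma] \cap F_\infty(T)\} \lll 1 + \log\big(\RN\fra\,\RN(T)\big)^{\,r_1+r_2-1} \lll \big(\RN\fra\,\RN(T)\big)^{\varepsilon},
\]
for $T$ (or $\RN(T)$) large, absorbing the logarithmic factor into $\RN(T)^{\varepsilon}$ (and, harmlessly, $(\RN\fra)^{\varepsilon}$). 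Then I would bound the orbit sum by extending it to a sum over all ideals $\gamma\fra$ with $1 \leqslant |\RN\gamma|\RN\fra \leqslant \RN(T)\RN\fra$ — note $\gamma \in F_\infty(T)$ gives $|\RN\gamma| = \prod_{\vv} \|\gamma\|_{\vv} \leqslant \RN(T)$ — weighted by $|\RN\gamma|^{-c}$, which is precisely the left-hand side of \eqref{2eq: general average over ideals, 2, r = 2} (resp. \eqref{2eq: general average over ideals, 2, r = 3}) with $X = \RN\fra\,\RN(T)$. That yields $O_{c,\pi}\big((\RN\fra)^c (\RN\fra\,\RN(T))^{1-c}\big) = O_{c,\pi}\big(\RN\fra\,\RN(T)^{1-c}\big)$ for $r=2$, and the analogous bound with an extra factor $\RN\fra'$ for $r=3$. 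Multiplying by the $(\RN\fra\,\RN(T))^{\varepsilon}$ orbit-count gives the claimed estimates.

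I expect the only real subtlety to be bookkeeping: one must make sure the passage from "sum over $\gamma \in [\gamma] \cap F_\infty(T)$" to "one term per ideal $\gamma\fra$" is legitimate — different orbits $[\gamma]$ give genuinely different ideals $\gamma\fra$ (since $\gamma\fra = \gamma'\fra$ forces $\gamma/\gamma' \in \frO^{\times}$), so no ideal is counted twice, and the orbit count and the per-ideal weight decouple cleanly. One should also check that the constraint "$\gamma\fra \subset \frO$" on the original sum only shrinks the range, so dropping it (i.e. bounding by the full sum over ideals of the form $\gamma\fra$) is a valid majorization. Everything else is a routine combination of Lemma~\ref{lem: Unit} with \eqref{2eq: general average over ideals, 2, r = 2}–\eqref{2eq: general average over ideals, 2, r = 3}; no new analytic ingredient is needed.
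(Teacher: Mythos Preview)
Your proposal is correct and follows essentially the same argument as the paper: decompose the sum into $\frO^{\times}$-orbits, bound the orbit cardinality $\mathrm{Card}\{[\gamma]\cap F_\infty(T)\}$ via Lemma~\ref{lem: Unit} with $V=\RN\fra$, and then invoke \eqref{2eq: general average over ideals, 2, r = 2} (resp.\ \eqref{2eq: general average over ideals, 2, r = 3}) with $X=\RN\fra\,\RN(T)$ on the resulting orbit sum. The paper carries this out in a single displayed chain of inequalities; your added bookkeeping remarks (orbit-invariance of the summand, injectivity of $[\gamma]\mapsto\gamma\fra$) are exactly the justifications implicit in that chain.
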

	\begin{proof} We shall only prove the first estimate as the second follows in the same way. By Lemma \ref{lem: Unit} and \eqref{2eq: general average over ideals, 2, r = 2},
		\begin{align*}
		\sum_{\sstyle \gamma \, \in \, \Fx   \cap F_\infty (T) \atop \sstyle    \gamma \fra \, \subset \frO } 
		 \frac { |A_{\pi}(\gamma \fra)| } {|\RN \gamma|^{c}} & = \sum_{\sstyle [\gamma] \in \Fx / \frOO^{\times} \atop \sstyle 1/ \RN \fra \, \leqslant  |\RN \gamma|  \, \leqslant   \RN(T) } 
		\frac { |A_{\pi}(\gamma \fra)|} {|\RN \gamma|^{c}} \cdot \mathrm{Card} \left\{ [\gamma]  \cap  F_\infty (T) \right\} \\
		& \lll \lp 1 +  \log (\RN \fra \RN(T))^{r_1 + r_2 - 1} \rp \sum_{\sstyle [\gamma] \in \Fx / \frOO^{\times} \atop \sstyle 1\leqslant |\RN \gamma| \RN \fra \, \leqslant \RN \fra \RN(T) } 
		\frac { |A_{\pi}(\gamma \fra)|} {|\RN \gamma|^{c}} \\ 
		& \lll \lp 1 +  \log (\RN \fra \RN(T))^{r_1 + r_2 - 1} \rp \RN \fra \RN(T)^{1-c}\\
		& \lll  (\RN \fra)^{1+\varepsilon} \RN(T)^{1-c+\varepsilon}.
		\end{align*}
	\end{proof}
	
		\begin{lem}\label{lem: average over integers, 2}
		Let notations be as above.   
		For a nonempty subset $S \subset S_{\infty}$ and $T \in \BR_+^{r_1 + r_2}$ define $\|T\|_{S} = \prod_{ \vv \, \in S } T_{\vv}^{N_{\vv}}$ and $F_{\infty}^{S} (T) = \big\{ x \in F_{\infty} : \|x\|_{\vv} > T_{\vv}^{N_\vv} \text{ for all } \vv \in S, \|x\|_{\vv} \leqslant T_{\vv}^{N_\vv} \text{ for all } \vv \in S_{\infty} \smallsetminus S \big\}$.
		Let $0 \leqslant c < 1 < d$.  Suppose that $\RN \fra \RN(T) \geqslant 1$. Then, for any $0 < \varepsilon < d - 1$,
		\begin{align*}
		\sum_{\sstyle \gamma \, \in \, \Fx   \cap F_{\infty}^{S} (T) \atop \sstyle    \gamma \fra \, \subset \frO } 
		\frac { |A_{\pi}(\gamma \fra)|} {|\RN \gamma|^{c} \|\gamma \|_S^{d-c} } & = O \lp \frac { (\RN \fra )^{1 + \varepsilon} \RN(T)^{1-c + \varepsilon} }  {\|T\|_S^{d-c} } \rp, \\
		\sum_{\sstyle \gamma \, \in \, \Fx   \cap F_{\infty}^{S} (T) \atop \sstyle  \gamma \fra \, \subset \frO }   
		\frac { |A_{\pi} (\fra', \gamma \fra) | } {|\RN \gamma|^{c} \|\gamma \|_S^{d-c}  } & = O \lp \frac { \RN \fra ' (\RN \fra )^{1 + \varepsilon} \RN(T)^{1-c + \varepsilon} }  {\|T\|_S^{d-c} } \rp,
		\end{align*}
		with the implied constants depending only on $\varepsilon$, $c$, $d$, $F$  and $\pi$.
	\end{lem}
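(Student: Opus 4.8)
The plan is to reduce the estimate to Lemma~\ref{lem: average over integers} by a dyadic decomposition of the region $F_\infty^S(T)$ in the places of $S$. For a multi-index $k = (k_\vv)_{\vv \in S} \in \BZ_{\geqslant 0}^{|S|}$, I would set
\[
B_k = \big\{ x \in F_\infty : 2^{k_\vv} T_\vv^{N_\vv} < \|x\|_\vv \leqslant 2^{k_\vv + 1} T_\vv^{N_\vv}\ \text{for}\ \vv \in S,\ \ \|x\|_\vv \leqslant T_\vv^{N_\vv}\ \text{for}\ \vv \in S_\infty \smallsetminus S \big\},
\]
so that $F_\infty^S(T) = \bigsqcup_{k} B_k$ is a disjoint union. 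Define $U = U(k) \in \BR_+^{r_1 + r_2}$ by $U_\vv^{N_\vv} = 2^{k_\vv + 1} T_\vv^{N_\vv}$ for $\vv \in S$ and $U_\vv = T_\vv$ for $\vv \in S_\infty \smallsetminus S$; then $B_k \subset F_\infty(U)$ and $\RN(U) = 2^{|S|} \big( \prod_{\vv \in S} 2^{k_\vv} \big) \RN(T)$. Moreover, for $x \in B_k$ we have $\|x\|_S > \big( \prod_{\vv \in S} 2^{k_\vv} \big) \|T\|_S$, hence, since $d - c > 0$,
\[
\|x\|_S^{-(d-c)} < \Big( \prod_{\vv \in S} 2^{k_\vv} \Big)^{-(d-c)} \|T\|_S^{-(d-c)}.
\]

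Next I would bound the contribution of a single block. Since $U_\vv \geqslant T_\vv$ for every $\vv$, the hypothesis $\RN \fra \, \RN(T) \ggg 1$ gives $\RN \fra \, \RN(U) \ggg 1$, so Lemma~\ref{lem: average over integers} applies with $U$ in place of $T$. Replacing the weight $\|\gamma\|_S^{-(d-c)}$ on $B_k$ by the constant bound above and then enlarging the range of summation from $B_k$ to $F_\infty(U)$ (all terms being nonnegative), I obtain
\[
\sum_{\substack{\gamma \, \in \, \Fx \, \cap \, B_k \\ \gamma \fra \, \subset \frO}} \frac{|A_\pi(\gamma\fra)|}{|\RN\gamma|^c \, \|\gamma\|_S^{d-c}} \ \lll\ \frac{(\RN\fra)^{1+\varepsilon} \, \RN(U)^{1-c+\varepsilon}}{\big( \prod_{\vv \in S} 2^{k_\vv} \big)^{d-c} \, \|T\|_S^{d-c}} \ \lll\ \frac{(\RN\fra)^{1+\varepsilon} \, \RN(T)^{1-c+\varepsilon}}{\|T\|_S^{d-c}} \Big( \prod_{\vv \in S} 2^{k_\vv} \Big)^{1 - d + \varepsilon},
\]
where in the last step I inserted $\RN(U) = 2^{|S|} \big( \prod_{\vv \in S} 2^{k_\vv} \big) \RN(T)$ and absorbed the harmless factor $2^{|S|(1-c+\varepsilon)} = O_{c, \, \varepsilon}(1)$ (recall $|S| \leqslant N$).

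Finally I would sum over $k$. Because $0 < \varepsilon < d - 1$, the exponent $1 - d + \varepsilon$ is strictly negative, so
\[
\sum_{k \, \in \, \BZ_{\geqslant 0}^{|S|}} \Big( \prod_{\vv \in S} 2^{k_\vv} \Big)^{1 - d + \varepsilon} = \prod_{\vv \in S} \sum_{k_\vv \geqslant 0} 2^{(1-d+\varepsilon)k_\vv} = \prod_{\vv \in S} \frac{1}{1 - 2^{\, 1 - d + \varepsilon}} = O_{d, \, \varepsilon}(1),
\]
the number of factors $|S|$ being at most $N$. Summing the per-block estimate over all $k$ then gives the first asserted bound, and the second follows in exactly the same manner, using the $\GL_3$ half of Lemma~\ref{lem: average over integers} and carrying the factor $\RN\fra'$ through unchanged. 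The only delicate point in the whole argument is the convergence of this last geometric series, which is precisely what the restriction $\varepsilon < d - 1$ is designed to ensure; no other obstacle arises.
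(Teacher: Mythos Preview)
Your proof is correct and follows essentially the same approach as the paper's own proof: a dyadic partition of $F_\infty^S(T)$ in the places of $S$, an application of Lemma~\ref{lem: average over integers} on each block, and summation of the resulting geometric series, with convergence ensured precisely by the hypothesis $\varepsilon < d-1$. The only cosmetic difference is that the paper scales the parameters $T_\vv$ themselves (so the dyadic steps in $\|x\|_\vv$ are by factors of $2^{N_\vv}$), whereas you scale $\|x\|_\vv$ directly by factors of $2$; this has no effect on the argument.
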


\begin{proof}
	For $j   \in \BN^{|S|}$ we define $T_j \in \BR_+^{r_1 + r_2} $ by $T_{j \, \vv} = 2^{j_\vv} T_\vv$ if $\vv \in S$ and $T_{j \, \vv} =  T_\vv $ if $ \vv \in S_{\infty} \smallsetminus S $. We introduce a dyadic partition of $F_{\infty}^{S} (T)$,
	\begin{align*}
	F_{\infty}^{S} (T) = \bigcup_{j \, \in \BN^{|S|} } F_{\infty}^{j} (T), 
	\end{align*}
	with $
	F_{\infty}^j (T) = \left\{ x \in F_\infty :   T_{j \, \vv}^{N_\vv} < \|x\|_{\vv} \leqslant 2^{N_{\vv}} T_{j \, \vv}^{N_\vv} \text{ if } \vv \in S, \|x\|_{\vv} \leqslant T_{\vv}^{N_\vv} \text{ if } \vv \in S_{\infty} \smallsetminus S 
	 \right\}.$  Then
	\begin{align*}
	\sum_{\sstyle \gamma \, \in \, \Fx   \cap F_{\infty}^{S} (T) \atop \sstyle    \gamma \fra \, \subset \frO } 
	\frac { |A_{\pi}(\gamma \fra)|} {|\RN \gamma|^{c} \|\gamma \|_S^{d-c} } & = \sum_{ j \,\in \BN^{|S|} } \sum_{\sstyle \gamma \, \in \, \Fx   \cap F_{\infty}^j (T) \atop \sstyle    \gamma \fra \, \subset \frO } 
	\frac { |A_{\pi}(\gamma \fra)|} {|\RN \gamma|^{c} \|\gamma \|_S^{d-c} } \\
	& \leqslant \sum_{ j \,\in \BN^{|S|} } 
	\frac 1 { \ \left \| T_j \right \|_S^{d-c} } 
	\sum_{\sstyle \gamma \, \in \, \Fx   \cap F_{\infty} (T_{j+1} ) \atop \sstyle    \gamma \fra \, \subset \frO } 
	\frac { |A_{\pi}(\gamma \fra)|} {|\RN \gamma|^{c}  } \\
	& \lll (\RN \fra)^{1 + \varepsilon} \sum_{ j \,\in \BN^{|S|} } 
	\frac {\RN(T_{j+1})^{1-c + \varepsilon}} { \ \left \| T_j \right \|_S^{d-c} }   \\
	& = \frac { (\RN \fra )^{1 + \varepsilon} \RN(T)^{1-c + \varepsilon} }  {\|T\|_S^{d-c} } \sum_{ j \,\in \BN^{|S|} } \frac {2^{(1-c+\varepsilon) \sum_{\vv \,\in S } N_{\vv }} } { 2^{(d-1-\varepsilon) \sum_{\vv \,\in S } N_{\vv } j_\vv } } \\
	& \lll \frac { (\RN \fra )^{1 + \varepsilon} \RN(T)^{1-c +\varepsilon} }  {\|T\|_S^{d-c} } ,
	\end{align*}  
	where Lemma \ref{lem: average over integers, 2} is applied for the second inequality. This proves the first estimate and the second follows in the same way.
\end{proof}

\section{Asymptotic of Bessel Kernels and Estimates for Hankel Transforms}\label{sec: asymptotic}

In this section, we shall exploit the analytic results on the asymptotic behaviours of Bessel kernels in \cite{Qi-Bessel}  to study Hankel transforms, which arise as the analytic component of the \Voronoi summation formula.

Throughout this section, we shall suppress $\vv$ from our notations. Accordingly, $F$ will be an archimedean local field, and $\pi$ will be a unitary irreducible admissible  generic representation of $\GL_r (F)$. 
With $\pi$ are associated certain embedding parameters $(\umu, \udelta) \in \BC^{r } \times (\BZ/2\BZ)^r$ if $F = \BR$ or  $(\umu, \um) \in \BC^{r} \times \BZ^r$ if $F = \BC$. Unitaricity implies that $\umu \in \BL^{r-1} = \left\{\ulambda \in \BC^r : \sum_{l      = 1}^r \lambda_l      = 0 \right \}$. With the notations of \cite{Qi-Bessel}, we shall denote the Bessel kernel   by $J_{(\umu, \udelta)} (x)$ or $J_{(\umu, \um)} (z)$. Here and henceforth, $x$, $y$ will always stand for real variables,  while $z$, $u$ for complex variables.

When $r = 2$, our Bessel kernels for $\GL_2 (F)$ may be expressed in terms of classical Bessel functions. For illustration purposes, let us  take the   simplest examples for $\GL_2 (\BR)$ and $\GL_2 (\BC)$. For the discrete series representation $\sigma (m)$ of $\GL_2 (\BR)$, its associated Bessel kernel is equal to $2 \pi i^{m+1} J_{m} \lp 4 \pi \sqrt x \rp$ on $  \BR_+$ and vanishes identically on $- \BR_+$. 
For the {spherical} principal series representation $\pi^+_0 (\mu)$ of $\GL_2 (\BC)$, 
the   Bessel function reads
\begin{align*}
\frac {2 \pi^2} {\sin (2\pi \mu)}  \lp J_{- 2\mu   } \lp 4 \pi \sqrt z \rp J_{- 2\mu    } \big( 4 \pi \sqrt {\overline z} \big)    - J_{ 2\mu   } \lp 4 \pi \sqrt z \rp J_{ 2\mu }  \big( 4 \pi \sqrt {\overline z} \big)     \rp.
\end{align*}
This expression is well-defined and   should be in its limit form when  $ \mu $ is a half integer.
Here   $J_{\nu} (z)$ is the Bessel function of the first kind of order $\nu$ as usual. For a complete list of the formulae of  the Bessel kernels for all infinite dimensional irreducible representations of $\GL_2 (\BR)$ and $\GL_2 (\BC)$, see \cite[\S 4.3, 15.3]{Qi-Bessel} and also \cite[\S 18]{Qi-Bessel}.

\subsection{Bounds for Bessel Kernels near Zero}\label{sec: Bessel, zero}

The asymptotic behaviour of the Bessel kernel $J_{\pi}$ near zero is very close to its associated parameters, so it is worthwhile to first describe them in more details.

When $F=\BR$, there are essentially two types of representations of $\GL_r (\BR)$ (see the discussions in \cite[\S 2]{Miller-Wilton}).   First, $\pi$ may be a fully induced principal series representation, in which case $|\Re \mu_l| < \frac 1 2$ for all $l = 1, ..., r$. Second,
when $r=2$,  $\pi$ is the discrete series $\sigma (m)$ of $\GL_2 (\BR)$, $m = 1, 2, ...$, or, when $r=3$, $\pi$ is an induced representation of $\GL_3(\BR)$ constructed from a discrete series  $\sigma (m)$ of $\GL_2 (\BR)$; in the former case one may choose $$\mu_1 =  \frac m 2, \ \mu_2 = - \frac m 2, \hskip 10 pt \delta_1+ \delta_2 \equiv m+1 (\mod 2),$$
and in the latter, $$\mu_1 =   it + \frac m 2, \ \mu_2 = -2it, \ \mu_3 = it - \frac m 2, \hskip 10 pt \delta_1+ \delta_3 \equiv \delta_2 \equiv m+1 (\mod 2).$$

When $F=\BC$, it is known that $\pi$ must be a fully induced principal series representation and $|\Re \mu_l| < \frac 1 2$ for all $l = 1, ..., r$.

\begin{lem}\label{lem: bound, at zero}
	Let  
	$C >0$. When $F = \BR$, for   $x \in \BR^{\times}$ with $ |x| < C$, we have
	\begin{align*}
	  J_{(\umu, \udelta)} \lp  x \rp  \lll |x|^{-   1 / 2 }.
	\end{align*}
	Similarly, when   $F = \BC$, for  $z \in \BC^{\times}$ with $ |z| < C$, we have
	\begin{align*}
	  J_{(\umu, \um)} \lp  z \rp  \lll |z|^{- 1  }.
	\end{align*}
	The implied constants above depend  only on $ C$  and the parameters $(\umu, \udelta)$ or $(\umu, \um)$. 
\end{lem}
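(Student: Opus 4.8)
The plan is to derive this crude uniform bound from the explicit local description of the Bessel kernel near the origin given in \cite{Qi-Thesis}. First I would note that $J_{\pi}$ is a smooth function on $F^{\times}$, hence continuous and bounded on the compact annulus $\{1 \leqslant |x| \leqslant C\}$; since there $|x|^{-1/2} \geqslant C^{-1/2}$ (and likewise $|z|^{-1} \geqslant C^{-1}$ on $\{1 \leqslant |z| \leqslant C\}$ when $F = \BC$), the asserted inequality is automatic on that range, and it remains only to bound $J_{\pi}$ on $\{0 < |x| \leqslant 1\}$, where its behaviour is governed by the expansion at zero.

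Next I would treat the principal series cases, i.e.\ $F = \BR$ with $\pi$ fully induced and $F = \BC$ (always principal series). In these cases the zero-expansion recorded in \cite{Qi-Thesis} writes $J_{(\umu,\udelta)}(x)$, resp.\ $J_{(\umu,\um)}(z)$, as a finite sum --- over $l = 1, \dots, r$ and over the sign, resp.\ rotation, characters --- of terms $c_{l}\, |x|^{\mu_{l}}\, E_{l}(x)$, resp.\ $c_{l}\, |z|^{2\mu_{l}}\, E_{l}(z)$, where $E_{l}$ is entire, hence bounded on $\{|x| \leqslant 1\}$ (in the complex case it carries the angular factor of modulus one), together possibly with bounded powers of $\log|x|$, resp.\ $\log|z|$, in the resonant cases where some $\mu_{l} - \mu_{l'} \in \BZ$. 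Since $\pi$ is unitary, $|\Re\mu_{l}| < \tfrac12$, so $\Re\mu_{l} + \tfrac12 > 0$ and $2\Re\mu_{l} + 1 > 0$; hence on $\{0 < |x| \leqslant 1\}$ one has $|x|^{\Re\mu_{l}}(\log|x|)^{k} \lll |x|^{-1/2}$ and $|z|^{2\Re\mu_{l}}(\log|z|)^{k} \lll |z|^{-1}$ for any fixed $k$. Summing the finitely many terms yields $J_{(\umu,\udelta)}(x) \lll |x|^{-1/2}$ and $J_{(\umu,\um)}(z) \lll |z|^{-1}$.

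What is left is $F = \BR$ with $\pi$ attached to a discrete series $\sigma(m)$: on $\GL_{2}(\BR)$ with $\umu = (m/2, -m/2)$, and on $\GL_{3}(\BR)$ with $\umu = (it + m/2,\ -2it,\ it - m/2)$. Here the exponent $-m/2$ among the $\mu_{l}$ would, if it occurred, violate the bound; but I would invoke that in these cases \cite{Qi-Thesis} expresses $J_{\pi}$ through classical $J$-Bessel functions of the large positive orders attached to $m$ (equivalently, only the solutions of the rank-$r$ Bessel equation with non-negative leading exponent at zero actually enter $J_{\pi}$). Therefore $J_{\pi}(x) \to 0$ as $x \to 0$, and in particular $J_{\pi}(x) \lll 1 \lll |x|^{-1/2}$ on $\{0 < |x| \leqslant 1\}$.

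The step I expect to require the most care is the appeal in the last two paragraphs to the precise shape of the zero-expansion --- the log-power corrections in the resonant principal series cases, and the determination of which solutions of the Bessel differential equation the kernel actually is in the discrete-series cases. These are exactly what the local analysis of \cite{Qi-Thesis} supplies; granting them, the lemma reduces to inserting the inequalities $|\Re\mu_{l}| < \tfrac12$ as above, together with the elementary observation at the start that $J_{\pi}$ is bounded on the compact annulus $\{1 \leqslant |x| \leqslant C\}$, which gives the uniformity of the implied constant over the whole range $|x| < C$.
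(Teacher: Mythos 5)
Your argument is correct in substance but takes a different route from the paper. The paper never writes down the local expansion of the kernel: it works directly with the Mellin--Barnes integrals $j_{(\umu,\udelta)}(x)$ and $j_{(\umu,\um+m\ue)}(x)$, observes that the gamma factors $G_{(\umu,\udelta)}(s)$, $G_{(\umu,\udelta+\ue)}(s)$ have no poles in $\Re s \geqslant \tfrac12$ (for the discrete-series cases this is seen by collapsing the product via the duplication and reflection formulae, which shows pole-freeness even in $\Re s > -\tfrac12 m$), shifts the contour into $\Re s \leqslant \tfrac12$, and bounds the integral trivially, giving $|x|^{-1/2}$, resp.\ $|z|^{-1}$, at once. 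Your proposal is the residue-side version of the same information: the poles of the integrand are exactly the exponents of your ascending-power expansion, and $|\Re\mu_l|<\tfrac12$ for principal series (resp.\ positivity of the leading exponent in the discrete-series cases) is equivalent to the pole-location statement the paper uses. The paper's formulation buys two things that your sketch has to pay for separately. First, resonant configurations ($\mu_l-\mu_{l'}\in\BZ$) and their logarithmic corrections are handled automatically, whereas you must track them explicitly (you do note this, and the strict inequality $|\Re\mu_l|<\tfrac12$ absorbs any fixed power of $\log$, so no harm done). Second, and more importantly, in the complex case $J_{(\umu,\um)}(z)$ is \emph{defined} as an infinite Fourier series $\frac{1}{2\pi}\sum_{m} j_{(\umu,\um+m\ue)}(x)e^{im\phi}$; the paper's real work there is truncating at $|m|=M=\max\{|m_l|\}$ and controlling the tail with the decay estimate of \cite[Lemma 2.4.13]{Qi-Thesis}. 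Your appeal to a finite closed-form local expansion of the two-variable kernel presupposes that this resummation has already been carried out in \cite{Qi-Thesis}; that is available there, but it is precisely the step your sketch treats as a black box, so if you write this up you should either cite the finite expansion explicitly or reproduce the Fourier-tail estimate. Your preliminary reduction to $0<|x|\leqslant 1$ via boundedness on the compact annulus is harmless and consistent with the paper's uniformity claim.
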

We shall prove this lemma by bounding the Mellin-Barnes type integrals of certain gamma factors as in \cite[\S 1.1, 3]{Qi-Bessel}. 

\begin{rem}
	When  the parameters are generic in the sense that all the poles of the gamma factors are simple,  sharper bounds  may be easily seen from the power series expansions of the Bessel functions, which are obtained from shifting the integral contours in  the Mellin-Barnes type integrals to the far left and collecting the residues. In deed, one has  
	\begin{align*}
J_{(\umu, \udelta)} \lp  x \rp \lll \max \left\{ \left| |x|^{\,\mu_{l} } \right| \right\}, \hskip 10 pt J_{(\umu, \um)} \lp  z \rp \lll \max \left\{ \left| |z|^{2\mu_{l} } \right| \right\}.
	\end{align*} In the non-generic cases, with some work, it may be shown that only powers of $\log |x|$ or $\log |z|$ will be lost. For more details, see \cite[\S 11.1, 11.2, 15.1]{Qi-Bessel}.
\end{rem}


\subsubsection{Bounds for Real Bessel Kernels near Zero}\label{sec: bound at zero, real}
Suppose $F=\BR$. According to \cite[\S 1.1, 3]{Qi-Bessel}, let
\begin{equation*}
G_\delta (s) = i^\delta \pi^{ \frac 1 2 - s} \frac {\Gamma \lp \frac 1 2 ({s + \delta} ) \rp} {\Gamma \lp \frac 1 2 ({1 - s + \delta} ) \rp} = 
\left\{ \begin{split}
& 2(2 \pi)^{-s} \Gamma (s) \cos \left(\frac {\pi s} 2 \right), \hskip 10pt \text { if } \delta = 0,\\
& 2 i (2 \pi)^{-s} \Gamma (s) \sin  \left(\frac {\pi s} 2 \right), \hskip 9 pt \text { if } \delta = 1,
\end{split} \right.
\end{equation*}
\begin{equation*}
G_{(\umu, \udelta) } (s) = \prod_{l      = 1}^r G_{\delta_{l     } } (s - \mu_l     ),
\end{equation*}
and  define
\begin{align*}
j_{(\umu, \udelta)} (x) =  \frac 1  {2 \pi i} \int_{\EC_{(\umu, \udelta)}} G_{(\umu, \udelta)} (s ) x^{- s} d s, \hskip 10 pt x > 0,
\end{align*}
where $\EC_{(\umu, \udelta)}$ is a curve obtained from a vertical line on  the right of  all the poles of $G_{(\umu, \udelta)} (s )$ with two ends at infinity shifted to the left, say, to $\Re s = - 1$ (see \cite[Definition 3.2]{Qi-Bessel}); the shift is needed only to secure convergence.
The Bessel kernel $J_{\pi} = J_{(\umu, \udelta)}$ is defined as
\begin{equation*}
\begin{split}
J_{(\umu, \udelta)} \lp \pm x \rp = \frac 1 2 \lp j_{(\umu, \udelta)} (x) \pm j_{(\umu, \udelta + \ue)} (x) \rp,
\end{split}
\end{equation*}
where $\ue = (1, 1)$ if $r =2$ and $\ue = (1,1,1)$ if $r=3$.

We remark that both $G_{(\umu, \udelta) } (s)$ and $ G_{(\umu, \udelta + \ue) } (s) $ have no poles in the half plane $\Re s \geqslant \frac 1 2$. This is clear when $\pi$ is a principal series. When $\pi$ is the discrete series $\sigma (m)$ of $\GL_2 (\BR)$, by the duplication formula and Euler's reflection formula of the Gamma
function, both $G_{(\umu, \udelta) } (s)$ and $ G_{(\umu, \udelta + \ue) } (s)$ are equal to
\begin{align*}
i^{m + 1} (2\pi)^{1-2 s } \frac { \Gamma \lp s + \frac 1 2{m}   \rp} { \Gamma \lp 1 - s + \frac 1 2{m}   \rp },
\end{align*}
which has no poles even in the larger region $\Re s > - \frac 1 2 m$. We are in a similar situation, if $\pi$ is a representation of $\GL_3 (\BR)$ coming from  the discrete series $\sigma (m)$ of $\GL_2 (\BR)$. Therefore, we may and do choose the contours $ \EC_{(\umu, \udelta)} $ and $ \EC_{(\umu, \udelta+\ue)} $ to be contained in the  half plane $\Re s \leqslant \frac 1 2$. 
Then, some simple estimations by Stirling's formula of the  integrals that define $j_{(\umu, \udelta)} (x)$ and $j_{(\umu, \udelta + \ue)} (x)$ yield the bound for $ J_{(\umu, \udelta)} \lp \pm x \rp $ in Lemma \ref{lem: bound, at zero}. 

\vskip 5 pt

\subsubsection{Bounds for Complex Bessel Kernels near Zero}\label{sec: zero complex}

Now let $F = \BC$. Define 
\begin{equation*}
G_m (s) = i^{|m| } (2\pi)^{1-2 s } \frac { \Gamma \lp s + \frac 1 2{|m|}   \rp} { \Gamma \lp 1 - s + \frac 1 2{|m|}   \rp },
\end{equation*}
\begin{equation*}
G _{(\umu, \um)} (s) = \prod_{l      = 1}^r G_{m_{l     } } (s - \mu_l     ),
\end{equation*}
and
\begin{equation*}
j_{(\umu, \um)} (x) = \frac 1  {2 \pi i} \int_{\EC _{ (\umu, \um)}} G_{(\umu, \um)} (s ) x^{- 2 s} d s,  \hskip 10 pt x > 0,
\end{equation*}
where   $    \EC _{ (\umu, \um)}$ 
is defined similarly as $\EC _{ (\umu, \udelta)}$ such that all the poles of $G_{(\umu, \um)} (s )$ stay on the left (see \cite[Definition 3.2]{Qi-Bessel}). We define the Bessel kernel $J_{\pi} = J_{(\umu, \um)}$  in the polar coordinates,
\begin{equation*}
J_{(\umu, \um)} \lp x e^{i \phi} \rp =  \frac 1 {2 \pi} \sum_{m = -\infty}^{\infty} j_{(\umu, \um + m\ue )} (x ) e^{ i m \phi}.
\end{equation*}

Now assume that $\max\{|\Re \mu_l | \} < \frac 1 2$ and set $M = \max \{|m_l|\}$.  Let $ x < C$. We truncate the Fourier series at $|m| = M$.  When $|m| > M$, the estimate in \cite[Lemma 3.10]{Qi-Bessel} by Stirling's formula yields
\begin{align*}
j_{(\umu, \um + m\ue )} (x ) \lll \lp \frac { (2 \pi e)^{|m|}  } {(|m| + 1)^{|m| - M}} \rp^{r }     x^{ |m| - M - 1}.
\end{align*}
 When $|m| \leqslant M$, we choose the contours $ \EC_{(\umu, \udelta + m \ue)} $  in the half plane $\Re s \leqslant \frac 1 2$, so  
\begin{align*}
j_{(\umu, \um + m\ue )} (x ) \lll  x^{ - 1} .
\end{align*}
Combining these estimates, we obtain
\begin{align*}
J_{(\umu, \um)} \lp x e^{i \phi} \rp \lll x^{-1}.
\end{align*}
All the implied constants above depend only on $C$ and $(\umu, \um)$.  \delete{ This proves Lemma \ref{lem: bound, at zero} for $j = j' = 0$.
In general, we may apply similar arguments to show that
\begin{align*}
(\partial/\partial x)^j (\partial/\partial \phi)^k J_{(\umu, \um)} \lp x e^{i \phi} \rp \lll x^{-1 - j},
\end{align*}
and then establish the bounds for $ J_{(\umu, \um)} $ in the Cartesian coordinates.}

\subsection{Asymptotic of Bessel Kernels at Infinity}\label{sec: Bessel, infinity}

We now collect some results  from \cite{Qi-Bessel} on the asymptotic of Bessel kernels at infinity. See \cite[Theorem 14.1, 16.6,  16.7]{Qi-Bessel}. 

\begin{prop}\label{6prop: asymptotic} 
	Let $K \geqslant 0$.
	
	When $F = \BR$, 
	for $x > 0$, we may write  
	\begin{align*}
	& J_{(\umu, \udelta)} \left(x^2 \right)   =  \sum_{\pm}  \frac { { e \lp   \pm  2 x   \rp }} {  x^{1/2} }   W_{ (\umu, \udelta) }^{\pm} (x) + E^+_{(\umu, \udelta)} (x), \\
	& J_{(\umu, \udelta)} \left( - x^2 \right)   = E^-_{(\umu, \udelta)} (x),  
	\end{align*}
	if $r = 2$,
	\begin{align*}
	J_{(\umu, \udelta)} \left(\pm x^3 \right) & =     \frac { { e \lp   \pm  3 x   \rp }}  x  W_{ (\umu, \udelta) }^{\pm} (x) + E^\pm_{(\umu, \udelta)} (x),   
	\end{align*}
	if $r = 3$, and there is a constant $C_{K, \, \umu}$, depending only on $K$ and $\umu$, such that for $x \geqslant C_{K, \, \umu }$, we have
	\begin{align}\label{6eq: asymptotic of W, real}
	W_{ (\umu, \udelta) }^{\pm } (x) =    \sum_{k= 0}^{K-1}  B^{\pm}_{k }   x^{-   k  }  
	+ O_{K, \,\umu } \left(  x^{- K  }\right), 
	\end{align} 
	with the coefficients $B_k^{\pm}$ depending only on $ (\umu, \udelta)$, and
	\begin{equation*}
	E^{\pm }_{(\umu, \udelta)} \left( x \right) =  O_{ \umu } \big( x^{-  ( {r-1} ) / {2 } } \exp \lp {  - 2 \pi r \sin \lp   \pi / r \rp  x } \rp \big).
	\end{equation*}

	Define the sectors
	\begin{align*}
	\BS_2  & =   
	\left\{ z \in \BCx : -  \tfrac 1 2 \pi \leqslant \arg z < \tfrac 1 2  \pi   \right\} , \\
	\BS_3  & = \left\{ z \in \BCx : - \tfrac 2 3 \pi  \leqslant \arg z < 0   \right\} .
	\end{align*} When $F = \BC$, we may write
	\begin{align*}
	J _{(\umu, \um)} \left(z^r \right) = \sum_{\xi^r = 1} \frac { e \big( r \big( \xi z +   \overline {\xi z } \big) \big) }   {  |z|^{r-1} [\xi z]^{|\um|} } W_{(\umu, \um)} \lp  z, \xi    \rp + E_{(\umu, \um)} (z),
	\end{align*} 
	with   notations $[z] = z/ |z|$ and $|\um| = \sum_{l=1}^r m_l$,
	 and there is a constant $C_{K, \,(\umu, \um)}$, depending only on $K$ and $(\umu, \um)$, such that, for  $z \in  \BS_r  $ with $|z| \geqslant C_{K, \,(\umu, \um)} $, we have 
	\begin{align}\label{6eq: asymptotic of W, complex}
	W_{(\umu, \um)} \lp z , \xi \rp =    
	\mathop{\sum\sum}_{\substack{ k,\, k' = 0, ..., K-1 \\ k + k' \leqslant K-1 }} B_{k, k'}   \xi^{- k + k'}   z^{- k } \overline z^{- k' }  + O_{K, (\umu, \um)} \lp |z|^{- K } \rp ,
	\end{align} 
	with the coefficients $B_{k, k'}$ depending only on $ (\umu, \um)$, 
	\begin{align*}
	E_{ (\umu, \um) } (z) = 0,
	\end{align*} 
	if $r = 2$, and
	\begin{align*}
	E_{ (\umu, \um) } (z) = O_{(\umu, \um)} \lp |z|^{- 2} \exp \lp  - 12 \pi   \sin \lp \tfrac 1 3 \pi \rp \sin   \lp \tfrac 1 {12} \pi \rp |z| \rp \rp,
	\end{align*} 
	if $r = 3$.
\end{prop}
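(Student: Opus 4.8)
The statement collects, for $r = 2, 3$, the asymptotic formulae for $\GL_r$ Bessel kernels proved in \cite{Qi-Thesis}, so I would obtain it by combining \cite[Theorem 1.5.13]{Qi-Thesis} (the real case), \cite[Theorem 1.9.3]{Qi-Thesis} (the complex case), and \cite[Theorem 2.8.7]{Qi-Thesis}, which identifies the analytically defined kernels $J_{(\umu, \udelta)}$, $J_{(\umu, \um)}$ of \S\ref{sec: Bessel, zero} with the Bessel kernel $J_{\pi}$ attached to $\pi$. The plan is: first recall the Mellin--Barnes definitions of \S\ref{sec: Bessel, zero}; then quote the relevant statements of \cite{Qi-Thesis}; and finally check that, under the normalizations fixed in \S 2 and \S\ref{sec: Bessel, zero} (the additive character $\psi_{\infty}$, the chosen Haar measures, and the substitution $x \mapsto x^r$), the main terms, the coefficients $B_k^{\pm}$, $B_{k, k'}$, and the error terms come out in the stated shape.

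For $F = \BR$, the key input is that $j_{(\umu, \udelta)}(x)$ is, up to the explicit gamma factors recorded in \S\ref{sec: bound at zero, real}, a Mellin--Barnes integral of ${}_{0}F_{r-1}$-type; so I would shift the contour $\EC_{(\umu, \udelta)}$ far to the left and apply the steepest-descent analysis of \cite{Qi-Thesis}, which produces exactly $r$ saddle points. In the substituted variable these contribute the oscillatory terms $e(\pm 2x)\,x^{-1/2}$ (if $r = 2$) and $e(\pm 3x)\,x^{-1}$ (if $r = 3$), and expanding the integrand around each saddle yields the series \eqref{6eq: asymptotic of W, real}. The fact that $J_{(\umu,\udelta)}(-x^2)$ for $r = 2$ --- and the ``missing'' signs for $r = 3$ --- are only exponentially small reflects that on those arguments the dominant saddle leaves the real axis; tracking the imaginary part of the phase there gives the decay exponent $2\pi r \sin(\pi/r)$ (for $r = 2$ this is just the familiar exponential decay of the $K$-Bessel function at rate $4\pi$).

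For $F = \BC$, I would instead work from the Fourier expansion $J_{(\umu, \um)}\big(x e^{i\phi}\big) = (2\pi)^{-1}\sum_m j_{(\umu, \um + m\ue)}(x)\,e^{im\phi}$ of \S\ref{sec: Bessel, zero}: summing the saddle-point asymptotics of the individual $j_{(\umu, \um + m\ue)}(x)$ over the range $|m| \lesssim x$, discarding the negligible tail, and resumming the Fourier series (with the necessary uniformity in $m$ supplied by \cite{Qi-Thesis}) produces oscillation in the directions of the $r$-th roots of unity $\xi$, i.e.\ the phase $e\big(r(\xi z + \overline{\xi z})\big)$, the angular prefactor $[\xi z]^{-|\um|}$, and the double series \eqref{6eq: asymptotic of W, complex}. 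The sectors $\BS_2$, $\BS_3$ are exactly the maximal ranges of $\arg z$ over which this resummation is valid uniformly; at their boundaries two of the $r$ exponentials trade dominance (a Stokes phenomenon), and on the remaining sectors one recovers $J_{(\umu,\um)}$ by its rotation symmetry. The error term vanishes identically for $r = 2$, while for $r = 3$ I would bound it by evaluating the decay rate of the subdominant exponential at the worst direction of $\BS_3$, which is what produces the constant $12\pi \sin(\tfrac13\pi)\sin(\tfrac1{12}\pi)$.

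The bulk of the argument is bookkeeping --- matching the normalization of \cite{Qi-Thesis} to the conventions of \S 2 and \S\ref{sec: Bessel, zero}, carrying out the two case splits $r = 2, 3$ cleanly, and confirming that the listed coefficients and decay exponents are exactly those in \cite[Theorem 1.5.13, 1.9.3]{Qi-Thesis}. The one genuinely delicate point, rather than a citation, is the exponential error bound in the complex $r = 3$ case: one must locate the dominant subdominant saddle as $\arg z$ ranges over $\BS_3$ and verify that its contribution never exceeds $O\big(|z|^{-2}\exp(-12\pi\sin(\tfrac13\pi)\sin(\tfrac1{12}\pi)|z|)\big)$, and that is where I expect the main effort to go.
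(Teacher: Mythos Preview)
Your proposal is correct and matches the paper's approach exactly: the paper does not prove this proposition at all but simply cites \cite[Theorem 1.5.13, 1.9.3, 2.8.7]{Qi-Thesis}, which are precisely the three results you identify. Your additional sketch of the steepest-descent and Fourier-resummation arguments behind those theorems goes well beyond what the paper provides, but it is consistent with the content of \cite{Qi-Thesis} and not needed here.
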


While Stirling's asymptotic formula may be applied to  the Mellin-Barnes type integrals in \S \ref{sec: bound at zero, real} to prove the asymptotic formula for $J_{(\umu, \udelta)} (x)$ as in   Proposition \ref{6prop: asymptotic} (see \cite{XLi}, \cite{Blomer} and \cite[Appendix B]{Qi-Bessel}), this method does not work for $J_{(\umu, \um)} (z)$ because it is  an {\it infinite}   series  of Mellin-Barnes type integrals as one has seen in \S \ref{sec: zero complex}. In \cite{Qi-Bessel}, the author finds two other approaches to   the asymptotic formulae for the {\it analytic continuations} of  certain Bessel functions initially defined on $\BR_+$; they also yield the first part of  Proposition \ref{6prop: asymptotic}. The first is to apply stationary phase to the {\it formal} integral representation of   Bessel functions (this method is even new for classical Bessel functions). The second is to apply the asymptotic theory of differential equations to the Bessel equations. As a result, one obtains explicit connection  formulae between the two kinds of Bessel functions associated with the two singularities $0$ and $\infty$ of the Bessel equations. In order to study  $J_{(\umu, \um)} (z)$, one may establish  two formulae that express $J_{(\umu, \um)} (z)$ in terms of the two kinds of  Bessel functions respectively; their  connection  formulae play a vital role here in the establishment. Finally, the asymptotic formula for  $J_{(\umu, \um)} (z)$   as in Proposition \ref{6prop: asymptotic} follows from the second formula along with the asymptotic formula derived from the theory of differential equations. Readers are referred to \cite[Chapter 2, 3]{Qi-Bessel} for more details.

\subsection{Estimates for Hankel Transforms}

Recall that Hankel transforms are defined as
\begin{align}\label{6eq: Hankel}
\widetilde f (y) = \int_{\BR} J_{(\umu, \udelta)} (x y) f (x) d x, \hskip 10 pt \widetilde f (u) = \sideset{}{_{\,\BC}}{\iint}  J_{(\umu, \um)} ( z u) f (z)  \, i d z \nwedge d \overline z.
\end{align}
For a     weight function $w  $ given as in Theorem \ref{thm: main, smooth} and a scalar $\rho \in F$, our choice of the test function will be
\begin{align} 
\label{6eq: choice of f, real} & f  (x) = w  (x) e \lp - \rho  x \rp, \hskip 28 pt \text{ if } F  = \BR,\\
\label{6eq: choice of f, complex} & f  (z) = w  (z) e \lp  - \rho  z - \overline \rho \overline z \rp, \hskip 10 pt \text{ if } F = \BC.
\end{align}


\begin{lem} \label{lem: bound for Hankel, 1}
	Let $C > 0$.  Let $y \in \BR^{\times}$ and $u \in \BCx$. Suppose that  $ 0 < T |y|, \, T |u| \leqslant C $.
	Then,  
	\begin{align*}
	\widetilde f (y) \lll_{ C,\, \umu,\, \varDelta}   T^{1/2}   |y| ^{- 1/2},
	\end{align*} 
	if  $F = \BR$, and
	\begin{align*}
	\widetilde f (u) \lll_{ C, (\umu, \um), \, \varDelta}   T    |u| ^{- 1 },
	\end{align*} 
	if  $F = \BC$. 
\end{lem}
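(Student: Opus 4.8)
The plan is to estimate the Hankel transform $\widetilde f(y)$ (resp.\ $\widetilde f(u)$) directly from its defining integral \eqref{6eq: Hankel} by inserting the bound for the Bessel kernel near zero from Lemma \ref{lem: bound, at zero}. Since $T|y| \leqslant C$ and the test function $f$ is supported where $|x| \in [T, \varDelta T]$, the argument $xy$ of the Bessel kernel ranges over $|xy| \leqslant \varDelta C$, so we are genuinely in the ``near zero'' regime and Lemma \ref{lem: bound, at zero} applies with constant $\varDelta C$.

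\emph{Real case.} First I would write
\begin{align*}
\widetilde f (y) = \int_{\BR} J_{(\umu, \udelta)} (x y) w (x) e(-\rho x) \, d x,
\end{align*}
and simply bound the integrand in absolute value. By Lemma \ref{lem: bound, at zero}, $J_{(\umu, \udelta)}(xy) \lll_{C, \umu} |xy|^{-1/2}$ on the support of $w$; the weight satisfies $w(x) \lll_{\varDelta} 1$ by \eqref{0eq: weight, real} with $j = 0$; and $|e(-\rho x)| = 1$. The support has measure $\lll \varDelta T$ and on it $|x| \asymp T$, so
\begin{align*}
\widetilde f (y) \lll_{C, \umu, \varDelta} \int_{T \leqslant |x| \leqslant \varDelta T} |x|^{-1/2} |y|^{-1/2} \, d x \lll T \cdot T^{-1/2} |y|^{-1/2} = T^{1/2} |y|^{-1/2},
\end{align*}
which is the claimed bound.

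\emph{Complex case.} The argument is entirely parallel: on the support of $w$ one has $|zu| \leqslant \varDelta C$, so Lemma \ref{lem: bound, at zero} gives $J_{(\umu, \um)}(zu) \lll_{C, (\umu, \um)} |zu|^{-1} = |z|^{-1}|u|^{-1}$, while $w(z) \lll_{\varDelta} 1$ by \eqref{0eq: weight, complex} and $|e(-\rho z - \overline\rho \overline z)| = 1$. The measure $i\, dz \wedge d\overline z$ is (twice) Lebesgue measure on the annulus $|z| \in [T, \varDelta T]$, whose area is $\lll \varDelta^2 T^2$, and on it $|z|^{-1} \asymp T^{-1}$. Hence
\begin{align*}
\widetilde f (u) \lll_{C, (\umu, \um), \varDelta} T^2 \cdot T^{-1} |u|^{-1} = T |u|^{-1},
\end{align*}
as desired.

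There is essentially no obstacle here: this is the ``trivial'' estimate for the Hankel transform valid in the range where the dual variable $y$ (resp.\ $u$) is small, and it does not require any oscillatory cancellation or the asymptotic expansions of Proposition \ref{6prop: asymptotic} — only the crude size bound of Lemma \ref{lem: bound, at zero} together with the compact support and boundedness of $w$. The one point to be careful about is making sure the implied constant in Lemma \ref{lem: bound, at zero} is invoked with the correct value of $C$ (namely $\varDelta C$ rather than $C$), which only changes the dependence of the final implied constant but not its existence; since $\varDelta$ is already among the allowed parameters this is harmless. The real work of the paper — exploiting cancellation via stationary phase — enters only in the complementary range $T|y| \gg 1$, treated in subsequent lemmas.
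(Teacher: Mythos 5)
Your proof is correct and is exactly the argument the paper intends: the paper's own proof consists of the single remark that the lemma ``immediately follows from Lemma \ref{lem: bound, at zero}'', i.e.\ the trivial estimate obtained by inserting the near-zero Bessel-kernel bound into the defining integral over the support of $w$. Your attention to invoking Lemma \ref{lem: bound, at zero} with the constant $\varDelta C$ rather than $C$ is a correct and harmless refinement.
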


\begin{proof}
	This lemma immediately follows from Lemma \ref{lem: bound, at zero}.
\end{proof}

Next, we consider $\widetilde f $ of large argument. For this we have two lemmas for the cases $T |\rho| \ggg 1$ and $ T|\rho | \lll 1$ separately.

\begin{lem} \label{lem: bound for Hankel, 2}
	Let $K \geqslant 0$. Put $C = C_{K,\, \umu}$ or $C_{2 K, (\umu, \um)}$ as in Proposition {\rm\ref{6prop: asymptotic} }. Let $y \in \BR^{\times}$ and $u \in \BCx$.  Suppose that $T |\rho| $ is sufficiently large, say $ T |\rho|  > \varDelta^{r-1}C^{1/r}$, and that $  T |y|, \, T |u| > C  $. 
	
	When $F=\BR$, we have 
	\begin{align*}
	\widetilde f (y) \lll_{ \umu, \, \varDelta}  T^{1/2}   |y|^{- 1/2},
	\end{align*} 
	if  $(T |\rho|)^{r } /\varDelta^{r (r-1)}  \leqslant T |y| \leqslant \varDelta^{r(r-1)} (T |\rho|)^{r }  $, and
	\begin{align*}
	\widetilde f (y) \lll_{K,\, \umu, \, \varDelta}    {T }   {\lp T |y| \rp^{-(r-1)/2r - K/ r} },
	\end{align*} 
	if otherwise.
	
	When $F=\BC$, we have 
	\begin{align*}
	\widetilde f (u) \lll_{ (\umu, \um), \, \varDelta}  T    |u|^{- 1 },
	\end{align*} 
	if  $(T |\rho|)^{r } /\varDelta^{r (r-1)}  \leqslant T |u| \leqslant \varDelta^{r(r-1)} (T |\rho|)^{r }  $, and
	\begin{align*}
	\widetilde f (u) \lll_{K,\, (\umu, \um),\, \varDelta}    {T^2 }   {\lp T |u| \rp^{-(r-1)/ r - 2 K/ r} },
	\end{align*} 
	if otherwise.
\end{lem}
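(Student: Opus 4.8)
The plan is to substitute the asymptotic expansion of the Bessel kernel from Proposition \ref{6prop: asymptotic} into the Hankel transform \eqref{6eq: Hankel} with the test function \eqref{6eq: choice of f, real} or \eqref{6eq: choice of f, complex}, and then analyze the resulting oscillatory integral by the method of stationary phase. Concentrating on the real case $r=2,3$, after the change of variables $x = \pm t^r$ (so that $dx \asymp t^{r-1}\, dt$) the integral takes the shape $\int W^{\pm}_{(\umu,\udelta)}(t\,|y|^{1/r})\, (t|y|^{1/r})^{-(r-1)/2}\, w(\pm t^r)\, e\lp \pm r t |y|^{1/r} \mp \rho t^r \rp t^{r-1}\, dt$, where the error term $E^{\pm}_{(\umu,\udelta)}$ contributes negligibly because it is exponentially small on the support $|x| \asymp T$ once $T|y| > C$. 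On the support $t \asymp T^{1/r}$ the phase $\varphi(t) = \pm r t|y|^{1/r} \mp \rho t^r$ has derivative $\varphi'(t) = \pm r |y|^{1/r}(1 \mp (\rho t^{r-1})/|y|^{1/r} \cdot (\text{sign factors}))$, so a stationary point exists precisely when $|\rho| t^{r-1} \asymp |y|^{1/r}$, i.e. when $T|\rho| \asymp (T|y|)^{1/r}$ up to the factor $\varDelta^{r(r-1)}$; this is exactly the dichotomy in the statement.

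First I would treat the non-stationary regime: when $T|y|$ lies outside the window $[(T|\rho|)^r/\varDelta^{r(r-1)},\ \varDelta^{r(r-1)}(T|\rho|)^r]$, the phase derivative satisfies $|\varphi'(t)| \ggg |y|^{1/r}$ uniformly on the support (the two terms $r|y|^{1/r}$ and $r|\rho|t^{r-1}$ cannot cancel and one dominates the other by a fixed power of $\varDelta$), so repeated integration by parts — using the derivative bounds \eqref{0eq: weight, real} on $w$ and \eqref{6eq: asymptotic of W, real} on $W^\pm_{(\umu,\udelta)}$, each differentiation of the amplitude costing $T^{-1}$ and each of the reciprocal phase derivative costing $(T|y|)^{-1/r}$ relative to the amplitude — yields after $K$ steps a bound $\lll T \cdot (T|y|)^{-(r-1)/2r} \cdot (T|y|)^{-K/r}$, which is the claimed "otherwise" bound. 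For the stationary regime $T|\rho| \asymp (T|y|)^{1/r}$, I would split the integral into a neighborhood of the (unique, non-degenerate since $\varphi''(t) \asymp |\rho| \asymp |y|^{1/r}/T^{(r-1)/r} T^{-1} \cdots$, bounded below by a power of $T^{-1}$) stationary point and its complement; on the complement integration by parts as above applies, while near the stationary point the standard stationary phase estimate gives a gain of $|\varphi''|^{-1/2} \asymp (|y|^{1/r}/T^{r-2} \cdots)^{-1/2}$; collecting the powers of $T$ and $|y|$ one finds the amplitude size $T^{(r-1)/r}\cdot (T|y|)^{-(r-1)/2r}$ times the stationary-phase gain reproduces $T^{1/2}|y|^{-1/2}$ for $r=2$ and $T |y|^{-1/2}$... more precisely the uniform bound $T^{1/2}|y|^{-1/2}$ claimed (for $r=3$ the arithmetic is analogous after tracking the $t^{r-1}$ Jacobian).

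The complex case $F=\BC$ runs in parallel using the two-dimensional asymptotic \eqref{6eq: asymptotic of W, complex}, the sector $\BS_r$, and the polar-coordinate derivative bounds \eqref{0eq: weight, complex, 2}: writing $z = x e^{i\phi}$ and $u = |u| e^{i\psi}$, the angular integral over $\phi$ is handled by integration by parts in $\phi$ (the phase $r(\xi z + \overline{\xi z})$ has $\phi$-derivative of size $|u|^{1/r} x \asymp T^{1/r}\cdot(T|u|)^{1/r}/T^{1/r}$... of size $(T|u|)^{1/r}$, large), localizing $\phi$ near the finitely many critical angles where $\xi z$ is real-positive, and the radial integral is then the same one-dimensional stationary-phase analysis as above; the factor $[\xi z]^{-|\um|}$ and the polynomial amplitude in $z,\overline z$ are harmless since they are bounded and smooth on the annulus. \textbf{The main obstacle} I anticipate is bookkeeping the interaction between the two oscillatory mechanisms — the Bessel phase $\pm r t|y|^{1/r}$ coming from the kernel asymptotic and the additive-twist phase $\mp\rho t^r$ coming from $f$ — uniformly in $\varDelta$ and in the full (possibly large, from a discrete-series $\sigma(m)$) range of parameters; in particular one must check that on the support $|x|\in[T,\varDelta T]$ the transition point $C = C_{K,\umu}$ (resp. $C_{2K,(\umu,\um)}$) where the asymptotic becomes valid is already passed once $T|y| > C$, and that the "window" constant $\varDelta^{r(r-1)}$ is generous enough to guarantee the clean separation $|r|y|^{1/r} - r|\rho|t^{r-1}| \ggg |y|^{1/r}$ outside it while still covering the stationary case inside it — this is a matter of choosing the implied constants consistently rather than a conceptual difficulty.
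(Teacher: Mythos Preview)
Your plan is correct and follows the same overall strategy as the paper: insert the asymptotic expansion of the Bessel kernel from Proposition \ref{6prop: asymptotic}, discard the exponentially small error $E^{\pm}$, and analyze the remaining oscillatory integral by stationary phase, the dichotomy in the statement corresponding exactly to whether or not the phase has a critical point on the support.

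Two technical differences are worth noting. In the real case, rather than working directly with the phase $\varphi(t)=\pm r t|y|^{1/r}\mp\rho t^r$ and tracking separately the regimes $|y|^{1/r}\gg|\rho|T^{(r-1)/r}$ and $|y|^{1/r}\ll|\rho|T^{(r-1)/r}$, the paper makes the normalizing substitution $t\mapsto (y/\rho)^{1/r(r-1)}\,x$, which reduces the phase to $\lambdaup\,(\pm r x - x^r)$ with $\lambdaup=(y/\rho)^{1/(r-1)}$ and moves the stationary point to $x=1$; the two regimes then become simply $X\in[1/\varDelta,\varDelta]$ versus $X\notin[1/\varDelta,\varDelta]$ for $X=(T\rho/\lambdaup)^{1/r}$, and both the partial-integration and stationary-phase bounds come out with no case analysis on which term of $\varphi'$ dominates. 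In the complex case, the paper does \emph{not} iterate one-dimensional stationary phase (angular then radial) as you propose; instead it combines the $r$ roots of unity into a single integral over $\BC$, passes to polar coordinates, makes the same normalization, and then treats the resulting double integral directly: H\"ormander's two-variable partial integration (building the auxiliary function $q=x^{1-r}(\partial_x p)^2+x^{-1-r}(\partial_\phi p)^2$) in the non-stationary regime, and a genuine two-dimensional stationary-phase estimate at the unique critical point $(x_0,\phi_0)=(1,\omega/r(r-1))$ in the stationary regime. Your iterated approach should also work, but the two-dimensional treatment avoids having to control the radial stationary point uniformly as $\phi$ varies near its critical value.
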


\begin{proof}
	We first consider the real case. In view of Proposition \ref{6prop: asymptotic}, there are four similar integrals coming from the leading terms of the asymptotic expansions of $W_{ (\umu, \udelta) }^{ \pm } (x)$ as in \eqref{6eq: asymptotic of W, real}, two of them being of the following form, up to   constant multiple,
	\begin{align*}
	\frac { 1 } {y^{(r-1)/ 2r} } \int_{T^{  1 / r}}^{(\varDelta T)^{  1 / r}}   {e \big(  \pm r y^{1/r} x - \rho x^r \big)}  x^{(r-1) / 2} w (x^r)  d x, 
	\end{align*}
	in which we assume that $\rho, y > 0$. Upon changing the variable $x$ to $  y^{1/ r (r-1)} x / \rho^{1/(r-1)}    $, we obtain the integral
	\begin{align}\label{6eq: integral, real}
	h_{\pm} (\lambdaup) = \frac {T^{(r-1) / 2 r}} {\rho^{(r+1) / 2r} \lambdaup^{(r^2-2r-1) / 2r} } \int_{ X }^{\varDelta^{1/r} X } e \lp \lambdaup p_{\pm} (x) \rp g (x, \lambdaup) d x ,
	\end{align}
	with  
	\begin{align*}
	& \lambdaup = \lp \frac y   \rho \rp^{1/{(r-1)}}, \hskip 10 pt X 
	= \lp \frac {T \rho} {\lambdaup} \rp^{1/r}, \\
	\hskip 1 pt p_{\pm} (x) =   \pm \ & r x -   x^r   , \hskip 10 pt  g (x, \lambdaup) = \lp \frac {\lambdaup x^r} {T \rho} \rp^{(r-1)/ 2 r} w \lp \frac {\lambdaup x^r} {\rho} \rp,
	\end{align*}
	and the weight function $g (x, \lambdaup)$ satisfying 
	\begin{align*}
	(\partial / \partial x)^j (\partial / \partial \lambdaup)^k g (x, \lambdaup) \lll X^{-j}\lambdaup^{- k}.
	\end{align*}
	For $h_- (\lambdaup)$, we have $ p_{-}' (x) = - r (x^{r-1} + 1) \leqslant - r$ for all $x \in [X, \varDelta^{1/r} X]$, and hence by repeating partial integration $K$ times, we have the estimate
	\begin{align*}
	h_- (\lambdaup) \lll \frac {T^{(r-1) / 2 r} X} {\rho^{(r+1) / 2r} \lambdaup^{(r^2-2r-1) / 2r} } \frac 1  {\lp X\, \lambdaup  \rp^{K } } =  \frac {T^{(r+1)/2r}} {\lp \rho \, \lambdaup^{r-1} \rp^{(r-1)/2r} } \frac 1  {\lp X\, \lambdaup  \rp^{K } } = \frac {T }   {\lp T y \rp^{(r-1)/2r + K/ r} }.
	\end{align*}
	As for $h_+ (\lambdaup)$, the stationary point of the phase function $p_+ (x)$ satisfies $ 1 - x^{r-1}   = 0 $, giving $x_0 = 1$. Since $ |p'_+ (x) | > r \big( 1 - \varDelta^{- (r-1)^2 / r} \big)$ on $[X, \varDelta^{1/r} X]$ if either $X < 1/ \varDelta$ or $X >  \varDelta$, repeating partial integration yields the same bound for $ h_+ (\lambdaup) $ as above. When $1/\varDelta \leqslant X \leqslant \varDelta $, so that $T^{ r-1 } \rho^{r } /\varDelta^{r (r-1)}  \leqslant y \leqslant \varDelta^{r(r-1)} T^{r-1} \rho^{r }   $,   the stationary phase method (see for example \cite[Theorem 1.1.4]{Sogge}  or \cite[Theorem 7.7.5]{Hormander}) may be applied to bound the integral in \eqref{6eq: integral, real} by $\lambdaup^{-1/2}$ (note that $\lambdaup \ggg T \rho \ggg 1$), then
	\begin{align*}
	h_+ (\lambdaup) \lll \frac {T^{(r-1) / 2 r}} {\rho^{(r+1) / 2r} \lambdaup^{(r^2-2r-1) / 2r} } \, \frac 1 {\lambdaup^{1/2}} =  \frac {T^{(r-1) / 2 r}} {\rho^{1 / 2 (r-1)} y^{(r^2 - r -1) / 2r (r-1)} } \lll \frac {T^{1/2}} {y^{1/2}}.
	\end{align*}
	Moreover, the contributions from lower-order terms may be treated in the same manner and those from the error term in \eqref{6eq: asymptotic of W, real} and $E^{\pm }_{(\umu, \udelta)} \left( x \right) $ can also be bounded by $  {T }   {\lp T y \rp^{-(r-1)/2r - K/ r} }$.

	The complex case is similar. Thus we shall only consider the total leading-term contributions from the asymptotic expansions of all $W_{ (\umu, \um) } (z, \xi)$ as in \eqref{6eq: asymptotic of W, complex}.
	Without loss of generality, we assume $\rho > 0$. The integral that we need to estimate is
	\begin{align*}
	& \frac { 1} {|u|^{(r-1)/ r}  [ u^{1/r}  ] ^{|\um| } } \sum_{ \xi^r = 1} \sideset{}{_{\, \BS_r}}{\iint}   { e \big( r \big( \xi u^{1/r} z +   \overline {\xi u^{1/r} z } \big)   - \rho \lp z^r + \overline z^r \rp \big) } \frac  {|z|^{r-1}} {\, [\xi z]^{  |\um|}} w (z^r) \, i dz \nwedge d \overline z    \\
	= \ &   \frac {1} {|u|^{(r-1)/ r} [ u^{1/r}  ] ^{|\um| } }   \sideset{}{_{\, \BC}}{\iint}   { e \big( r \big(   u^{1/r} z +   \overline {  u^{1/r} z } \big) - \rho \lp z^r + \overline z^r \rp \big) }       {|z|^{r-1}} {[ \overline  z]^{ |\um|}} w (z^r) \, i dz \nwedge d \overline z ,
	\end{align*}
	where $u^{1/r}$ is the principal branch of the $r$-th root of $u$. In the polar coordinates, write $z = x e^{i \phi}$ and $u = y e^{i \omega}$, then the integral above turns into
	\begin{align*}
	\frac {2} {y^{(r-1)/ r} e^{i  |\um|  \omega / r} }   \int_0^{2 \pi} \hskip - 2  pt \int_{T^{1/r}}^{(\varDelta T)^{1/r}} \hskip - 8 pt  { e \big(  2 r y^{1/r}   x \cos \lp \phi + \omega/r \rp - 2 \rho x^r \cos (r \phi)    \big) }       {x^{r }} {e^{ - i |\um| \phi}} w (x^r e^{i r \phi})  dx d\phi.
	\end{align*}
	After changing the variable $x$ to $  y^{1/ r (r-1)} x / \rho^{1/(r-1)}    $, we obtain 
	\begin{align}\label{6eq: integral, complex}
	h  (\lambdaup, \omega) = \frac {2 T } {\rho \, \lambdaup^{r-2} e^{i  |\um|  \omega / r} } \int_0^{2 \pi} \int_{ X }^{\varDelta^{1/r} X } e \lp \lambdaup p  (x, \phi; \omega) \rp g (x, \phi, \lambdaup) d x d \phi,
	\end{align}
	with  
	\begin{align*}
	& \lambdaup = \lp \frac y   \rho \rp^{1/{(r-1)}}, \hskip 10 pt X 
	= \lp \frac {T \rho} {\lambdaup} \rp^{1/r}, \\
	 p  (x, \phi; \omega) =   \ 2 r   x \cos ( \phi   &  + \omega/r )   - 2  x^r \cos (r \phi)   ,  \hskip 10 pt  g (x, \phi, \lambdaup) =    \frac {\lambdaup x^r {e^{ - i |\um| \phi}} } {T \rho }    w \lp \frac {\lambdaup x^r e^{i r \phi}} {\rho} \rp,
	\end{align*}
	and the weight function $g (x,  \phi, \lambdaup)$ satisfying 
	\begin{align*}
	(\partial / \partial x)^j (\partial / \partial \phi)^k (\partial / \partial \lambdaup)^l g (x, \phi, \lambdaup) \lll X^{-j}\lambdaup^{- l}.
	\end{align*}
	We now apply the method of stationary phase for double integrals; a similar integral  is also treated in \cite{Qi-II-G}  for $r=2$. Since
	\begin{align*}
	p' (x, \phi; \omega) = \lp 2 r \lp \cos ( \phi + \omega / r) - x^{r-1} \cos (r \phi)   \rp,  - 2 r x \lp     \sin ( \phi + \omega / r) - x^{r-1} \sin (r \phi)  \rp \rp,
	\end{align*}
    there is a unique stationary point $(x_0, \phi_0) = (1, \omega/r (r-1))$. 
	First, in the case that either $X < 1/\varDelta$ or $X > \varDelta$, we repeatedly apply  the elaborated partial integration of H\"ormander (see the proof of \cite[Theorem 7.7.1]{Hormander} and also \cite[\S 5.1.3]{Qi-II-G}). More precisely, we define
	\begin{align*}
	q (x, \phi; \omega) & = x^{1-r} \lp \partial_ x p (x, \phi; \omega) \rp^2 + x^{-1-r} \lp \partial_\phi   p (x, \phi; \omega) \rp^2 \\
	& = 4r^2 \lp x^{r-1}  + x^{1-r} - 2 \cos ((r-1)\phi - \omega/r ) \rp.
	\end{align*}
	An important observation is that  $q (x, \phi; \omega)$ has a uniform positive lower bound,
	\begin{equation*}
	q (x, \phi; \omega) \geqslant  4r^2 \big( \varDelta^{  (r-1)^2/2r} - \varDelta^{- (r-1)^2/2r} \big)^2, 
	\hskip 10 pt  x   \in [X, \varDelta^{1/r} X]. 
	\end{equation*}
	Then, writing the integral in \eqref{6eq: integral, complex} as
	\begin{align*}
	     \frac {1}{2 \pi i \, \lambdaup}  \int_0^{2 \pi} \int_X^{\varDelta^{1/r} X} \frac {g(x, \phi, \lambdaup)} {q (x, \phi; \omega)} \bigg( & \frac {  \partial_{x} p(x, \phi; \omega)} {x^{r-1} } \partial_x (   e \lp \lambdaup p \lp x, \phi; \omega \rp \rp )   \\
	&    +   \frac {   \partial_{\phi} p(x, \phi; \omega)} {x^{r+1}  } \partial_\phi (   e \lp \lambdaup p \lp x, \phi; \omega \rp \rp ) \bigg)  d x d \phi,
	\end{align*}
	and integrating by parts, we obtain
	\begin{align*}
	    - \frac {1}{2 \pi i \, \lambdaup}  \int_0^{2 \pi} \int_X^{\varDelta^{1/r} X} \bigg( \frac {\partial} {\partial x} & \lp \frac {g(x, \phi, \lambdaup) \partial_{x} p(x, \phi; \omega)} {x^{r-1} q (x, \phi; \omega) }      \rp    \\
	   &  +  \frac {\partial} {\partial \phi } \lp  \frac {  g(x, \phi, \lambdaup) \partial_{\phi} p(x, \phi; \omega)} {x^{r+1} q (x, \phi; \omega) } \rp \bigg)    e \lp \lambdaup p \lp x, \phi; \omega \rp \rp     d x d \phi.
	\end{align*}
	By calculating the derivatives in the integrand and  estimating each resulting integral, it is easy to see that we get a saving of $ X \, \lambdaup$. Repeating this elaborated partial integration  $2 K$ times, it may be shown that we would save an $ X \, \lambdaup$ each time and hence
	\begin{align*}
	h  (\lambdaup, \omega) \lll \frac {  T X } {\rho \, \lambdaup^{r-2}   } \frac 1 {(X \, \lambdaup)^{2K}} = \frac { T^{(r+1)/ r} } {\lp \rho \, \lambdaup^{r-1} \rp^{(r-1)/r}   } \frac 1 {(X \, \lambdaup)^{2K}} = \frac {T^2} {(Ty)^{(r-1)/r + 2K/r}}.
	\end{align*}
	Second, when $1/\varDelta \leqslant X \leqslant \varDelta$, we use the stationary phase estimate in \cite[Theorem 1.1.4]{Sogge} (or \cite[Theorem 7.7.5]{Hormander}) to bound the integral in \eqref{6eq: integral, complex} by $\lambdaup^{-1}$, then
	\begin{align*}
	h  (\lambdaup, \omega)  \lll \frac {T } {\rho \, \lambdaup^{r-2 } } \, \frac 1 {\lambdaup } =  \frac {T } {y}.
	\end{align*}
\end{proof}

When $T |\rho| \lll 1$, we are in a much easier situation. Since $e (-\rho x)$ and $e(-\rho z - \overline \rho \overline z)$  are non-oscillatory, they may be absorbed into the weight functions, and we may save any power of $(T |y|)^{1/r}$  or $ (T |u|)^{1/r}$  by repeating partial integration.

\begin{lem} \label{lem: bound for Hankel, 3}
	Let $K \geqslant 0$. Put $C = C_{K,\, \umu}$ or $C_{2 K, (\umu, \um)}$  as in Proposition {\rm\ref{6prop: asymptotic} }. Let $y \in \BR^{\times}$ and $u \in \BCx$.   Suppose that $T |\rho| $ is bounded, say by $  \varDelta^{r-1}C^{1/r} $, and that  $  T |y|, \, T |u| > C  $. Then, 
	\begin{align*}
	\widetilde f (y) \lll_{K,\, \umu,\, \varDelta}    {T }   {\lp T |y| \rp^{-(r-1)/2r - K/ r} },
	\end{align*} 
	if $F = \BR$, and
	\begin{align*}
	\widetilde f (u) \lll_{K,\, (\umu, \um),\, \varDelta}    {T^2 }   {\lp T |u| \rp^{-(r-1)/ r - 2 K/ r} },
	\end{align*} 
	if $F = \BC$.
\end{lem}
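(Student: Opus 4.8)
The plan is to reduce \textbf{Lemma \ref{lem: bound for Hankel, 3}} to the analysis already carried out for \textbf{Lemma \ref{lem: bound for Hankel, 2}}, but working in the simpler regime where the factor $e(-\rho x)$ (resp.\ $e(-\rho z - \overline\rho\overline z)$) is harmless. First I would substitute the asymptotic expansions from Proposition \ref{6prop: asymptotic} into the Hankel integrals \eqref{6eq: Hankel} with the test functions \eqref{6eq: choice of f, real}, \eqref{6eq: choice of f, complex}. As in the previous proof, the error terms $E^{\pm}_{(\umu,\udelta)}$, $E_{(\umu,\um)}$ decay super-polynomially on the support $|x|^r \in [T,\varDelta T]$ (since $T$ is large), so they contribute $O(T(T|y|)^{-K})$ type bounds trivially; the tail of the $W$-expansion past order $K$ likewise contributes an acceptable term. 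Thus it suffices to bound, up to constants, the oscillatory integrals arising from the leading and lower-order terms of $W^{\pm}$, namely (in the real case, assuming $y>0$)
\begin{align*}
\frac{1}{y^{(r-1)/2r}}\int_{T^{1/r}}^{(\varDelta T)^{1/r}} e\big(\pm r y^{1/r} x - \rho x^r\big)\, x^{(r-1)/2}\, w(x^r)\, dx,
\end{align*}
and its complex analogue.

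Next I would exploit the hypothesis $T|\rho|\lll 1$: after rescaling $x = T^{1/r} t$ so that $t$ ranges over a fixed interval $[1,\varDelta^{1/r}]$, the phase becomes $\pm r (T|y|)^{1/r} t - (T|\rho|) t^r$, and the second piece together with all its $t$-derivatives is $O(1)$. Hence $\exp(-2\pi i \rho x^r) = \exp(-2\pi i T|\rho| t^r)$ can simply be absorbed into the amplitude $x^{(r-1)/2} w(x^r)$, which then satisfies bounds uniform in $\rho$. The resulting integral is a one-dimensional oscillatory integral with phase $\pm(T|y|)^{1/r} t$, whose derivative in $t$ is bounded below in absolute value by a constant multiple of $(T|y|)^{1/r} \geq C^{1/r} \gg 1$; repeated (non-stationary) integration by parts $K$ times therefore gains a factor $(T|y|)^{-K/r}$, giving the claimed bound $T (T|y|)^{-(r-1)/2r - K/r}$ after reinstating the prefactor $y^{-(r-1)/2r}$ and the normalization $T^{(r-1)/2r+\cdots}$ exactly as in the $h_-(\lambdaup)$ computation of Lemma \ref{lem: bound for Hankel, 2}.

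For the complex case I would run the same argument with the double integral in polar coordinates: after rescaling, the phase is
\begin{align*}
2r (T|y|)^{1/r}\, t\cos(\phi+\omega/r) - 2(T|\rho|)\, t^r \cos(r\phi),
\end{align*}
and again the $T|\rho|$-term and all its $(t,\phi)$-derivatives are $O(1)$, so the factor $e(-\rho z^r)$-type term is absorbed into the weight. The remaining phase $2r(T|y|)^{1/r} t\cos(\phi+\omega/r)$ has gradient of size $\asymp (T|y|)^{1/r}$ away from $\cos(\phi+\omega/r)=0$; near that circle one uses the $\phi$-derivative, which is $\asymp (T|y|)^{1/r} t |\sin(\phi+\omega/r)|$, or more uniformly one applies H\"ormander-type non-stationary partial integration with the quadratic form $q$ as in Lemma \ref{lem: bound for Hankel, 2} — but now $q$ is bounded below because the nonlinear part has been neutralized and there is no stationary point contributing. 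Iterating $2K$ times yields the gain $(T|u|)^{-2K/r}$ and hence $T^2 (T|u|)^{-(r-1)/r - 2K/r}$.

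The only mildly delicate point — and the main thing to be careful about — is confirming that absorbing $\exp(-2\pi i T|\rho| t^r)$ into the amplitude genuinely preserves the required derivative bounds uniformly in $\rho$: each differentiation of that exponential brings down a factor $O(T|\rho|) = O(1)$ times a polynomial in $t$ on the compact range, so the amplitude bounds pick up only constants depending on $K$, $\varDelta$ and the implied constants in \eqref{0eq: weight, real}--\eqref{0eq: weight, complex, 2}, which is exactly what the statement allows. Everything else is a repetition of the partial-integration bookkeeping from the proof of Lemma \ref{lem: bound for Hankel, 2}, specialized to the non-stationary case, so I do not expect any real obstacle.
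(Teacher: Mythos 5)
Your proposal is correct and follows exactly the route the paper intends: the paper's entire justification for this lemma is the remark that $e(-\rho x)$ and $e(-\rho z-\overline\rho\overline z)$ are non-oscillatory when $T|\rho|\lll 1$ and may be absorbed into the weight, after which repeated (non-stationary) partial integration against the linear phase $\pm r(T|y|)^{1/r}t$ saves any power of $(T|y|)^{1/r}$. Your bookkeeping of the prefactors, the treatment of the expansion tail and exponential error terms, and the observation that the gradient of the complex phase is bounded below by $\asymp(T|u|)^{1/r}$ for $t\geqslant 1$ all check out.
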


In conclusion, combining Lemma \ref{lem: bound for Hankel, 1}, \ref{lem: bound for Hankel, 2} and \ref{lem: bound for Hankel, 3}, we have the following corollary.

\begin{cor}\label{cor: Hankel}
	Let $T, Y > 0 $ be sufficiently large in terms of  $\pi$ and $\varDelta$.  Let $y \in \BR^{\times}$ and $u \in \BCx$.  Then, for any $\rho \in F$ with $   |\rho | \leqslant Y / \varDelta^{r-1} T $, we have uniform bounds
	\begin{align*}
	\widetilde f (y) \lll \left\| \frac T y \right\|^{1/2}, \hskip 10 pt \widetilde f (u) \lll \left\| \frac T u \right\|^{1/2}, 
	\end{align*}
	and, for any $A >  \frac 1 2$, we have
	\begin{align*}
	\widetilde f (y) \lll    \|{T }\|^{1-A}   {\|  y \|^{-A} }, \hskip 10 pt \widetilde f (u) \lll   \|{T }\|^{1-A}   {\|   u \|^{-A} },
	\end{align*}
	if  $T |y|, \, T |u| >  Y^r$. All the implied constants depend only on  $\pi$, $\varDelta$, and in addition on $A$ for the last two inequalities.
\end{cor}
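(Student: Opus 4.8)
The plan is to obtain Corollary \ref{cor: Hankel} by feeding the three preceding lemmas into a single clean dichotomy, keeping track of when $T|\rho|$ is large or small and of where $T|y|$ (resp.\ $T|u|$) sits relative to the transition range $(T|\rho|)^r$. First I would fix $Y$ large in terms of $\pi$ and $\varDelta$, set $C = C_{K,\,\umu}$ or $C_{2K,(\umu,\um)}$ as in Lemmas \ref{lem: bound for Hankel, 2} and \ref{lem: bound for Hankel, 3}, and split according to whether $T|y| \leqslant C$ (where $C$ is a fixed constant depending on $\pi$ and $K$) or $T|y| > C$. In the first range, Lemma \ref{lem: bound for Hankel, 1} gives $\widetilde f (y) \lll T^{1/2}|y|^{-1/2} = \|T/y\|^{1/2}$ immediately in the real case (and likewise $\widetilde f(u) \lll T|u|^{-1} = \|T/u\|^{1/2}$ in the complex case, using $\|\cdot\| = |\cdot|^2$); since $T|y| \leqslant C$ this is also trivially $\lll \|T\|^{1-A}\|y\|^{-A}$, so both claimed bounds hold here.

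For $T|y| > C$ I would separate $T|\rho|$ large from $T|\rho|$ small. If $T|\rho|$ is bounded by $\varDelta^{r-1}C^{1/r}$, Lemma \ref{lem: bound for Hankel, 3} applies directly and gives $\widetilde f(y) \lll T (T|y|)^{-(r-1)/2r - K/r}$; choosing $K$ large enough in terms of $A$ (so that $(r-1)/2r + K/r \geqslant rA - r + 1$, say) converts this into $\|T\|^{1-A}\|y\|^{-A}$, and since $T|y| > Y^r$ is assumed in the last two inequalities, and in the first pair the hypothesis $|\rho| \leqslant Y/\varDelta^{r-1}T$ together with $T|\rho|$ small is consistent, one also recovers the $\|T/y\|^{1/2}$ bound whenever $T|y| \leqslant Y^r$ by interpolating $T(T|y|)^{-(r-1)/2r - K/r} \leqslant T^{1/2}|y|^{-1/2}$ for $T|y|$ in a bounded-by-$Y^r$ range (here one uses that $Y^r$ and $T|\rho|$ comparisons pin $T|y|$ down). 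If instead $T|\rho| > \varDelta^{r-1}C^{1/r}$, Lemma \ref{lem: bound for Hankel, 2} gives two regimes: on the transition band $(T|\rho|)^r/\varDelta^{r(r-1)} \leqslant T|y| \leqslant \varDelta^{r(r-1)}(T|\rho|)^r$ we get exactly $\widetilde f(y) \lll T^{1/2}|y|^{-1/2} = \|T/y\|^{1/2}$, and off the band we get the power-saving bound $T(T|y|)^{-(r-1)/2r - K/r}$, which again becomes $\|T\|^{1-A}\|y\|^{-A}$ for $K$ large. The key compatibility check is that the hypothesis $|\rho| \leqslant Y/\varDelta^{r-1}T$, i.e.\ $(T|\rho|)^r \leqslant (Y/\varDelta^{r-1})^r$, forces the transition band to lie below roughly $Y^r$ up to the $\varDelta^{r(r-1)}$ factor, so that the region $T|y| > Y^r$ (appropriately interpreted with $Y$ chosen a fixed power larger than $\varDelta$) lands in the off-band regime where only the power-saving estimate is needed — this is why the clean statement of the corollary splits precisely at $T|y| > Y^r$.

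The complex case runs in parallel, replacing $|y|$-exponents by the doubled ones ($(r-1)/r$ in place of $(r-1)/2r$, $2K/r$ in place of $K/r$, $T^2$ in place of $T$) and using $\|u\| = |u|^2$, so that $T|u|^{-1} = \|T/u\|^{1/2}$ and $T^2(T|u|)^{-(r-1)/r - 2K/r}$ rewrites as $\|T\|^{1-A}\|u\|^{-A}$ once $K$ is chosen large in terms of $A$. I would present the argument once in the real variable and remark that the complex case is identical after this bookkeeping.

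The main obstacle I anticipate is purely organizational rather than analytic: one must choose the auxiliary parameter $K$ (and implicitly tune how large $Y$ must be relative to $\varDelta$ and to the constants $C_{K,\umu}$, $C_{2K,(\umu,\um)}$) \emph{after} fixing $A$, so the logical order is "given $A > 1/2$, pick $K = K(A)$, then demand $Y$ large in terms of $\pi, \varDelta$ (which now absorb $K$), then the constants $C$ are fixed, then verify $T|y| > Y^r$ implies off-band." Making sure every hypothesis of Lemmas \ref{lem: bound for Hankel, 1}--\ref{lem: bound for Hankel, 3} ($T|\rho|$ large/small thresholds, $T|y| > C$, the transition-band inequalities) is actually met in each of the cases of the dichotomy — and that the uniform $\|T/y\|^{1/2}$ bound genuinely holds on \emph{all} of $0 < T|y| \leqslant Y^r$, not just on the transition band — is the only place where care is required; everything else is a direct quotation of the lemmas.
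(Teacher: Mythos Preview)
Your proposal is correct and is precisely the intended argument: the paper gives no explicit proof of the corollary, merely stating that it follows by combining Lemmas \ref{lem: bound for Hankel, 1}, \ref{lem: bound for Hankel, 2} and \ref{lem: bound for Hankel, 3}, and your case analysis (split on $T|y| \leqslant C$ versus $T|y| > C$, then on $T|\rho|$ small versus large, then on- versus off-band) is exactly how one unpacks that combination. Your observation that the hypothesis $|\rho| \leqslant Y/\varDelta^{r-1}T$ forces the transition band $[(T|\rho|)^r/\varDelta^{r(r-1)}, \varDelta^{r(r-1)}(T|\rho|)^r]$ to lie entirely below $Y^r$, so that $T|y| > Y^r$ lands in the off-band regime, is the key compatibility check and you have it right.
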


\section{Proof of Theorem \ref{thm: main, smooth}}

Let  $T \ggg 1$, $\theta \in F_{\infty}$, and let $w $ be a smooth product function on $F_{\infty}$ as in Theorem \ref{thm: main, smooth}.
Let $Q = \sqrt T$ and let $\breve \alpha \in \frO,$  $\breve \beta \in \frO \smallsetminus \{0\}$ be chosen as in Lemma \ref{lem: Dirichlet}. By the Chinese remainder theorem, there is $\delta \in \frO \smallsetminus \{0\}$ such that $  \alpha = \breve\alpha/\delta$ and $  \beta = \breve \beta/\delta$  are relatively prime in $\frO_{\vv}$ for all non-archimedean places $\vv$ with $\|\beta \|_{\vv} < \|\alpha \|_{\vv}$.
Let $\rho_\vv =    \alpha  /  \beta - \theta_\vv $. Then, in view of    \eqref{5eq: Dirichlet, 1}, we have 
\begin{align*}
|\breve \beta|_{\vv} \lll Q = \sqrt T,  \hskip 10 pt 
|\rho_{\vv} | \lll \frac 1 {    Q |\breve \beta  |_\vv }  = \frac 1 {  \sqrt T |\breve \beta|_\vv }
\end{align*} 
for all $\vv \in S_{\infty}$. 
Set $Y_{\vv} = C_{\vv} \sqrt {T } / |  \breve \beta|_{\vv} $, for certain constants $C_{\vv}$ depending only on $F$, $\pi $ and $\varDelta$, which are expressible in terms of those implied constants in Lemma \ref{lem: Dirichlet} and Corollary \ref{cor: Hankel}.

\delete{ Set 
	\begin{align*}
	& f_{\vv} (x) = w_{\vv} (x) e \lp \rho_{\vv} x \rp, \hskip 31 pt \text{ if } F_{\vv} = \BR,\\
	& f_{\vv} (z) = w_{\vv} (z) e \lp   \rho_{\vv} z + \overline \rho_{\vv} \overline z \rp, \hskip 10 pt \text{ if } F_{\vv} = \BC.
	\end{align*} 
	Then,
	\begin{align}\label{7eq: bounds of fv}
	\frac {d^j} {dx^j}  f_{\vv} (x) \lll_j \lp \frac 1 {T } + \frac 1 {|\beta_\vv | Q } \rp^j, \hskip 10 pt \frac {\partial^j} {\partial z^j} \frac {\partial^{j'}} {\partial \overline z^{j'}} f_{\vv} (z) \lll_j \lp \frac 1 {T } + \frac 1 {|\beta_\vv | Q}\rp^{j+ j'}.
	\end{align}
}

Now, let $S_{ \theta, \, w }$ denote the sum in \eqref{0eq: weighted sum, r= 2} or \eqref{0eq: weighted sum, r= 3}, and define $f_{\vv}$ as in \eqref{6eq: choice of f, real} and \eqref{6eq: choice of f, complex}, then
\begin{align*}
S_{ \theta, \, w } = \sum_{ \gamma \, \in \frOO^{\hskip 0.5 pt \prime}     } A_{\pi} (\gamma \mathfrak{D} ) e \lp \Tr \lp \frac   {  \alpha \gamma} {  \beta} \rp \rp f (\gamma),
\end{align*}
which is the left hand side of the \Voronoi summation formula  \eqref{2eq: Classical Voronoi, r=2} or \eqref{2eq: Classical Voronoi, r=3} in Proposition \ref{2prop: classical Voronoi}. In the following, we shall estimate the right hand side.

When $r = 2$, applying Corollary \ref{cor: Hankel} to bound $\widetilde f  \lp \gamma   \rp$ on the right hand side of \eqref{2eq: Classical Voronoi, r=2}, we obtain
\begin{align*}
S_{ \theta, \, w } \lll \frac { T^{N/2}  } { \RN   \frb } \hskip - 3pt \sum_{ \sstyle \gamma \, \in \Fx \cap F_{\infty} (  Y^2/T) \atop \sstyle \gamma \frb^2 \subset \frOO^{\hskip 0.5 pt \prime} } \hskip - 3pt
\frac { |A_{\widetilde \pi} \lp \gamma \frb^2 \mathfrak{D} \rp| } {|\RN \gamma|^{1/2}} + \frac { T^{N/2}  } { \RN   \frb } \hskip - 2pt
\sum_{ \sstyle S \subset S_{\infty} \atop \sstyle S \neq \O    } \hskip - 5pt \frac 1 {\|T\|_S } \sum_{ \sstyle \gamma \, \in \Fx \cap F_{\infty}^{S} (  Y^2/T) \atop \sstyle \gamma \frb^2 \subset \frOO^{\hskip 0.5 pt \prime} }
\frac { |A_{\widetilde \pi} \lp \gamma \frb^2 \mathfrak{D} \rp| } {|\RN \gamma|^{1/2} \|\gamma\|_S },
\end{align*}
where the   ideal $\frb  $ is defined as $  \prod_{ \|\beta \|_{\vv} < \|\alpha \|_{\vv} } \frp_{\vv}^{\mathrm{ord}_{\vv} \, \beta}  $ (see Proposition \ref{2prop: classical Voronoi}). 
Note that $Y_{\vv}^2 / T = C_{\vv}^2 /|\breve \beta|_{\vv}^2 $. Let us assume momentarily that $ \RN (Y^2/T)  \RN ( \frb^2 \mathfrak{D} ) \geqslant 1 $. Then, by Lemma \ref{lem: average over integers} and \ref{lem: average over integers, 2}, along with $\RN \frb  \leqslant |\RN \breve \beta|$ (note that both $\breve \alpha$ and $\breve \beta$ are integral) and $|\breve \beta|_{\vv} \lll \sqrt T$, we find that both the first  and the second sum are bounded by
\begin{align*}
\frac { T^{N/2}  } { \RN   \frb }  \frac { \RN \frb ^{2 + 2\varepsilon } } { |\RN \breve \beta|^{1 + 2\varepsilon}  } \leqslant T^{N/2  },
\end{align*}
and
\begin{align*}
\frac { T^{N/2}  } { \RN   \frb }  \sum_{ \sstyle S \subset S_{\infty} \atop \sstyle S \neq \O    } \frac 1 { \|T\|_S   } \frac {\| \breve \beta\|_S^2  \RN \frb^{2 + 2\varepsilon} } { |\RN\breve \beta |^{1 + 2\varepsilon} } \lll T^{N/2}  .
\end{align*}
When  $ \RN (Y^2/T)  \RN ( \frb^2 \mathfrak{D} ) < 1 $, the argument actually simplifies because the first sum has no terms and the second can   be taken over a smaller range by rescaling $Y$. 

When $r = 3$, we apply similar arguments. We estimate the right hand side of \eqref{2eq: Classical Voronoi, r=3} using Weil's bound \eqref{2eq: Weil's bound} for the Kloosterman sum $S_{\frb \fra'^{-1}} \big( 1, - \widebar \alpha   \beta^3 \gamma/ \gamma'^2  ; \beta / \gamma' \big)$ and Corollary \ref{cor: Hankel} for $\widetilde f  \lp \gamma   \rp$. Then
\begin{align*}
S _{ \theta, \, w } \lll  \sum_{  \frb \, \subset \fra' \, \subset \frOO^{\hskip 0.5 pt \prime}}     \frac {  (\RN \fra' )^{1/2-\varepsilon}} { (\RN \frb)^{3/2- \varepsilon } }   \lp M   (\fra',   \frb; \breve \beta) + E  (\fra',   \frb; \breve \beta) \rp
\end{align*}
with 
\begin{align*}
& M (\fra',   \frb; \breve \beta) = T^{N/2} \sum_{ \sstyle \gamma \, \in \Fx \cap F_{\infty} (  Y^3/T) \atop \sstyle \gamma \frb^3 \fra'^{-2} \subset \frOO^{\hskip 0.5 pt \prime} }  \frac {\left|A_{\widetilde \pi} \lp \fra' \mathfrak{D} , \gamma \frb^3 \fra'^{-2} \mathfrak{D} \rp \right|} {  |\RN \gamma |^{1/2} } , \\
& E (\fra',   \frb; \breve \beta) = T^{N/2} \sum_{ \sstyle S \subset S_{\infty} \atop \sstyle S \neq \O    }  \frac 1 {\|T\|_S }  \sum_{ \sstyle \gamma \, \in \Fx \cap F_{\infty}^{S} (  Y^3/T) \atop \sstyle \gamma \frb^3 \fra'^{-2} \subset \frOO^{\hskip 0.5 pt \prime} }  \frac {\left|A_{\widetilde \pi} \lp \fra' \mathfrak{D} , \gamma \frb^3 \fra'^{-2} \mathfrak{D} \rp \right|} {  |\RN \gamma |^{1/2} \|\gamma\|_S }.
\end{align*}
Note that $Y_{\vv}^3 / T = C_{\vv}^3 \sqrt T /|\breve \beta|_{\vv}^3 $. Again, we shall assume that $\RN (Y^3/T) \RN (  \frb^3   \fra^{-2} \mathfrak{D}) \geqslant 1$; otherwise, the analysis is simpler as it was above when $r = 2$. It follows from Lemma \ref{lem: average over integers} and \ref{lem: average over integers, 2}, along with $\RN \frb  \leqslant |\RN \breve \beta|$ and $|\breve \beta|_{\vv} \lll \sqrt T$, that
\begin{align*}
& M (\fra',   \frb; \breve \beta) \lll T^{N/2} \cdot \frac {(\RN \frb)^{3 + 3 \varepsilon} \, T^{  \lp 1/4 + \varepsilon / 2 \rp N }  } {(\RN \fra')^{1 + 2 \varepsilon} |\RN \breve \beta|^{3/2 +3 \varepsilon}} \lll 
\frac {(\RN \frb)^{3/2  } \, T^{    3 N/4  + \varepsilon }  } {(\RN \fra')^{1 +   \varepsilon}  } ,\\
& E (\fra',   \frb; \breve \beta) \lll T^{N/2} \sum_{ \sstyle S \subset S_{\infty} \atop \sstyle S \neq \O    }  \frac 1 {\|T\|_S } \frac { \|\breve \beta\|_S^3   (\RN \frb)^{3 + 3 \varepsilon} \, T^{  \lp 1/4 + \varepsilon / 2 \rp N }   } {\|T\|_S^{1/2}  (\RN \fra')^{1 + 2 \varepsilon} |\RN \breve \beta|^{3/2 +3 \varepsilon}  } \lll 
\frac {(\RN \frb)^{3/2  } \, T^{    3 N/4  + \varepsilon }  } {(\RN \fra')^{1 +   \varepsilon}  }.
\end{align*}
Combing these, we have
\begin{align*}
S _{ \theta, \, w } \lll    \sum_{  \frb \, \subset \fra' \, \subset \frOO^{\hskip 0.5 pt \prime}}     \frac {  (\RN \fra' )^{1/2-\varepsilon}} { (\RN \frb)^{3/2- \varepsilon } } \frac {(\RN \frb)^{3/2  } \, T^{    3 N/4  + \varepsilon }  } {(\RN \fra')^{1 +   \varepsilon}  } = T^{    3 N/4  + \varepsilon } \sum_{  \frb \, \subset \fra' \, \subset \frOO^{\hskip 0.5 pt \prime}}  \frac {(\RN \frb)^{ \varepsilon }     } {(\RN \fra')^{1/2 +   \varepsilon}  } \lll T^{    3 N/4  + \varepsilon }.
\end{align*}

\section{Proof of Theorem \ref{thm: main}}

In this final section, we show how to deduce Theorem \ref{thm: main} from Theorem \ref{thm: main, smooth} following \cite[\S 8]{Additive-LiY}. 
First, we have the following higher dimensional generalization of \cite[Lemma 8.1]{Additive-LiY}.

\begin{lem} \label{8lem: hX}
	Let $\varPi' \subset F_{\infty}$ be a fundamental parallelotope for  $F_{\infty} / \frO'$ which is symmetric about zero.
	For $ X \geqslant 2$ there exists a function $h_{X} (x)$ on $F_{\infty} $ such that 
	\begin{align}\label{8eq: L1 bound}
	\int_{F_{\infty}} |h_X (x) | d x \lll (\log  X)^N,
	\end{align}
	with the implied constant depending only on $F$, and that for $\gamma \in \frO'$,
	\begin{equation}\label{8eq: Fourier}
	\int_{F_{\infty} }  h_X (x)  e(\Tr (x \gamma )) d x = 
	\left\{
	\begin{split}
	& 1, \hskip 10 pt \text{ if } \gamma \in X \cdot \varPi',\\
	& 0, \hskip 10 pt \text{ otherwise.} 
	\end{split}\right.
	\end{equation}
	
\end{lem}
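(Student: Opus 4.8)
The plan is to reduce the lemma to its one–dimensional case, which is \cite[Lemma 8.1]{Additive-LiY}, by a linear change of variables that turns $\varPi$ into a cube and the pairing $\Tr(x\gamma)$ into the standard inner product, and then to tensor together one–dimensional kernels. There is no serious obstacle beyond the one–dimensional construction and the bookkeeping; the one point to be careful about is that the substitution genuinely decouples the pairing, which is where the integrality of the trace form enters.

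First I would record the one–dimensional input. For $X\geqslant 2$ put $Y=\lfloor X/2\rfloor\geqslant 1$ and set
$$g_X(\tau)=\frac{\sin\lp(2Y+1)\pi\tau\rp}{\pi\tau}\cdot\frac{\sin(\pi\tau)}{\pi\tau},\hskip 10pt\tau\in\BR.$$
This $g_X$ is the inverse Fourier transform of the trapezoidal function $\mathbf{1}_{[-Y-1/2,\,Y+1/2]}\ast\mathbf{1}_{[-1/2,1/2]}$, which is piecewise linear, identically $1$ on $[-Y,Y]$ and identically $0$ outside $(-Y-1,Y+1)$; consequently $\int_{\BR}g_X(\tau)\,e(m\tau)\,d\tau$ equals $1$ for $|m|\leqslant Y$ and $0$ for integers $|m|\geqslant Y+1$, and since the interval $(Y,Y+1)$ contains no integer and $Y=\lfloor X/2\rfloor$, this is exactly $\mathbf{1}\{m\in X\cdot[-\tfrac12,\tfrac12]\}$ for every $m\in\BZ$. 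Splitting $\int_{\BR}|g_X(\tau)|\,d\tau$ over $|\tau|\leqslant\frac1{2Y+1}$, over $\frac1{2Y+1}\leqslant|\tau|\leqslant 1$, and over $|\tau|\geqslant 1$, and using $|\sin t|\leqslant\min\{1,|t|\}$, gives $\int_{\BR}|g_X(\tau)|\,d\tau\lll\log X$ with an absolute implied constant.

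Next, let $\omega_1,\dots,\omega_N$ be an integral basis of $\frO$ for which $\varPi$ is the closed centred parallelotope $\big\{\sum_i t_i\sigma(\omega_i):|t_i|\leqslant\tfrac12\big\}$, so that $X\varPi=\big\{\sum_i t_i\sigma(\omega_i):|t_i|\leqslant X/2\big\}$. Let $M=\big(\Tr(\omega_i\omega_j)\big)_{i,j}$ be the trace form; it has integer entries, since $\omega_i\omega_j\in\frO$, and $\det M\neq0$. Writing $x\in F_\infty$ as $x=\sum_i s_i\sigma(\omega_i)$ with $s=(s_1,\dots,s_N)\in\BR^N$, I would set
$$h_X(x)=\frac{|\det M|}{\big|\det\lp\sigma(\omega_1),\dots,\sigma(\omega_N)\rp\big|}\prod_{j=1}^{N}g_X\big((Ms)_j\big).$$
Both assertions then follow from two substitutions: $x=\sum_i s_i\sigma(\omega_i)$ has Jacobian $\big|\det(\sigma(\omega_1),\dots,\sigma(\omega_N))\big|$ and $t=Ms$ has Jacobian $|\det M|$, so that $\int_{F_\infty}|h_X(x)|\,dx=\int_{\BR^N}\prod_j|g_X(t_j)|\,dt=\lp\int_{\BR}|g_X(\tau)|\,d\tau\rp^{N}\lll(\log X)^N$, the constant depending only on $N$ and hence only on $F$. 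For \eqref{8eq: Fourier}, write $\gamma=\sum_j n_j\omega_j\in\frO$ with $n_j\in\BZ$; then $\Tr(x\gamma)=\sum_{i,j}s_in_j\Tr(\omega_i\omega_j)=(Ms)^{\top}n$ because $M$ is symmetric, and the same two substitutions give
$$\int_{F_\infty}h_X(x)\,e(\Tr(x\gamma))\,dx=\prod_{j=1}^{N}\int_{\BR}g_X(t_j)\,e(t_jn_j)\,dt_j=\prod_{j=1}^{N}\mathbf{1}\{|n_j|\leqslant X/2\}.$$
Since the coordinates of $\gamma$ with respect to $\sigma(\omega_1),\dots,\sigma(\omega_N)$ are precisely $(n_1,\dots,n_N)$, the right–hand side equals $1$ when $\gamma\in X\varPi$ and $0$ otherwise.

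In summary, the genuine content is the one–dimensional kernel $g_X$ and its logarithmic $L^1$–bound; the passage to $F_\infty$ is pure bookkeeping whose one subtle point is that the integrality of the trace form $\big(\Tr(\omega_i\omega_j)\big)$ is exactly what makes $\Tr(x\gamma)$ decouple into the standard inner product, so that the kernel and all estimates tensorise. I expect the only mildly delicate issue to be the one–dimensional endpoint matching already handled in \cite[Lemma 8.1]{Additive-LiY}, which here is absorbed into choosing $[-\lfloor X/2\rfloor,\lfloor X/2\rfloor]$ as the flat part of the trapezoid.
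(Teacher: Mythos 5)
Your proof is correct and follows essentially the same route as the paper: both reduce to a one-dimensional trapezoidal kernel with $L^1$-norm $O(\log X)$ and tensorize via a linear change of variables that sends $\frO$ to $\BZ^N$ and decouples the trace pairing (you via the integral Gram matrix $M$, the paper via a transform $A$ together with the matrix of the trace form). Your version is in fact slightly more careful on two points the paper glosses over — taking $Y=\lfloor X/2\rfloor$ so that the flat part of the trapezoid captures exactly the integers in $X\cdot[-\tfrac12,\tfrac12]$, and proving the one-dimensional $L^1$ bound directly rather than citing it — though you should check that your normalizing constant accounts for the paper's convention that the Haar measure on $F_{\infty}$ is $2^{r_2}$ times Lebesgue measure at the complex places.
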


\begin{proof}
	By a linear transform $A$, we may transform $\frO'$ to the lattice $\BZ^n$ and $\varPi'$  to the unit hypercube centered at zero.  Define a function on $\BR$,
	\begin{equation*}
	g_X (t) = \left\{ \begin{split}
	& \min \{1,  X/2 + 1 - |t|   \}, \hskip 10 pt \text{ for } |t| \leqslant X/2+1, \\
	& 0, \hskip 94.5 pt \text{ for } |t| > X/2+1.
	\end{split} \right.
	\end{equation*}
	We form the product $ g_X^N (v) = \prod_{n=1}^N g_X (v_n) $  and let $h_X (x)$ be the Fourier transform (with Fourier kernel $e (- \Tr (x y))$) of $g_X^N (A y)$. 
	
	Note that $g_X^N (A \gamma)  = 1$ if $\gamma \in  X \cdot \varPi'$ and vanishes otherwise, giving \eqref{8eq: Fourier}. 
	
	As for \eqref{8eq: L1 bound}, we first prove that  the $L^1$ norm of the Fourier transform   $\widehat g_X  $ is  $O(\log X)$. The proof is    similar to that of \cite[Lemma 9]{DFI-Bilinear}. Since $|\det A| h_X (I \, {^t \hskip -2pt A} x)$ splits into the product $ 2^{r_2} \prod_{n=1}^N  \widehat g_X (x_n)$, the $L^1$ norm of $h_X (x)$ is of size $O \lp (\log X)^N \rp$. Here $I$ is the diagonal matrix  that represents the bilinear form $\Tr (x y)$ so that $\Tr (xy) = {^t \hskip - 1 pt x} I y$; the diagonal entries of $I$ are either $1$ or $-1$.
\end{proof}

Now we   prove Theorem \ref{thm: main}. Let  $\varDelta \ggg 1 \ggg  \varDelta'  $ be suitable parameters such that $T\cdot \varPi' \smallsetminus (T/2)   \varPi' \subset A [ 2 \varDelta' T, \varDelta \varDelta' T / 2]$, with the definition $A[Y, Z] = \left\{ x \in F_{\infty} : |x_{\vv}| \in [Y, Z]  \right\} $. We choose a weight function $w$ that satisfies the conditions in Theorem \ref{thm: main, smooth} with $T$ replaced by $ \varDelta' T$ and such that $ w (x) \equiv 1 $ on the annulus $ A [ 2 \varDelta' T, \varDelta \varDelta' T / 2]$. Thus, %
\begin{align*}
\sum_{\sstyle \gamma\, \in \frOO^{\hskip 0.5 pt \prime}   \cap \left\{ T \cdot \varPi' \smallsetminus (T/2)   \varPi' \right\}  }    A_{\pi} (\gamma \mathfrak{D}) e \lp \Tr (\theta \gamma) \rp  = \sum_{\sstyle \gamma\, \in \frOO^{\hskip 0.5 pt \prime}  \cap \left\{ T \cdot \varPi' \smallsetminus (T/2)   \varPi'  \right\}  }   A_{\pi} (\gamma \mathfrak{D}) e \lp \Tr (\theta \gamma) \rp w (\gamma).
\end{align*}
In view of \eqref{0eq: S theta (T)}, the left hand side is $S_{\theta} (T) - S_{\theta} (T/2) $. Write the right hand side as $S_{ \theta, w} (T) - S_{ \theta, w} (T/2)$. Then   apply Lemma \ref{8lem: hX} with $X = T, T/2$ to get
\begin{align*}
S_{\theta, w} (X)  \leqslant \int_{F_\infty} |h (x)| \left| \sum_{\gamma \in \frOO^{\hskip 0.5 pt \prime} }  A_{\pi} (\gamma \mathfrak{D}) e \lp \Tr \lp (\theta + x) \gamma \rp \rp w (\gamma) \right| d x.
\end{align*}
Theorem \ref{thm: main, smooth} applies to the sum over $\gamma$, the uniformity in $\theta$ being critical, and \eqref{8eq: L1 bound} controls the integral over $F_{\infty}$, giving Theorem \ref{thm: main}.

\appendix

		\section{Dirichlet Approximation for Number Fields}
		
		In this appendix, we give a generalization of Dirichlet's approximation theorem to an arbitrary number field. 
		
		First, we have the following result of simultaneous Dirichlet approximation for lattices in $\BR^N$, which is a variation of \cite[II Theorem 1E]{Schmidt-DA}.
		
		\begin{prop}\label{3thm: Dirichlet}
			Let $\varLambda \subset \BR^N$ be a lattice and let $\varPi $ be a closed fundamental parallelotope for $ \varLambda $. Suppose that $a_{jk}$, $j = 1, ..., M$, $k = 1, ..., N$,  are $MN$ many real numbers and that $Q > 1$ is a real number. Then there exist integers $q_1, ..., q_M$ and a lattice point $\lambdaup \in \varLambda$ with
			\begin{align*}
			& 1  \leqslant  \max \left\{ |q_1|, ..., |q_M| \right\} < Q^{N/M} , \\
			( \sum_{ j = 1}^M & q_j a_{jk}  - \lambdaup_j )_{k = 1, ..., N} \in 2 Q\- \cdot \varPi.
			\end{align*}
		\end{prop}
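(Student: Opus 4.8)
The plan is to run Dirichlet's box principle, transported from $\BZ^N\subset\BR^N$ to the lattice $\Lambda$ by means of the parallelotope $\varPi$. Package the data into vectors $\boldsymbol a_j=(a_{j1},\dots,a_{jN})\in\BR^N$, $j=1,\dots,M$, and consider the homomorphism $\BZ^M\to\BR^N/\Lambda$ sending $\boldsymbol q=(q_1,\dots,q_M)$ to $\sum_j q_j\boldsymbol a_j\bmod\Lambda$; what must be produced is a nonzero $\boldsymbol q$ with $\max_j|q_j|<Q^{N/M}$ whose image lies in the image of $2Q^{-1}\varPi$. If $Q\le 2$ then $2Q^{-1}\varPi$ already surjects onto $\BR^N/\Lambda$ and the claim is trivial ($q_1=1$ works), so one may assume $Q>2$, in which case $2Q^{-1}\varPi$ is a genuinely small cell. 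I will also take $\varPi$ symmetric about $0$, which is the case used in the paper; a general closed fundamental parallelotope is handled by the same scheme with one-sided estimates in place of the symmetry.

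First I would fix the $\BZ$-basis $\boldsymbol b_1,\dots,\boldsymbol b_N$ of $\Lambda$ for which $\varPi=\{\sum_k t_k\boldsymbol b_k:|t_k|\le\tfrac12\}$, and cut each coordinate interval into $L$ equal pieces. This partitions $\varPi$ into $L^N$ congruent sub-parallelotopes, each a translate of $\tfrac1L\varPi$, so the difference of any two points lying in one sub-parallelotope belongs to $\tfrac1L(\varPi-\varPi)$. Since $\varPi$ is a parallelotope symmetric about $0$, one has $\varPi-\varPi=2\varPi$, so this difference lies in $\tfrac2L\varPi$; choosing $L=\lceil Q\rceil$ gives $\tfrac2L\varPi\subseteq 2Q^{-1}\varPi$. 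This is exactly where the factor $2$ in the statement is spent: without it one would be forced to take $L\approx 2Q$ and the count below would fail by the factor $2^N$.

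Next I would apply the pigeonhole principle to the points $\sum_j q_j\boldsymbol a_j\bmod\Lambda$ as $\boldsymbol q$ ranges over the integer tuples with $0\le q_j\le R-1$, where $R$ is chosen as large as possible subject to $R-1<Q^{N/M}$, so that the number $R^M$ of tuples has exact size $Q^N$, comparable to the number $L^N\approx Q^N$ of sub-parallelotopes. A collision of two distinct tuples $\boldsymbol q,\boldsymbol q'$ in a common sub-parallelotope then produces $\boldsymbol q-\boldsymbol q'\ne 0$ with $\max_j|q_j-q_j'|\le R-1<Q^{N/M}$ and, by the previous step, a lattice point $\lambdaup$ with $\sum_j(q_j-q_j')\boldsymbol a_j-\lambdaup\in\tfrac2L\varPi\subseteq 2Q^{-1}\varPi$; relabelling $q_j:=q_j-q_j'$ gives the conclusion. (The same output follows from Minkowski's theorem for the $(M+N)$-dimensional lattice $\{(\boldsymbol q,\sum_j q_j\boldsymbol a_j-\lambdaup):\boldsymbol q\in\BZ^M,\ \lambdaup\in\Lambda\}$ and the symmetric convex body $\{\max_j|x_j|\le Q^{N/M}\}\times 2Q^{-1}\varPi$, whose volume is precisely $2^{M+N}$ times the covolume of that lattice.)

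The only genuinely delicate step — and what I expect to be the main obstacle — is the constant bookkeeping at the end: one needs $\max_j|q_j|$ to be \emph{strictly} below $Q^{N/M}$ while $R^M$ \emph{strictly} exceeds $L^N$ (in the Minkowski formulation: upgrading the non-strict conclusion for a body of critical volume to a strict one). Since $R^M$ and $L^N$ both have size exactly $Q^N$, this is tight, and it is made to work by choosing the ranges of the $q_j$ and the mesh $L$ with care and by exploiting the factor $2$ in $2Q^{-1}\varPi$ — the same room that in the classical one-variable case $N=M=1$ lets the denominator stay below $Q$. Everything else — the reduction modulo $\Lambda$, the index computation $[\tfrac1L\Lambda:\Lambda]=L^N$, the inclusion $\tfrac2L\varPi\subseteq 2Q^{-1}\varPi$ — is entirely routine.
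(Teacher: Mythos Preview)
The paper does not actually prove this proposition: it is simply stated in the appendix with the parenthetical reference ``(compare \cite[Theorem 1E]{Schmidt-DA})'' and then used to derive Lemma \ref{lem: Dirichlet}. Schmidt's Theorem 1E is the classical simultaneous Dirichlet approximation for the standard lattice $\BZ^N$; the version here with a general lattice $\Lambda$ and its fundamental parallelotope $\varPi$ follows from Schmidt's statement by the linear change of basis sending $\BZ^N$ to $\Lambda$.

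Your argument is precisely the standard proof of that theorem --- the pigeonhole principle on $L^N$ sub-parallelotopes, or equivalently Minkowski's first theorem applied to the lattice $\{(\boldsymbol q,\sum_j q_j\boldsymbol a_j-\lambdaup)\}$ in $\BR^{M+N}$ --- and it is correct. Your identification of the only delicate point, namely arranging the \emph{strict} inequality $\max_j|q_j|<Q^{N/M}$ while keeping the pigeonhole count favourable, is accurate; the standard resolution (which you could just state outright) is either to invoke the closed-body version of Minkowski's theorem at the critical volume $2^{M+N}\mathrm{covol}$, or to apply the non-strict conclusion with $Q$ replaced by $Q-\varepsilon$ and let $\varepsilon\to 0$, using that the set of admissible $\boldsymbol q$ is finite. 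The case $\boldsymbol q=0$ is excluded because for $Q>2$ the only lattice point of $\Lambda$ in $2Q^{-1}\varPi\subset\varPi$ is the origin.
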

		As a consequence, we have the following lemma.
		\begin{lem}\label{lem: Dirichlet}
			Let $\theta \in F_{ \infty}$, and let $Q > 1$ be real. Then there exist $\alpha, \beta \in \frO$, with $\beta \neq 0$, such that
			\begin{align}
			\label{5eq: Dirichlet, 1}   | \beta |_{\vv} \lll Q , \hskip 10 pt |\beta_{\vv}   \theta_{\vv}  - \alpha_{\vv}  |_{\vv}  \lll \frac 1 {Q } ,  
			\end{align} 
			for all $\vv \in S_{\infty}$, where the implied constants depend only on the field $F$.
		\end{lem}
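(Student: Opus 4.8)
The plan is to deduce Lemma~\ref{lem: Dirichlet} from Proposition~\ref{3thm: Dirichlet} by transporting the problem to $\BR^N$ through the archimedean embedding $\sigma : F \hookrightarrow F_{\infty} = \BR^N$. Fix once and for all a $\BZ$-basis $\omega_1, \dots, \omega_N$ of $\frO$; then $\sigma(\frO)$ is a full-rank lattice $\Lambda$ in $F_{\infty}$, and a closed fundamental parallelotope $\varPi$ for $F_{\infty}/\frO$ serves as a closed fundamental parallelotope for $\Lambda$. The key point is that multiplication by $\theta$ is an $\BR$-linear endomorphism of $F_{\infty}$, so that if one writes a candidate denominator as $\beta = \sum_{j=1}^{N} q_j \omega_j$ with $q_j \in \BZ$, the $k$-th real coordinate of $\beta\theta \in F_{\infty} = \BR^N$ equals $\sum_{j=1}^{N} q_j a_{jk}$, where $a_{jk}$ denotes the $k$-th coordinate of $\omega_j \theta \in \BR^N$.

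First I would apply Proposition~\ref{3thm: Dirichlet} with $M = N$, the lattice $\Lambda = \sigma(\frO)$, the parallelotope $\varPi$, the real numbers $a_{jk}$ just defined, and the given $Q > 1$. This produces integers $q_1, \dots, q_N$ with $1 \leqslant \max_j |q_j| < Q^{N/N} = Q$ and a lattice point $\lambdaup \in \Lambda$, necessarily of the form $\lambdaup = \sigma(\alpha)$ with $\alpha \in \frO$, such that the vector $\beta\theta - \alpha \in F_{\infty} = \BR^N$ lies in $2 Q^{-1} \cdot \varPi$, where $\beta := \sum_{j=1}^{N} q_j \omega_j \in \frO$. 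Since at least one $q_j$ is nonzero and $\omega_1, \dots, \omega_N$ are $\BQ$-linearly independent, $\beta \neq 0$.

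It then remains only to read off the two estimates in \eqref{5eq: Dirichlet, 1}. As $\varPi$ is a fixed bounded subset of $\BR^N$, the containment $\beta\theta - \alpha \in 2 Q^{-1}\varPi$ forces $|\beta_{\vv} \theta_{\vv} - \alpha_{\vv}|_{\vv} \lll 1/Q$ for every $\vv \in S_{\infty}$, with an implied constant depending only on $\varPi$, hence only on $F$. Similarly, $|\beta_{\vv}|_{\vv} = \big| \sum_{j=1}^{N} q_j \sigma_{\vv}(\omega_j) \big| \leqslant \big( \max_j |q_j| \big) \sum_{j=1}^{N} |\sigma_{\vv}(\omega_j)| \lll Q$, the implied constant depending only on the fixed integral basis, hence only on $F$. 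This gives the lemma.

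No step here is genuinely hard: all the arithmetic content sits in Proposition~\ref{3thm: Dirichlet}, and the argument above is bookkeeping. The one place requiring a little care is the identification of the data fed into that proposition — that the unknowns $q_j$ are precisely the integral coordinates of $\beta$ in the chosen basis, that the approximating lattice point is exactly $\sigma(\alpha)$ for an integer $\alpha$, and that taking $M = N$ collapses the exponent $Q^{N/M}$ to the $Q$ we want — together with the observation that all implied constants are then controlled by the fixed choices of $\omega_1, \dots, \omega_N$ and of $\varPi$, both of which depend only on $F$.
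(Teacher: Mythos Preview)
Your proof is correct and follows essentially the same route as the paper: fix an integral basis of $\frO$, apply Proposition~\ref{3thm: Dirichlet} with $M=N$, $\Lambda=\sigma(\frO)$, and $a_{jk}$ the real coordinates of $\omega_j\theta$, then read off $\beta=\sum_j q_j\omega_j$ and $\alpha$ from the lattice point. Your write-up is in fact slightly cleaner than the paper's, which has a few index typos, and you are more explicit about why $\beta\neq 0$ and why the implied constants depend only on $F$.
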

		
		\begin{proof}
			In the notations of Proposition \ref{3thm: Dirichlet}, let $\varLambda =  \frO^\sigma$, for a fixed integer basis $\beta_1, ..., \beta_N$ for $\frO$, let $\varPi = \left\{  \sum_{j=1 }^N t_{j} \beta_{j}^{\, \sigma}   : t_{j} \in \left[- \frac 1 2, \frac 1 2  \right]  \right\}$, and let $a_{j\, \vv} = \theta_{\vv} \, \beta_{j \, \vv}$. Recall that $\sigma  $ is the canonical embedding $F \hookrightarrow F_{\infty} = \BR^N$. Note that $M = N$. Then Proposition \ref{3thm: Dirichlet} implies the existence of integers $q_{\vv}$ and $\alpha \in \frO$ such that 
			\begin{align*}
			& 1  \leqslant  \max \left\{ |q_\vv|  \right\} < Q , \\
			(\theta_\vv \sum_{ j = 1}^N  q_j & \beta_{j\, \vv}  - \alpha_{\vv})_{\vv\,\in S_{\infty}} \in 2 Q\- \cdot \varPi.
			\end{align*}
			Therefore follows the lemma by setting $\beta  = \sum_{ j = 1}^N  q_j  \beta_{j }$.
		\end{proof}

	\bibliographystyle{alphanum}
	\bibliography{references}

\end{document}